\documentclass[oneside,reqno,english]{amsart}
\usepackage[T1]{fontenc}
\usepackage[utf8]{inputenc}
\setcounter{tocdepth}{1}
\usepackage{xcolor}
\usepackage{babel}
\usepackage{prettyref}
\usepackage{amstext}
\usepackage{amsthm}
\usepackage{amssymb}
\usepackage[pdfusetitle,
 bookmarks=true,bookmarksnumbered=false,bookmarksopen=false,
 breaklinks=false,pdfborder={0 0 0},pdfborderstyle={},backref=false,colorlinks=false]
 {hyperref}
\hypersetup{
 colorlinks=true,citecolor=blue,linkcolor=blue,linktocpage=true}

\makeatletter

\newcommand*\LyXZeroWidthSpace{\hspace{0pt}}

\numberwithin{equation}{section}
\numberwithin{figure}{section}

\usepackage{prettyref}

\newrefformat{cor}{Corollary~\ref{#1}}
\newrefformat{subsec}{Section~\ref{#1}}
\newrefformat{lem}{Lemma~\ref{#1}}
\newrefformat{thm}{Theorem~\ref{#1}}
\newrefformat{sec}{Section~\ref{#1}}
\newrefformat{chap}{Chapter~\ref{#1}}
\newrefformat{prop}{Proposition~\ref{#1}}
\newrefformat{exa}{Example~\ref{#1}}
\newrefformat{tab}{Table~\ref{#1}}
\newrefformat{rem}{Remark~\ref{#1}}
\newrefformat{def}{Definition~\ref{#1}}
\newrefformat{fig}{Figure~\ref{#1}}
\newrefformat{claim}{Claim~\ref{#1}}
\newrefformat{assu}{Assumption~\ref{#1}}

\makeatother

\theoremstyle{plain}
\newtheorem{thm}{\protect\theoremname}[section]
\newtheorem{prop}[thm]{\protect\propositionname}
\theoremstyle{definition}
\newtheorem{defn}[thm]{\protect\definitionname}
\newtheorem{example}[thm]{\protect\examplename}
\theoremstyle{remark}
\newtheorem*{rem*}{\protect\remarkname}
\theoremstyle{plain}
\newtheorem{lem}[thm]{\protect\lemmaname}
\newtheorem{cor}[thm]{\protect\corollaryname}
\theoremstyle{remark}
\newtheorem{rem}[thm]{\protect\remarkname}
\theoremstyle{plain}
\newtheorem{assumption}[thm]{\protect\assumptionname}
\newtheorem*{lyxalgorithm*}{\protect\algorithmname}
\newtheorem{lyxalgorithm}[thm]{\protect\algorithmname}
\providecommand{\algorithmname}{Algorithm}
\providecommand{\assumptionname}{Assumption}
\providecommand{\corollaryname}{Corollary}
\providecommand{\definitionname}{Definition}
\providecommand{\examplename}{Example}
\providecommand{\lemmaname}{Lemma}
\providecommand{\propositionname}{Proposition}
\providecommand{\remarkname}{Remark}
\providecommand{\theoremname}{Theorem}

\begin{document}
\subjclass[2020]{Primary 47N70; Secondary 37N40, 37L55, 49Q15, 46E22}
\title[Telescoping Residuals in Operator Products and Kernels]{Use of operator defect identities in multi-channel signal plus residual-analysis
via iterated products and telescoping energy-residuals: Applications
to kernels in machine learning}
\begin{abstract}
We present a new operator theoretic framework for analysis of complex
systems with intrinsic subdivisions into components, taking the form
of “residuals” in general, and \textquotedbl telescoping energy residuals\textquotedbl{}
in particular.  We prove new results which yield admissibility/effectiveness,
and new a priori bounds on energy residuals.  Applications include
infinite-dimensional Kaczmarz theory for $\lambda_{n}$-relaxed variants,
and $\lambda_{n}$-effectiveness. And we give applications of our
framework to generalized machine learning algorithms, greedy Kernel
Principal Component Analysis (KPCA), proving explicit convergence
results, residual energy decomposition, and criteria for stability
under noise.
\end{abstract}

\author{Palle E.T. Jorgensen}
\address{(Palle E.T. Jorgensen) Department of Mathematics, The University of
Iowa, Iowa City, IA 52242-1419, U.S.A.}
\email{palle-jorgensen@uiowa.edu}
\author{Myung-Sin Song}
\address{(Myung-Sin Song) Department of Mathematics and Statistics, Southern
Illinois University Edwardsville, Edwardsville, IL 62026, USA}
\email{msong@siue.edu}
\author{James Tian}
\address{(James Tian) Mathematical Reviews, 535 W. William St, Suite 210, Ann
Arbor, MI 48103, USA}
\email{james.ftian@gmail.com}
\keywords{operator theory, reproducing kernel Hilbert space, Gram matrices,
machine learning, algorithms, kernel reparametrization, optimization,
data-adaptive directions, energy residual, systems theory, intrinsic
subdivisions, infinite-dimensional Kaczmarz, multichannel tree splitting,
Principal Component Analysis, greedy refinement and kernel compression,
stability, noise, stochastic sampling, kernel interpolation.}

\maketitle
\tableofcontents{}

\section{Introduction}\label{sec:1}

Recent papers dealing with recursive algorithms build on use of limits
of finite operator products (where the factors are projections or
more general operators), and with a single parameter $\lambda$ serving
to quantify regret estimates. Motivated by applications, we turn here
to a wider setting. In particular, rather than the use a single value
of $\lambda$, we identify here a wider framework which makes use
instead of a sequence of numbers $\lambda_{n}$, one for each operator
factor in the recursive construction. We further merge this new setting
into a framework kernel analysis, as used in new directions in machine
learning (ML). And we add two new applications to ML.

The main results in our paper are the following two: \prettyref{thm:2-4}
which gives a new telescoping identity, and a summability criterion,
both in a new operator theoretic framework. And \prettyref{thm:3-1},
based on kernel analysis, where we prove an exact energy balance for
the $\lambda_{n}$ dynamics, as well as an explicit error bound on
the summation of the squared residuals at sample points, serving to
guarantee effectiveness.

The paper is organized as follows. We introduce the operator-theoretic
framework of multi-channel defect splitting and establish a fundamental
telescoping identity for products of operators in \prettyref{sec:2}.
Exact residual energy decompositions, summability criteria, and convergence
results that form the foundation for the subsequent analysis are discussed.

We transport the framework from \prettyref{sec:2} in \prettyref{sec:3}
to RKHSs and discuss kernel interpolation and related online learning
dynamics. We prove an exact telescoping energy balance, yielding explicit
convergence guarantees and effectiveness criteria expressed in terms
of rank-one projections associated with sampling points.

\prettyref{sec:4} extends the deterministic framework to stochastic
settings, including random sampling and dropout mechanisms. Stability
and robustness under noise are analyzed, and quantitative bias--variance
tradeoffs are derived under mild step-size conditions.

We then develop a tree-based reparametrization of kernels using the
multichannel splitting structure in \prettyref{sec:5}. This leads
to multiscale feature representations and truncated kernels. The approximation
error is measured exactly by discarded residual energy, with sharp
bounds at both the kernel and Gram-matrix levels.

Building on this framework, \prettyref{sec:6} introduces a greedy
refinement strategy for kernel approximation and compression. Here,
kernel components are selected adaptively based on empirical energy
contributions. Thus, resulting in monotone convergence to the original
kernel and providing practical stopping criteria.

Lastly, \prettyref{sec:7} connects the proposed framework to greedy
kernel principal component analysis, highlighting implications, and
the advantages of the telescoping residual framework---exact energy
decomposition, convergence guarantees, and noise stability.

\section{Multi-channel defect splitting}\label{sec:2}

Our aim is to present a mathematical framework which arises in a variety
of applied settings involving complex systems subdivided into components,
here referred to as ``residuals'' or differences in energy at different
levels. In the technical literature, the term \textquotedbl telescoping
energy residuals\textquotedbl{} refers to a concept used in engineering
contexts, in mathematical physics, structural engineering, and in
computational fluid dynamics (CFD). In these diverse applications,
it typically describes a method where large systems are broken down
into components such that the ``residuals,'' or differences in energy
levels, can be summed via an algorithmic, often \textquotedbl telescoping\textquotedbl{}
manner. A ``residual'' typically reflects a difference between an
observed, or actual value, and a value predicted by the model. Specifically,
``energy residuals'' refer to remaining energy in a system or simulation,
which is unaccounted-for.

We begin with a mechanism common to a broad class of iterative splitting
algorithms: the defect identity associated with a contraction. This
identity yields a canonical two-channel ``signal plus residual''
decomposition at each step, and when iterated it produces a telescoping
energy-residual formula. Classical Kaczmarz-type methods, as well
as $\lambda$-relaxed projection schemes, fit naturally into this
contraction framework. In those settings one typically imposes an
admissibility/effectiveness hypothesis to ensure that the residual
channel vanishes asymptotically (or equivalently, that the energy
residual converges to the correct limit); see, for example, \cite{MR1881441,MR4126821}
for infinite-dimensional Kaczmarz theory and \cite{jeong2025} for
$\lambda$-relaxed variants and $\lambda$-effectiveness.

For special cases of related uses of infinite operator products in
harmonic analysis, effective systems, and in generalized machine learning
algorithms, we wish to call attention to the following recent papers
\cite{MR4941845,MR4957888,MR4887444,MR4867336,MR4835160,MR4676388,MR4619155}. 

\subsection{Defect decomposition}

For our present setting it will be convenient for us to work in the
general context of Hilbert space, denoted $H$, here given with specified
norm and inner product. The bounded linear operators in $H$ will
be denoted $B(H)$. But for estimation in algorithms, to be discussed
throughout the paper, it will be useful for us to specialize to the
case of contractive operators. In \prettyref{sec:3} below we shall
specialize to the case when $H$ is taken to be a reproducing kernel
Hilbert space, RKHS.

Fix a contraction $A\in B\left(H\right)$ and define its defect operator
\[
D_{A}=\left(I-A^{*}A\right)^{1/2}\in B\left(H\right)_{+}.
\]
Consider the map 
\[
W_{A}:H\to H\oplus H,\qquad W_{A}x=\left(Ax,D_{A}x\right).
\]
Then $W_{A}$ is an isometry. Indeed, 
\begin{align*}
\left\Vert W_{A}x\right\Vert ^{2} & =\left\Vert Ax\right\Vert ^{2}+\left\Vert D_{A}x\right\Vert ^{2}\\
 & =\left\langle x,A^{*}Ax\right\rangle +\left\langle x,\left(I-A^{*}A\right)x\right\rangle =\left\Vert x\right\Vert ^{2}.
\end{align*}
Equivalently, 
\[
\left\Vert x\right\Vert ^{2}=\left\Vert Ax\right\Vert ^{2}+\left\Vert D_{A}x\right\Vert ^{2},\qquad x\in H,
\]
and in operator form, 
\[
I=A^{*}A+D^{2}_{A}.
\]

Iterating this identity is straightforward, but it is the cleanest
way to see the origin of the standard ``energy residual'' and its
associated residual channel.
\begin{prop}
\label{prop:2-1} Let $A_{1},A_{2},\dots$ be contractions on $H$.
Set $T_{0}=I$ and $T_{n}=A_{n}A_{n-1}\cdots A_{1}$ for $n\ge1$.
Then for every $x\in H$ and every $N\ge1$, 
\[
\left\Vert x\right\Vert ^{2}=\left\Vert T_{N}x\right\Vert ^{2}+\sum^{N}_{n=1}\left\Vert D_{A_{n}}T_{n-1}x\right\Vert ^{2}.
\]
Equivalently, for every $N\ge1$, 
\[
I-T^{*}_{N}T_{N}=\sum^{N}_{n=1}T^{*}_{n-1}D^{2}_{A_{n}}T_{n-1},
\]
where the sum is finite and hence unambiguous in $B\left(H\right)$. 
\end{prop}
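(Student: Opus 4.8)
The plan is to reduce everything to the single-step defect identity $\|x\|^{2}=\|Ax\|^{2}+\|D_{A}x\|^{2}$ established above, and then exploit the recursion $T_{n}=A_{n}T_{n-1}$ to telescope. First I would note that each partial product $T_{n}$ is again a contraction, being a composition of contractions; this is the only place where contractivity of the $T_{n}$ enters, and it is immediate. Consequently the defect identity may be applied verbatim to the vector $y=T_{n-1}x$ with the contraction $A_{n}$, which gives, for each $1\le n\le N$,
\[
\left\Vert T_{n-1}x\right\Vert ^{2}=\left\Vert A_{n}T_{n-1}x\right\Vert ^{2}+\left\Vert D_{A_{n}}T_{n-1}x\right\Vert ^{2}=\left\Vert T_{n}x\right\Vert ^{2}+\left\Vert D_{A_{n}}T_{n-1}x\right\Vert ^{2},
\]
using $A_{n}T_{n-1}=T_{n}$. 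Equivalently, $\left\Vert D_{A_{n}}T_{n-1}x\right\Vert ^{2}=\left\Vert T_{n-1}x\right\Vert ^{2}-\left\Vert T_{n}x\right\Vert ^{2}$.

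Next I would sum this last identity over $n=1,\dots,N$. The right-hand side telescopes:
\[
\sum_{n=1}^{N}\left(\left\Vert T_{n-1}x\right\Vert ^{2}-\left\Vert T_{n}x\right\Vert ^{2}\right)=\left\Vert T_{0}x\right\Vert ^{2}-\left\Vert T_{N}x\right\Vert ^{2}=\left\Vert x\right\Vert ^{2}-\left\Vert T_{N}x\right\Vert ^{2},
\]
since $T_{0}=I$. This yields the first claimed identity. (Equivalently, one obtains the same conclusion by a one-line induction on $N$: the base case $N=1$ is precisely the defect identity, and the inductive step is the substitution just displayed.) For the operator form, I would rewrite each term as a quadratic form: since $D_{A_{n}}=D_{A_{n}}^{*}$ we have $\left\Vert D_{A_{n}}T_{n-1}x\right\Vert ^{2}=\left\langle x,T_{n-1}^{*}D_{A_{n}}^{2}T_{n-1}x\right\rangle$, while $\left\Vert T_{N}x\right\Vert ^{2}=\left\langle x,T_{N}^{*}T_{N}x\right\rangle$ and $\left\Vert x\right\Vert ^{2}=\left\langle x,x\right\rangle$. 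Hence $\left\langle x,\bigl(I-T_{N}^{*}T_{N}\bigr)x\right\rangle =\sum_{n=1}^{N}\left\langle x,T_{n-1}^{*}D_{A_{n}}^{2}T_{n-1}x\right\rangle$ for all $x\in H$; as both sides are quadratic forms of bounded self-adjoint operators and the sum is finite (hence unambiguous in $B(H)$), polarization gives $I-T_{N}^{*}T_{N}=\sum_{n=1}^{N}T_{n-1}^{*}D_{A_{n}}^{2}T_{n-1}$.

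I do not expect a genuine obstacle here; the argument is essentially the bookkeeping of a telescoping sum built on the scalar defect identity. The two small points that deserve care are (i) verifying that the partial products $T_{n}$ remain contractions, so that the defect identity is legitimately invoked at the vector $T_{n-1}x$, and (ii) using the self-adjointness of the defect operators $D_{A_{n}}$ when passing from the scalar identity to the operator identity. Both are routine, so the proof is short.
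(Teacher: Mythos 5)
Your proof is correct and follows essentially the same route as the paper: apply the single-step defect identity $\|y\|^{2}=\|A_{n}y\|^{2}+\|D_{A_{n}}y\|^{2}$ to $y=T_{n-1}x$, telescope over $n=1,\dots,N$, and pass to the operator identity by polarization. (The remark that the $T_{n}$ are contractions is harmless but not actually needed; the defect identity for $A_{n}$ applies to any vector, so only contractivity of each individual $A_{n}$ enters.)
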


\begin{proof}
Apply $\left\Vert y\right\Vert ^{2}=\left\Vert A_{n}y\right\Vert ^{2}+\left\Vert D_{A_{n}}y\right\Vert ^{2}$
with $y=T_{n-1}x$, and sum from $n=1$ to $N$. Polarization gives
the operator identity. 
\end{proof}

\begin{defn}[Effectiveness/admissibility]
 \label{def:2-2}Let $A_{1},A_{2},\dots$ be contractions on $H$
and let 
\[
T_{n}=A_{n}A_{n-1}\cdots A_{1}.
\]
We say that the sequence $\left\{ A_{n}\right\} $ is effective (or
admissible) if $\left\{ T_{n}\right\} $ converges in the strong operator
topology ($T_{n}\xrightarrow{s}T_{\infty}$), i.e., if there exists
$T_{\infty}\in B\left(H\right)$ such that $T_{n}x\to T_{\infty}x$
for every $x\in H$.
\end{defn}

\subsection{Kaczmarz-$\lambda P$ schemes}\label{subsec:2-2}

We now specialize the general contraction telescope of \prettyref{prop:2-1}
to the projection-driven setting that underlies Kaczmarz-type algorithms.
The basic building blocks are contractions of the form $I-\lambda P$
with $P$ an orthogonal projection and $0<\lambda<2$. Iterating such
factors produces products $\widetilde{T}_{n}$ whose action on vectors
encodes the algorithm, while the associated energy residuals $\widetilde{T}^{*}_{n}\widetilde{T}_{n}$
satisfy an automatic monotone decay governed by \prettyref{prop:2-1}.
The central issue is then whether this energy control upgrades to
convergence of the products themselves, i.e., effectiveness in the
sense of \prettyref{def:2-2}. The next example gives the basic defect
identity and the resulting telescoping formula in the $\lambda$-relaxed
projection case. 
\begin{example}[\cite{jeong2025}]
\label{exa:2-3} Let $P=P^{*}=P^{2}$ be an orthogonal projection
and fix $0<\lambda<2$. Define $A=I-\lambda P$. Then $A$ is a contraction
since 
\[
A^{*}A=A^{2}=\left(I-\lambda P\right)^{2}=I-\lambda\left(2-\lambda\right)P\le I.
\]
Consequently, 
\[
D^{2}_{A}=I-A^{*}A=\lambda\left(2-\lambda\right)P,\qquad D_{A}=\sqrt{\lambda\left(2-\lambda\right)}P,
\]
and the defect identity becomes 
\[
\left\Vert x\right\Vert ^{2}=\left\Vert \left(I-\lambda P\right)x\right\Vert ^{2}+\lambda\left(2-\lambda\right)\left\Vert Px\right\Vert ^{2}.
\]

Now take a sequence of orthogonal projections $P_{1},P_{2},\dots$
and form the $\lambda$-relaxed product 
\[
\widetilde{T}_{0}=I,\qquad\widetilde{T}_{n}=\left(I-\lambda P_{n}\right)\cdots\left(I-\lambda P_{1}\right).
\]
With the corresponding normalization 
\[
\widetilde{Q}_{n}:=\lambda P_{n}\widetilde{T}_{n-1},
\]
\prettyref{prop:2-1} yields the telescoping identity 
\[
I-\widetilde{T}^{*}_{N}\widetilde{T}_{N}=\lambda\left(2-\lambda\right)\sum^{N}_{n=1}\widetilde{T}^{*}_{n-1}P_{n}\widetilde{T}_{n-1}=\frac{2-\lambda}{\lambda}\sum^{N}_{n=1}\widetilde{Q}^{*}_{n}\widetilde{Q}_{n}.
\]

In this contraction-driven setting, a central convergence question
is whether the $\lambda$-relaxed products $\widetilde{T}_{n}$ are
effective in the sense of \prettyref{def:2-2}.  By contrast, the
associated energy residuals $\widetilde{T}^{*}_{n}\widetilde{T}_{n}$
always converge strongly: since each factor $I-\lambda P_{n}$ is
a contraction, one has 
\[
0\le\widetilde{T}^{*}_{n+1}\widetilde{T}_{n+1}=\widetilde{T}^{*}_{n}\left(I-\lambda P_{n+1}\right)^{*}\left(I-\lambda P_{n+1}\right)\widetilde{T}_{n}\le\widetilde{T}^{*}_{n}\widetilde{T}_{n},
\]
hence $\widetilde{T}^{*}_{n}\widetilde{T}_{n}\downarrow S_{\infty}$
for some $S_{\infty}\in B(H)_{+}$. The telescoping identity above
refines this monotonicity by identifying the dissipation at each step:
\[
\left\Vert x\right\Vert ^{2}-\Vert\widetilde{T}_{N}x\Vert^{2}=\frac{2-\lambda}{\lambda}\sum^{N}_{n=1}\Vert\widetilde{Q}_{n}x\Vert^{2}.
\]
The admissibility/effectiveness hypotheses in the cited works are
additional structural conditions on the projection sequence (and on
$\lambda$) that upgrade this energy control to strong convergence
statements for the products $\widetilde{T}_{n}$ themselves; see \cite{MR1881441,MR4126821,jeong2025}. 
\end{example}

Allowing a relaxation schedule $\{\lambda_{n}\}$ leads to a variable-step
$\lambda_{n}P_{n}$ scheme; the next theorem gives the corresponding
telescoping identity and a summability criterion guaranteeing effectiveness.

\subsection{Telescoping, summability criteria, and effectiveness}
\begin{thm}
\label{thm:2-4} Let $P_{1},P_{2},\dots$ be orthogonal projections
on $H$, and let $0<\lambda_{n}<2$ be a sequence. Set 
\[
A_{n}:=I-\lambda_{n}P_{n},\qquad T_{0}=I,\qquad T_{n}:=A_{n}A_{n-1}\cdots A_{1}.
\]
Then each $A_{n}$ is a contraction and one has the telescoping identity
\begin{equation}
I-T^{*}_{N}T_{N}=\sum^{N}_{n=1}\lambda_{n}\left(2-\lambda_{n}\right)\,T^{*}_{n-1}P_{n}T_{n-1}=\sum^{N}_{n=1}Q^{*}_{n}Q_{n},\label{eq:2-1}
\end{equation}
where 
\[
Q_{n}:=\sqrt{\lambda_{n}\left(2-\lambda_{n}\right)}\,P_{n}T_{n-1}.
\]
In particular, for every $x\in H$ and every $N\ge1$, 
\begin{equation}
\left\Vert x\right\Vert ^{2}-\left\Vert T_{N}x\right\Vert ^{2}=\sum^{N}_{n=1}\lambda_{n}\left(2-\lambda_{n}\right)\left\Vert P_{n}T_{n-1}x\right\Vert ^{2}=\sum^{N}_{n=1}\left\Vert Q_{n}x\right\Vert ^{2}.\label{eq:2-2}
\end{equation}

Assume in addition that 
\begin{equation}
\sum^{\infty}_{n=1}\frac{\lambda_{n}}{2-\lambda_{n}}<\infty.\label{eq:2-3}
\end{equation}
Then the products $\{T_{n}\}$ are effective in the sense of \prettyref{def:2-2},
i.e., $T_{n}\xrightarrow{s}T_{\infty}$ for some $T_{\infty}\in B(H)$.
\end{thm}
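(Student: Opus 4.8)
The plan is to obtain the two telescoping identities \eqref{eq:2-1}--\eqref{eq:2-2} for free from \prettyref{prop:2-1}, and to spend the actual effort on the effectiveness claim under the summability hypothesis \eqref{eq:2-3}.

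For the identities: as already recorded in \prettyref{exa:2-3}, each factor $A_{n}=I-\lambda_{n}P_{n}$ is a contraction, with $A_{n}^{*}A_{n}=I-\lambda_{n}(2-\lambda_{n})P_{n}$ and hence $D_{A_{n}}^{2}=\lambda_{n}(2-\lambda_{n})P_{n}$. Substituting this into the operator identity of \prettyref{prop:2-1} gives
\[
I-T_{N}^{*}T_{N}=\sum_{n=1}^{N}T_{n-1}^{*}D_{A_{n}}^{2}T_{n-1}=\sum_{n=1}^{N}\lambda_{n}(2-\lambda_{n})\,T_{n-1}^{*}P_{n}T_{n-1},
\]
and since $P_{n}=P_{n}^{*}=P_{n}^{2}$ the $n$-th summand equals $Q_{n}^{*}Q_{n}$ with $Q_{n}=\sqrt{\lambda_{n}(2-\lambda_{n})}\,P_{n}T_{n-1}$; this is \eqref{eq:2-1}, and evaluating both sides on a vector $x$ yields \eqref{eq:2-2}. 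All sums here are finite, so no convergence question arises at this stage.

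For effectiveness, the strategy is to prove that $\{T_{n}x\}$ is a Cauchy sequence in $H$ for every fixed $x$, by showing that the telescoping increments are absolutely summable in norm. Writing $T_{n}-T_{n-1}=(A_{n}-I)T_{n-1}=-\lambda_{n}P_{n}T_{n-1}$, we have $\|T_{n}x-T_{n-1}x\|=\lambda_{n}\|P_{n}T_{n-1}x\|$. The decisive step is the elementary factorization $\lambda_{n}=\sqrt{\lambda_{n}(2-\lambda_{n})}\cdot\sqrt{\lambda_{n}/(2-\lambda_{n})}$, which, via Cauchy--Schwarz, gives
\[
\sum_{n=1}^{\infty}\lambda_{n}\|P_{n}T_{n-1}x\|\le\Big(\sum_{n=1}^{\infty}\lambda_{n}(2-\lambda_{n})\|P_{n}T_{n-1}x\|^{2}\Big)^{1/2}\Big(\sum_{n=1}^{\infty}\tfrac{\lambda_{n}}{2-\lambda_{n}}\Big)^{1/2}.
\]
By \eqref{eq:2-2} the first factor on the right is at most $\|x\|$ (the partial sums $\|x\|^{2}-\|T_{N}x\|^{2}$ are nondecreasing and bounded by $\|x\|^{2}$), and the second factor is finite precisely by \eqref{eq:2-3}. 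Hence $\sum_{n}\|T_{n}x-T_{n-1}x\|<\infty$, so $T_{n}x$ converges in $H$; set $T_{\infty}x:=\lim_{n}T_{n}x$. Linearity passes to the limit, and since each $T_{n}$ is a product of contractions one has $\|T_{n}x\|\le\|x\|$, hence $\|T_{\infty}x\|\le\|x\|$, so $T_{\infty}\in B(H)$. This is exactly $T_{n}\xrightarrow{s}T_{\infty}$, i.e.\ effectiveness in the sense of \prettyref{def:2-2}. (The same estimate incidentally bounds the total displacement, $\|T_{\infty}x-x\|\le\|x\|\big(\sum_{n}\lambda_{n}/(2-\lambda_{n})\big)^{1/2}$.)

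The one conceptual point to stress --- the obstacle the hypothesis is designed to overcome --- is that the energy residuals $T_{n}^{*}T_{n}$ always decrease monotonically and so converge strongly automatically (as noted in \prettyref{exa:2-3}), yet this by itself says nothing about convergence of the products $T_{n}$; bridging that gap is the admissibility/effectiveness problem. Condition \eqref{eq:2-3} closes it by upgrading the $\ell^{2}$-type energy control on the increments $\lambda_{n}P_{n}T_{n-1}$ to an $\ell^{1}$ norm bound. Note also that \eqref{eq:2-3} forces $\lambda_{n}\to0$, so this is necessarily a diminishing-relaxation regime, in contrast to the fixed-$\lambda$ schemes of \prettyref{exa:2-3}, where effectiveness requires genuinely different structural arguments.
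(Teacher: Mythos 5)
Your proposal is correct and follows essentially the same route as the paper: derive the identities by substituting $D_{A_{n}}^{2}=\lambda_{n}(2-\lambda_{n})P_{n}$ into \prettyref{prop:2-1}, then prove effectiveness by the Cauchy--Schwarz splitting $\lambda_{n}=\sqrt{\lambda_{n}(2-\lambda_{n})}\sqrt{\lambda_{n}/(2-\lambda_{n})}$ applied to the increments $T_{n}x-T_{n-1}x=-\lambda_{n}P_{n}T_{n-1}x$, bounding the energy factor by \eqref{eq:2-2}. Your added remarks (boundedness of $T_{\infty}$ via contractivity, the displacement bound) are harmless refinements of the same argument.
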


\begin{proof}
Since $P^{2}_{n}=P_{n}$ and $P^{*}_{n}=P_{n}$, one has 
\[
A^{*}_{n}A_{n}=A^{2}_{n}=\left(I-\lambda_{n}P_{n}\right)^{2}=I-\lambda_{n}\left(2-\lambda_{n}\right)P_{n}\le I,
\]
so $A_{n}$ is a contraction and $D^{2}_{A_{n}}=I-A^{*}_{n}A_{n}=\lambda_{n}(2-\lambda_{n})P_{n}$.
Applying \prettyref{prop:2-1} with these $A_{n}$ gives \eqref{eq:2-1}
and hence \eqref{eq:2-2}.

For effectiveness, fix $x\in H$ and write the step difference 
\[
T_{n}x-T_{n-1}x=\left(A_{n}-I\right)T_{n-1}x=-\lambda_{n}P_{n}T_{n-1}x.
\]
By Cauchy-Schwarz, 
\begin{align*}
\sum^{\infty}_{n=1}\left\Vert T_{n}x-T_{n-1}x\right\Vert  & =\sum^{\infty}_{n=1}\lambda_{n}\left\Vert P_{n}T_{n-1}x\right\Vert \\
 & =\sum^{\infty}_{n=1}\sqrt{\frac{\lambda_{n}}{2-\lambda_{n}}}\;\sqrt{\lambda_{n}\left(2-\lambda_{n}\right)}\left\Vert P_{n}T_{n-1}x\right\Vert \\
 & \le\left(\sum^{\infty}_{n=1}\frac{\lambda_{n}}{2-\lambda_{n}}\right)^{1/2}\left(\sum^{\infty}_{n=1}\lambda_{n}\left(2-\lambda_{n}\right)\left\Vert P_{n}T_{n-1}x\right\Vert ^{2}\right)^{1/2}.
\end{align*}
The second series is bounded by \eqref{eq:2-2}: 
\[
\sum^{\infty}_{n=1}\lambda_{n}\left(2-\lambda_{n}\right)\left\Vert P_{n}T_{n-1}x\right\Vert ^{2}\le\left\Vert x\right\Vert ^{2}.
\]
Thus \eqref{eq:2-3} implies $\sum_{n}\left\Vert T_{n}x-T_{n-1}x\right\Vert <\infty$,
so $\{T_{n}x\}$ is Cauchy in $H$. Since $x$ was arbitrary, $T_{n}\xrightarrow{s}T_{\infty}$. 
\end{proof}

The summability condition \eqref{eq:2-3} is a purely one-dimensional
constraint on the relaxation schedule $\{\lambda_{n}\}$; it gives
a projection-independent sufficient condition for effectiveness, while
finer criteria necessarily depend on the geometry of the projections. 
\begin{rem*}[Why variable relaxation can matter]
 In learning-style uses of Kaczmarz updates, the relaxation parameter
acts as a step size. In particular, the analysis in \cite{jeong2025}
exhibits an explicit $\lambda$-dependence in regret bounds, separating
a ``learning term'' from a ``noise floor,'' and motivates noise-aware
tuning of $\lambda$ across a run. Allowing $\lambda_{n}$ to vary
therefore accommodates standard step-size scheduling (e.g., aggressive
early updates followed by damping), while \eqref{eq:2-3} provides
a simple projection-independent sufficient condition ensuring effectiveness
of such schedules. 
\end{rem*}

\subsection{Multichannel tree splitting}\label{subsec:2-4}

Let $A_{1},\dots,A_{d}\in B\left(H\right)$ be bounded operators such
that 
\[
\sum^{d}_{i=1}A^{*}_{i}A_{i}\le I.
\]
Equivalently, $\left(A_{1},\dots,A_{d}\right)$ is a column contraction.
Set 
\[
D:=\left(I-\sum^{d}_{i=1}A^{*}_{i}A_{i}\right)^{1/2}.
\]
The basic observation is that this gives a multichannel splitting
of the norm.
\begin{lem}
\label{lem:2-1}For every $x\in H$, 
\[
\left\Vert x\right\Vert ^{2}=\sum^{d}_{i=1}\left\Vert A_{i}x\right\Vert ^{2}+\left\Vert Dx\right\Vert ^{2}.
\]
\end{lem}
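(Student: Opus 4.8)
The plan is to treat this as the $(d{+}1)$-channel analogue of the two-channel defect computation that produced the isometry $W_{A}$ earlier in the section. Concretely, one may define $W:H\to H^{\oplus d}\oplus H$ by $Wx=\left(A_{1}x,\dots,A_{d}x,Dx\right)$; the asserted identity is precisely the expansion of $\left\Vert Wx\right\Vert ^{2}$, so the lemma says exactly that $W$ is an isometry. No iteration or limiting argument is needed here — this is the single-step building block that will later be fed into a telescoping argument — so the proof is a direct inner-product computation.

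First I would observe that the column-contraction hypothesis $\sum_{i=1}^{d}A_{i}^{*}A_{i}\le I$ guarantees that $I-\sum_{i=1}^{d}A_{i}^{*}A_{i}$ is a positive operator in $B(H)_{+}$, so by the continuous functional calculus its positive square root $D$ exists, is self-adjoint, and satisfies $D^{2}=I-\sum_{i=1}^{d}A_{i}^{*}A_{i}$. Then for any $x\in H$ I would compute, using self-adjointness of $D$ and then the definition of $D^{2}$,
\[
\left\Vert Dx\right\Vert ^{2}=\left\langle Dx,Dx\right\rangle =\left\langle x,D^{2}x\right\rangle =\left\langle x,x\right\rangle -\sum_{i=1}^{d}\left\langle x,A_{i}^{*}A_{i}x\right\rangle =\left\Vert x\right\Vert ^{2}-\sum_{i=1}^{d}\left\Vert A_{i}x\right\Vert ^{2},
\]
where the last equality uses $\left\langle x,A_{i}^{*}A_{i}x\right\rangle =\left\langle A_{i}x,A_{i}x\right\rangle =\left\Vert A_{i}x\right\Vert ^{2}$. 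Rearranging the displayed chain yields the claimed identity.

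I do not expect any genuine obstacle: the only points requiring care are the well-definedness and self-adjointness of $D$ (both immediate from the hypothesis that $\left(A_{1},\dots,A_{d}\right)$ is a column contraction) and the elementary bookkeeping $\left\langle x,A_{i}^{*}A_{i}x\right\rangle =\left\Vert A_{i}x\right\Vert ^{2}$. Equivalently, one can record the operator identity $I=\sum_{i=1}^{d}A_{i}^{*}A_{i}+D^{2}$, which is nothing but the definition of $D$ rewritten, and recover the stated vector form by pairing against $x$; this is the same mechanism as the two-channel identity $I=A^{*}A+D_{A}^{2}$ used above, with the single operator $A$ replaced by the column $\left(A_{1},\dots,A_{d}\right)$.
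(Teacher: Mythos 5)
Your proof is correct and is essentially the paper's own argument: both simply use $D^{2}=I-\sum_{i=1}^{d}A_{i}^{*}A_{i}$ and pair against $x$, with $\left\langle x,A_{i}^{*}A_{i}x\right\rangle =\left\Vert A_{i}x\right\Vert ^{2}$ doing the bookkeeping. The extra remarks about the isometry $W$ and the well-definedness of $D$ are fine but add nothing beyond the paper's one-line computation.
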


\begin{proof}
We have 
\[
\sum^{d}_{i=1}\left\Vert A_{i}x\right\Vert ^{2}+\left\Vert Dx\right\Vert ^{2}=\left\langle x,\left(\sum\nolimits^{d}_{i=1}A^{*}_{i}A_{i}+D^{2}\right)x\right\rangle =\left\langle x,Ix\right\rangle =\left\Vert x\right\Vert ^{2}.
\]
\end{proof}

We now iterate this identity along a rooted $d$-ary tree.
\begin{defn}
\label{def:2-6}Let $\mathcal{T}$ be the rooted $d$-ary tree. We
label each node by a word 
\[
\alpha=i_{1}\cdots i_{n},\qquad i_{k}\in\left\{ 1,\dots,d\right\} ,
\]
with root $\emptyset$ (the empty word) and depth $\left|\alpha\right|=n$.
For a word $\alpha=i_{1}\cdots i_{n}$ we define 
\[
A_{\alpha}:=A_{i_{n}}\cdots A_{i_{1}},\qquad A_{\emptyset}:=I.
\]
For $x\in H$ and a node $\alpha$, we set 
\[
E_{\alpha}(x):=\left\Vert A_{\alpha}x\right\Vert ^{2},\qquad\Delta_{\alpha}(x):=\left\Vert DA_{\alpha}x\right\Vert ^{2}.
\]
For each level $N\ge0$ we define the level energy 
\[
L_{N}(x):=\sum_{\left|\alpha\right|=N}E_{\alpha}(x)=\sum_{\left|\alpha\right|=N}\left\Vert A_{\alpha}x\right\Vert ^{2}.
\]
\end{defn}

With this notation, the multichannel splitting gives a refinement
identity along the tree.
\begin{prop}
\label{prop:2-3}For every $x\in H$ and every integer $N\ge1$, 
\begin{equation}
\left\Vert x\right\Vert ^{2}=\sum_{\left|\alpha\right|=N}\left\Vert A_{\alpha}x\right\Vert ^{2}+\sum^{N-1}_{n=0}\sum_{\left|\alpha\right|=n}\left\Vert DA_{\alpha}x\right\Vert ^{2}.\label{eq:2-4}
\end{equation}
\end{prop}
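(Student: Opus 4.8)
The plan is to prove \eqref{eq:2-4} by induction on $N$, using \prettyref{lem:2-1} as the single-step splitting device. The base case $N=1$ is immediate: applying \prettyref{lem:2-1} to $x$ directly gives $\Vert x\Vert^{2}=\sum_{i=1}^{d}\Vert A_{i}x\Vert^{2}+\Vert Dx\Vert^{2}$, and since the words of length $1$ are exactly the single letters $i\in\{1,\dots,d\}$ with $A_{i}=A_{i}A_{\emptyset}$, while the only word of length $0$ is $\emptyset$ with $A_{\emptyset}=I$, this reads $\Vert x\Vert^{2}=\sum_{|\alpha|=1}\Vert A_{\alpha}x\Vert^{2}+\sum_{|\alpha|=0}\Vert DA_{\alpha}x\Vert^{2}$, which is \eqref{eq:2-4} for $N=1$.

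For the inductive step, suppose \eqref{eq:2-4} holds for some $N\ge1$. The key observation is that for any word $\alpha$ of length $N$, applying \prettyref{lem:2-1} to the vector $y=A_{\alpha}x$ gives
\[
\left\Vert A_{\alpha}x\right\Vert ^{2}=\sum_{i=1}^{d}\left\Vert A_{i}A_{\alpha}x\right\Vert ^{2}+\left\Vert DA_{\alpha}x\right\Vert ^{2}.
\]
Now I use the bookkeeping fact that $A_{i}A_{\alpha}=A_{\alpha i}$ (where $\alpha i$ denotes the word $\alpha$ with letter $i$ appended), which follows directly from \prettyref{def:2-6}: if $\alpha=i_{1}\cdots i_{N}$ then $A_{\alpha i}=A_{i}A_{i_{N}}\cdots A_{i_{1}}=A_{i}A_{\alpha}$. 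Hence $\sum_{i=1}^{d}\Vert A_{i}A_{\alpha}x\Vert^{2}=\sum_{\beta:\,|\beta|=N+1,\,\beta\text{ extends }\alpha}\Vert A_{\beta}x\Vert^{2}$, so
\[
\left\Vert A_{\alpha}x\right\Vert ^{2}=\sum_{\substack{|\beta|=N+1\\ \beta=\alpha i}}\left\Vert A_{\beta}x\right\Vert ^{2}+\left\Vert DA_{\alpha}x\right\Vert ^{2}.
\]
Summing this over all $\alpha$ with $|\alpha|=N$ and substituting into the right-hand side of the induction hypothesis, the term $\sum_{|\alpha|=N}\Vert A_{\alpha}x\Vert^{2}$ gets replaced by $\sum_{|\beta|=N+1}\Vert A_{\beta}x\Vert^{2}+\sum_{|\alpha|=N}\Vert DA_{\alpha}x\Vert^{2}$, since every word of length $N+1$ has a unique parent of length $N$. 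The new defect sum $\sum_{|\alpha|=N}\Vert DA_{\alpha}x\Vert^{2}$ is exactly the $n=N$ term that extends the range of the double sum from $n\le N-1$ to $n\le N$, yielding \eqref{eq:2-4} with $N$ replaced by $N+1$.

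There is no serious obstacle here; the only thing requiring care is the indexing — making sure the parent/child correspondence between level-$N$ and level-$(N+1)$ words is a bijection onto the disjoint union over parents, and that the convention $A_{\alpha}=A_{i_{n}}\cdots A_{i_{1}}$ composes correctly so that appending a letter on the right of the word corresponds to left-multiplication by the new operator factor. Once that bookkeeping is in place, everything is a routine application of \prettyref{lem:2-1} node by node and a rearrangement of finite sums, all of which are unambiguous since at each fixed level only finitely many ($d^{N}$) terms appear.
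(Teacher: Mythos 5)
Your proof is correct and follows essentially the same route as the paper: the core step in both is to apply \prettyref{lem:2-1} to each vector $A_{\alpha}x$ at a fixed level and sum over that level, using the parent/child correspondence $A_{\alpha i}=A_{i}A_{\alpha}$; organizing this as an induction on $N$ rather than as a telescoping sum of the level identity $L_{n}(x)=L_{n+1}(x)+\sum_{|\alpha|=n}\Delta_{\alpha}(x)$ is only a cosmetic difference.
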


\begin{proof}
Fix a node $\alpha$. Apply \prettyref{lem:2-1} to the vector $A_{\alpha}x$:
\[
\left\Vert A_{\alpha}x\right\Vert ^{2}=\sum^{d}_{i=1}\left\Vert A_{i}A_{\alpha}x\right\Vert ^{2}+\left\Vert DA_{\alpha}x\right\Vert ^{2}=\sum^{d}_{i=1}\left\Vert A_{\alpha i}x\right\Vert ^{2}+\Delta_{\alpha}(x),
\]
where $\alpha i$ is the child of $\alpha$ obtained by appending
$i$. Now sum over all nodes at depth $n$. The left-hand side becomes
$L_{n}(x)$, and the first term on the right becomes $L_{n+1}(x)$.
Thus 
\[
L_{n}(x)=L_{n+1}(x)+\sum_{\left|\alpha\right|=n}\Delta_{\alpha}(x)
\]
for every $n\ge0$. Summing this identity from $n=0$ to $n=N-1$
gives 
\[
L_{0}(x)=L_{N}(x)+\sum^{N-1}_{n=0}\sum_{\left|\alpha\right|=n}\Delta_{\alpha}(x).
\]
Since $L_{0}(x)=\left\Vert x\right\Vert ^{2}$, the proposition follows. 
\end{proof}

A few immediate consequences will be useful later.
\begin{cor}[monotonicity and limit]
\label{cor:2-4} For every $x\in H$ the sequence $\left(L_{N}(x)\right)_{N\ge0}$
is decreasing, and the limit 
\[
L_{\infty}(x):=\lim_{N\to\infty}L_{N}(x)
\]
exists in $\left[0,\left\Vert x\right\Vert ^{2}\right]$. Moreover,
\[
\sum_{n\ge0}\sum_{\left|\alpha\right|=n}\left\Vert DA_{\alpha}x\right\Vert ^{2}=\left\Vert x\right\Vert ^{2}-L_{\infty}(x).
\]
\end{cor}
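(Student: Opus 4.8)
The plan is to read everything off the level recursion already isolated inside the proof of \prettyref{prop:2-3}. That argument establishes, for every $x\in H$ and every $n\ge0$,
\[
L_{n}(x)=L_{n+1}(x)+\sum_{\left|\alpha\right|=n}\Delta_{\alpha}(x),
\]
and since each $\Delta_{\alpha}(x)=\left\Vert DA_{\alpha}x\right\Vert ^{2}\ge0$, this immediately gives $L_{n+1}(x)\le L_{n}(x)$; hence $\left(L_{N}(x)\right)_{N\ge0}$ is decreasing. It is also bounded below by $0$, since $L_{N}(x)=\sum_{\left|\alpha\right|=N}\left\Vert A_{\alpha}x\right\Vert ^{2}$ is a finite sum of squared norms, and bounded above by $L_{0}(x)=\left\Vert x\right\Vert ^{2}$. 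A bounded monotone sequence of reals converges, so $L_{\infty}(x):=\lim_{N}L_{N}(x)$ exists and lies in $\left[0,\left\Vert x\right\Vert ^{2}\right]$.

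For the series identity, I would invoke \eqref{eq:2-4} directly: for each $N\ge1$,
\[
\sum^{N-1}_{n=0}\sum_{\left|\alpha\right|=n}\left\Vert DA_{\alpha}x\right\Vert ^{2}=\left\Vert x\right\Vert ^{2}-L_{N}(x).
\]
The left-hand side is the $N$-th partial sum (grouped by level) of a series with nonnegative terms, and by the first part the right-hand side converges to $\left\Vert x\right\Vert ^{2}-L_{\infty}(x)$ as $N\to\infty$. Hence the series converges, with sum $\left\Vert x\right\Vert ^{2}-L_{\infty}(x)$. Because every summand is nonnegative, the value is independent of the order of summation, so the level-by-level grouping written in the statement is harmless and one may equally read $\sum_{n\ge0}\sum_{\left|\alpha\right|=n}$ as an unordered sum over the node set of $\mathcal{T}$.

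There is no substantive obstacle here: the corollary is pure bookkeeping on top of \prettyref{prop:2-3}, and the only point needing even a half-sentence of care is the grouping/interchange of the double sum, which is immediate from nonnegativity of the terms (monotone convergence, or Tonelli for the counting measure on the tree). As an alternative one could phrase the argument at the operator level---\prettyref{prop:2-3} yields $I-\sum_{\left|\alpha\right|=N}A^{*}_{\alpha}A_{\alpha}=\sum^{N-1}_{n=0}\sum_{\left|\alpha\right|=n}A^{*}_{\alpha}D^{2}A_{\alpha}$, an increasing family in $B(H)_{+}$ bounded above by $I$, hence strongly convergent---and then evaluate against $x$; but the scalar route above is shorter and suffices.
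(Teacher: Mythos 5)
Your proposal is correct and follows essentially the same route as the paper: monotonicity is read off the level recursion $L_{n}(x)=L_{n+1}(x)+\sum_{|\alpha|=n}\Delta_{\alpha}(x)$ from the proof of \prettyref{prop:2-3}, and the series identity is obtained by letting $N\to\infty$ in \eqref{eq:2-4}. The extra remarks on regrouping the nonnegative double sum and the operator-level variant are harmless additions but not needed.
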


\begin{proof}
From the proof of \prettyref{prop:2-3} we have 
\[
L_{N+1}(x)=L_{N}(x)-\sum_{\left|\alpha\right|=N}\Delta_{\alpha}(x)\le L_{N}(x),
\]
so $\left(L_{N}(x)\right)$ is decreasing and bounded below by $0$,
hence convergent. Taking $N\to\infty$ in \eqref{eq:2-4} gives the
last statement. 
\end{proof}

\begin{cor}[column isometric case]
\label{cor:2-5} If $\sum^{d}_{i=1}A^{*}_{i}A_{i}=I$, then $D=0$
and for every $x\in H$ and every $N\ge0$, 
\[
\left\Vert x\right\Vert ^{2}=\sum_{\left|\alpha\right|=N}\left\Vert A_{\alpha}x\right\Vert ^{2}.
\]
In particular $L_{N}(x)=\left\Vert x\right\Vert ^{2}$ for all $N$,
and $L_{\infty}(x)=\left\Vert x\right\Vert ^{2}$. 
\end{cor}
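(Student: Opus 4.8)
The plan is to read off the result directly from \prettyref{prop:2-3} after observing that the hypothesis kills the defect operator. First I would note that under $\sum_{i=1}^{d}A_{i}^{*}A_{i}=I$ the definition of $D$ gives $D^{2}=I-\sum_{i=1}^{d}A_{i}^{*}A_{i}=0$; since $D$ was chosen as the \emph{positive} square root (so $D\in B(H)_{+}$), uniqueness of the positive square root of $0$ forces $D=0$. Hence $DA_{\alpha}x=0$ for every node $\alpha$ and every $x\in H$, i.e.\ $\Delta_{\alpha}(x)=\left\Vert DA_{\alpha}x\right\Vert ^{2}=0$ for all $\alpha$.

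Substituting this into the refinement identity \eqref{eq:2-4} of \prettyref{prop:2-3}, the double sum $\sum_{n=0}^{N-1}\sum_{\left|\alpha\right|=n}\left\Vert DA_{\alpha}x\right\Vert ^{2}$ vanishes term by term, leaving $\left\Vert x\right\Vert ^{2}=\sum_{\left|\alpha\right|=N}\left\Vert A_{\alpha}x\right\Vert ^{2}=L_{N}(x)$ for every $N\ge1$; the case $N=0$ is immediate from $A_{\emptyset}=I$, giving $L_{0}(x)=\left\Vert x\right\Vert ^{2}$. (Alternatively one can induct on $N$ using the one-step recursion $L_{n}(x)=L_{n+1}(x)+\sum_{\left|\alpha\right|=n}\Delta_{\alpha}(x)$ established in the proof of \prettyref{prop:2-3}, which with $\Delta_{\alpha}\equiv0$ reads $L_{n+1}(x)=L_{n}(x)$.) Finally, since $L_{N}(x)=\left\Vert x\right\Vert ^{2}$ for every $N$, the limit $L_{\infty}(x)=\lim_{N\to\infty}L_{N}(x)$ supplied by \prettyref{cor:2-4} equals $\left\Vert x\right\Vert ^{2}$.

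There is essentially no genuine obstacle here, since the corollary is a direct specialization of \prettyref{prop:2-3}; the only step worth a remark is the passage from $D^{2}=0$ to $D=0$, which relies on $D$ being the (unique) positive square root. Everything else is bookkeeping with the level energies $L_{N}(x)$.
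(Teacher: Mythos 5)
Your argument is correct and coincides with the paper's proof: both conclude $D=0$ from $\sum_{i}A_{i}^{*}A_{i}=I$ (the paper leaves implicit the passage from $D^{2}=0$ to $D=0$ that you spell out via uniqueness of the positive square root), then note that all $\Delta_{\alpha}(x)$ vanish so that \eqref{eq:2-4} reduces to the stated identity, with $L_{\infty}(x)=\left\Vert x\right\Vert ^{2}$ following immediately. No differences of substance.
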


\begin{proof}
If $\sum A^{*}_{i}A_{i}=I$ then $D=0$, so all $\Delta_{\alpha}(x)$
vanish and \eqref{eq:2-4} reduces to the stated identity. 
\end{proof}

\begin{cor}
\label{cor:2-6}Assume there is a constant $c\in(0,1)$ such that
\[
\sum^{d}_{i=1}A^{*}_{i}A_{i}\le cI.
\]
Then for every $x\in H$ and every $N\ge0$, 
\[
\sum_{\left|\alpha\right|=N}\left\Vert A_{\alpha}x\right\Vert ^{2}\le c^{N}\left\Vert x\right\Vert ^{2}.
\]
In particular $L_{\infty}(x)=0$, and 
\[
\sum_{n\ge0}\ \sum_{\left|\alpha\right|=n}\left\Vert DA_{\alpha}x\right\Vert ^{2}=\left\Vert x\right\Vert ^{2}.
\]
\end{cor}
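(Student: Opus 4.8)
The plan is to upgrade the level recursion from the proof of \prettyref{prop:2-3} to a geometric contraction, using the strengthened hypothesis $\sum_{i=1}^d A^*_iA_i\le cI$ in place of the bound $\le I$. The key pointwise estimate is obtained exactly as in the proof of \prettyref{lem:2-1}: for a fixed node $\alpha$, apply the operator inequality to the vector $A_\alpha x$ to get
\[
\sum_{i=1}^d\left\Vert A_iA_\alpha x\right\Vert^2=\left\langle A_\alpha x,\Bigl(\sum\nolimits_{i=1}^d A^*_iA_i\Bigr)A_\alpha x\right\rangle\le c\left\Vert A_\alpha x\right\Vert^2.
\]
Since $A_iA_\alpha=A_{\alpha i}$ by \prettyref{def:2-6}, this reads $\sum_{i=1}^d E_{\alpha i}(x)\le c\,E_\alpha(x)$.

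Next I would sum this inequality over all words $\alpha$ of length $n$. The left-hand side then collects, without repetition, exactly the energies $E_\beta(x)$ over all words $\beta$ of length $n+1$, because every such $\beta$ is uniquely $\beta=\alpha i$ with $|\alpha|=n$ and $i\in\{1,\dots,d\}$. Hence $L_{n+1}(x)\le c\,L_n(x)$ for every $n\ge0$. Iterating from $n=0$ and using $L_0(x)=\left\Vert x\right\Vert^2$ gives $L_N(x)\le c^N\left\Vert x\right\Vert^2$, which is the first assertion.

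The remaining two statements follow at once. Since $0<c<1$ we have $c^N\to0$, and \prettyref{cor:2-4} already guarantees that $L_\infty(x)=\lim_N L_N(x)$ exists, so $L_\infty(x)=0$. Substituting this into the identity $\sum_{n\ge0}\sum_{|\alpha|=n}\left\Vert DA_\alpha x\right\Vert^2=\left\Vert x\right\Vert^2-L_\infty(x)$ from \prettyref{cor:2-4} yields $\sum_{n\ge0}\sum_{|\alpha|=n}\left\Vert DA_\alpha x\right\Vert^2=\left\Vert x\right\Vert^2$.

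There is no serious obstacle here: the argument is a one-line strengthening of \prettyref{prop:2-3}. The only point deserving a moment's care is the tree bookkeeping in the summation step—verifying that summing $\sum_i E_{\alpha i}\le c\,E_\alpha$ over level $n$ reproduces precisely $L_{n+1}$ on the left, with no node of level $n+1$ omitted or double-counted—together with the observation that the final (infinite) series identity needs no separate convergence discussion, since its convergence is inherited directly from \prettyref{cor:2-4}.
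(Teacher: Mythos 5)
Your proof is correct and follows essentially the same route as the paper's: apply the hypothesis in the form $\sum_{i}\left\Vert A_{i}y\right\Vert ^{2}\le c\left\Vert y\right\Vert ^{2}$ with $y=A_{\alpha}x$, sum over level-$N$ words to get $L_{N+1}(x)\le cL_{N}(x)$, iterate, and invoke \prettyref{cor:2-4} for the limit and the series identity. No discrepancies to report.
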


\begin{proof}
The hypothesis implies $\sum^{d}_{i=1}\left\Vert A_{i}y\right\Vert ^{2}\le c\left\Vert y\right\Vert ^{2}$
for every $y\in H$. For fixed $N$ apply this with $y=A_{\alpha}x$
and sum over all words $\alpha$ of length $N$. Then 
\[
L_{N+1}(x)=\sum_{\left|\beta\right|=N+1}\left\Vert A_{\beta}x\right\Vert ^{2}=\sum_{\left|\alpha\right|=N}\ \sum^{d}_{i=1}\left\Vert A_{i}A_{\alpha}x\right\Vert ^{2}\le c\sum_{\left|\alpha\right|=N}\left\Vert A_{\alpha}x\right\Vert ^{2}=cL_{N}(x).
\]
By induction we get $L_{N}(x)\le c^{N}L_{0}(x)=c^{N}\left\Vert x\right\Vert ^{2}$.
The remaining statements follow from \prettyref{cor:2-4}. 
\end{proof}

\section{Kernel interpolation}\label{sec:3}

The setting for our next section is that of kernels and their reproducing
kernel Hilbert spaces(RKHSs). For the benefit of readers, we offer
the following citations \cite{MR4905634,MR4302453,MR3251728,MR4509085,MR3526117,MR4619928,MR4582508,MR3860446}.

We now place the $\lambda_{n}P_{n}$ products from \prettyref{thm:2-4}
in a reproducing kernel Hilbert space (RKHS) setting, where the projections
arise from point evaluations. Let $X$ be a set and let $H_{k}$ be
an RKHS of complex-valued functions on $X$ with reproducing kernel
$k$. For $x\in X$ we write $k_{x}:=k\left(\cdot,x\right)\in H_{k}$,
so that 
\begin{equation}
f\left(x\right)=\left\langle k_{x},f\right\rangle ,\qquad f\in H_{k}.\label{eq:3-1}
\end{equation}
Fix a sequence $\left(x_{n}\right)_{n\ge1}\subset X$ such that $k\left(x_{n},x_{n}\right)>0$
for all $n$, and set 
\[
g_{n}:=\frac{k_{x_{n}}}{\left\Vert k_{x_{n}}\right\Vert }=\frac{k_{x_{n}}}{\sqrt{k\left(x_{n},x_{n}\right)}}\in H_{k}.
\]
Let $P_{n}=\left|g_{n}\right\rangle \left\langle g_{n}\right|$ denote
the orthogonal projection onto $\mathbb{C}g_{n}$, i.e. 
\[
P_{n}f=\left\langle g_{n},f\right\rangle g_{n},\qquad f\in H_{k}.
\]
Evaluating at $x_{n}$ and using the reproducing property \eqref{eq:3-1}
shows that this projection is simply 
\[
P_{n}f=\frac{f\left(x_{n}\right)}{k\left(x_{n},x_{n}\right)}k_{x_{n}}.
\]

Now fix $0<\lambda_{n}<2$, set $A_{n}:=I-\lambda_{n}P_{n}$, and
let $T_{n}=A_{n}A_{n-1}\cdots A_{1}$ as in \prettyref{thm:2-4}.
Since the $P_{n}$ are orthogonal projections, each $A_{n}$ is a
contraction, and \eqref{eq:2-2} becomes an explicit evaluation formula
in $H_{k}$: 
\begin{align}
\left\Vert f\right\Vert ^{2}-\left\Vert T_{N}f\right\Vert ^{2} & =\sum^{N}_{n=1}\lambda_{n}\left(2-\lambda_{n}\right)\left\Vert P_{n}T_{n-1}f\right\Vert ^{2}\nonumber \\
 & =\sum^{N}_{n=1}\lambda_{n}\left(2-\lambda_{n}\right)\frac{\left|\left(T_{n-1}f\right)\left(x_{n}\right)\right|^{2}}{k\left(x_{n},x_{n}\right)}.\label{eq:3-2}
\end{align}
In particular, as $N\to\infty$ the partial sums on the right are
monotone increasing and remain bounded by $\left\Vert f\right\Vert ^{2}$,
hence 
\[
\sum^{\infty}_{n=1}\lambda_{n}\left(2-\lambda_{n}\right)\frac{\left|\left(T_{n-1}f\right)\left(x_{n}\right)\right|^{2}}{k\left(x_{n},x_{n}\right)}\le\left\Vert f\right\Vert ^{2},\qquad f\in H_{k}.
\]

To connect this with interpolation, we pass to the usual affine update.
Let $\left(y_{n}\right)_{n\ge1}\subset\mathbb{C}$ be prescribed data
and assume there exists $f^{\ast}\in H_{k}$ such that 
\[
f^{\ast}\left(x_{n}\right)=y_{n},\qquad n\ge1.
\]
Starting from $f_{0}=0$, define $\left(f_{n}\right)_{n\ge0}\subset H_{k}$
recursively by 
\begin{equation}
f_{n}=f_{n-1}+\lambda_{n}\left(y_{n}-f_{n-1}\left(x_{n}\right)\right)\frac{k_{x_{n}}}{k\left(x_{n},x_{n}\right)},\qquad n\ge1.\label{eq:3-3}
\end{equation}
Set $e_{n}:=f^{\ast}-f_{n}$. A short computation shows that the error
evolves by the same $\lambda_{n}P_{n}$ factors: 
\[
e_{n}=e_{n-1}-\lambda_{n}P_{n}e_{n-1}=\left(I-\lambda_{n}P_{n}\right)e_{n-1}=A_{n}e_{n-1},
\]
so $e_{n}=T_{n}e_{0}=T_{n}f^{\ast}$. Substituting $f=f^{\ast}$ in
\eqref{eq:3-2} yields an identity in which the dissipated pieces
are exactly the interpolation residuals.
\begin{thm}
\label{thm:3-1} Assume the interpolation constraints $f^{\ast}\left(x_{n}\right)=y_{n}$
admit a solution $f^{\ast}\in H_{k}$, and let $\left(f_{n}\right)$
be defined by \eqref{eq:3-3}. Then for every $N\ge1$, 
\begin{equation}
\left\Vert f^{\ast}-f_{N}\right\Vert ^{2}=\left\Vert f^{\ast}\right\Vert ^{2}-\sum^{N}_{n=1}\lambda_{n}\left(2-\lambda_{n}\right)\frac{\left|y_{n}-f_{n-1}\left(x_{n}\right)\right|^{2}}{k\left(x_{n},x_{n}\right)}.\label{eq:3-4}
\end{equation}
In particular, 
\[
\sum^{\infty}_{n=1}\lambda_{n}\left(2-\lambda_{n}\right)\frac{\left|y_{n}-f_{n-1}\left(x_{n}\right)\right|^{2}}{k\left(x_{n},x_{n}\right)}\le\left\Vert f^{\ast}\right\Vert ^{2}.
\]
\end{thm}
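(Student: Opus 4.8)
The plan is to recognize \eqref{eq:3-4} as a direct specialization of the evaluation identity \eqref{eq:3-2}, once the interpolation error $e_{n}=f^{\ast}-f_{n}$ is identified with the operator orbit $T_{n}f^{\ast}$. So the first step is to verify the error recursion $e_{n}=A_{n}e_{n-1}$. Subtracting \eqref{eq:3-3} from $f^{\ast}$ and invoking the hypothesis $f^{\ast}(x_{n})=y_{n}$, I would rewrite the scalar residual as
\[
y_{n}-f_{n-1}(x_{n})=f^{\ast}(x_{n})-f_{n-1}(x_{n})=e_{n-1}(x_{n}).
\]
Combining this with the reproducing-property formula $P_{n}h=\frac{h(x_{n})}{k(x_{n},x_{n})}k_{x_{n}}$ recorded just before the theorem shows that the update increment in \eqref{eq:3-3} is exactly $\lambda_{n}P_{n}e_{n-1}$, hence $e_{n}=(I-\lambda_{n}P_{n})e_{n-1}=A_{n}e_{n-1}$. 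Since $f_{0}=0$ gives $e_{0}=f^{\ast}$, iterating yields $e_{N}=T_{N}f^{\ast}$ for every $N$.

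Second, I would substitute $f=f^{\ast}$ into \eqref{eq:3-2} (which is itself \eqref{eq:2-2} of \prettyref{thm:2-4} rewritten via the reproducing property). On the left-hand side, $\|T_{N}f^{\ast}\|^{2}=\|e_{N}\|^{2}=\|f^{\ast}-f_{N}\|^{2}$. On the right-hand side, each summand involves $(T_{n-1}f^{\ast})(x_{n})=e_{n-1}(x_{n})=y_{n}-f_{n-1}(x_{n})$ by the computation above. Feeding both identifications into \eqref{eq:3-2} produces \eqref{eq:3-4} verbatim.

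Third, for the stated consequence: since $\|f^{\ast}-f_{N}\|^{2}\ge 0$ and each coefficient $\lambda_{n}(2-\lambda_{n})$ is strictly positive for $\lambda_{n}\in(0,2)$, the identity \eqref{eq:3-4} shows the partial sums of $\sum_{n}\lambda_{n}(2-\lambda_{n})|y_{n}-f_{n-1}(x_{n})|^{2}/k(x_{n},x_{n})$ are nondecreasing in $N$ and bounded above by $\|f^{\ast}\|^{2}$; hence the series converges, with the asserted bound.

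I do not anticipate a genuine obstacle: the content is entirely bookkeeping on top of \prettyref{thm:2-4}. The one point meriting care is that the interpolation hypothesis is precisely what converts the \emph{observed} residual $y_{n}-f_{n-1}(x_{n})$ — a quantity computable from data and the current iterate — into the \emph{intrinsic} error evaluation $e_{n-1}(x_{n})$; without an actual interpolant $f^{\ast}\in H_{k}$ this identification breaks and the right-hand side of \eqref{eq:3-4} loses its meaning. Everything else is an application of the telescoping identity through \eqref{eq:3-2}.
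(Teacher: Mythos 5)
Your proposal is correct and follows essentially the same route as the paper: identify the error recursion $e_{n}=A_{n}e_{n-1}$ so that $e_{N}=T_{N}f^{\ast}$, use the reproducing property together with the interpolation hypothesis to convert $(T_{n-1}f^{\ast})(x_{n})$ into $y_{n}-f_{n-1}(x_{n})$, and substitute $f=f^{\ast}$ into \eqref{eq:3-2}; the final bound then follows from nonnegativity of $\left\Vert f^{\ast}-f_{N}\right\Vert ^{2}$ exactly as in the text.
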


\begin{proof}
We have $e_{n}=f^{\ast}-f_{n}=T_{n}f^{\ast}$. Moreover, 
\[
\left(T_{n-1}f^{\ast}\right)\left(x_{n}\right)=\left\langle k_{x_{n}},T_{n-1}f^{\ast}\right\rangle =\left\langle k_{x_{n}},e_{n-1}\right\rangle =y_{n}-f_{n-1}\left(x_{n}\right),
\]
since $e_{n-1}=f^{\ast}-f_{n-1}$ and $f^{\ast}\left(x_{n}\right)=y_{n}$.
Substituting $f=f^{\ast}$ in \eqref{eq:3-2} gives \eqref{eq:3-4}. 
\end{proof}

\begin{rem}
The identity \eqref{eq:3-4} gives an exact energy balance for the
$\lambda_{n}P_{n}$ dynamics: the error norm drops by summing the
squared residuals at the sampled points, with the weights $\lambda_{n}\left(2-\lambda_{n}\right)/k\left(x_{n},x_{n}\right)$.
For convergence one needs, in addition, an effectiveness hypothesis
in the sense of \prettyref{def:2-2}. If $T_{n}\xrightarrow{s}T_{\infty}$,
then $e_{n}=T_{n}f^{\ast}\to T_{\infty}f^{\ast}$ in $H_{k}$, and
hence $f_{n}\to f^{\ast}-T_{\infty}f^{\ast}$. In particular, if $T_{\infty}f^{\ast}=0$
(for example, if $T_{n}\xrightarrow{s}0$), then $f_{n}$ converges
in $H_{k}$ to the interpolant $f^{\ast}$. By \prettyref{thm:2-4},
the summability condition \eqref{eq:2-3} is a projection independent
sufficient hypothesis ensuring effectiveness of $\left\{ T_{n}\right\} $,
while sharper criteria in the present kernel setting depend on the
geometry of the kernel sections $\left\{ k_{x_{n}}\right\} $. 
\end{rem}

We next consider the effect of additive noise. The geometry of the
update is unchanged, but the exact identity \eqref{eq:3-4} is replaced
by a decomposition of the expected error into a dissipative part and
a noise term.

Assume now that the data have the form 
\[
y_{n}=f^{\ast}\left(x_{n}\right)+\varepsilon_{n},
\]
where $\varepsilon_{n}$ are complex-valued random variables. We impose
the usual square-integrable, mean-zero assumptions 
\begin{equation}
\mathbb{E}\left(\varepsilon_{n}\mid x_{n}\right)=0,\qquad\mathbb{E}\left(\left|\varepsilon_{n}\right|^{2}\mid x_{n}\right)\le\sigma^{2}\label{eq:3-5}
\end{equation}
for some $\sigma\ge0$, and we assume that the noises $\left\{ \varepsilon_{n}\right\} $
are independent across $n$. The relaxed update \eqref{eq:3-3} is
left unchanged, and we continue to write $e_{n}=f^{\ast}-f_{n}$.
\begin{prop}
\label{prop:3-2} Let $\left\{ f_{n}\right\} $ be defined by \eqref{eq:3-3}
with $y_{n}=f^{\ast}\left(x_{n}\right)+\varepsilon_{n}$, where $f^{\ast}\in H_{k}$
and the noises $\left\{ \varepsilon_{n}\right\} $ satisfy \eqref{eq:3-5}.
Set $e_{n}=f^{\ast}-f_{n}$. Then for every $N\ge1$, 
\begin{equation}
\mathbb{E}\left\Vert e_{N}\right\Vert ^{2}=\left\Vert f^{\ast}\right\Vert ^{2}-\sum^{N}_{n=1}\lambda_{n}\left(2-\lambda_{n}\right)\mathbb{E}\left\Vert P_{n}e_{n-1}\right\Vert ^{2}+\sum^{N}_{n=1}\lambda^{2}_{n}\mathbb{E}\left(\frac{\left|\varepsilon_{n}\right|^{2}}{k\left(x_{n},x_{n}\right)}\right).\label{eq:3-6}
\end{equation}
\end{prop}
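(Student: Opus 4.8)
The plan is to reduce the noisy recursion to an error recursion of exactly the same $\lambda_n P_n$ form as in \prettyref{thm:3-1}, but carrying an additive noise vector, then expand $\|e_n\|^2$, kill the cross term in expectation, apply the single-factor defect identity, and telescope. First I would put \eqref{eq:3-3} in error form. Writing $u_n := k_{x_n}/k(x_n,x_n)$, so that $\|u_n\|^2 = 1/k(x_n,x_n)$ and $P_n f = f(x_n)\,u_n$, and using $y_n - f_{n-1}(x_n) = (f^{\ast} - f_{n-1})(x_n) + \varepsilon_n = e_{n-1}(x_n) + \varepsilon_n$, I obtain
\[
e_n = e_{n-1} - \lambda_n\big(e_{n-1}(x_n) + \varepsilon_n\big)u_n = A_n e_{n-1} - \lambda_n \varepsilon_n u_n,
\]
with $A_n = I - \lambda_n P_n$ as before, and note $e_0 = f^{\ast} - f_0 = f^{\ast}$.

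Next I would expand the Hilbert-space norm:
\[
\|e_n\|^2 = \|A_n e_{n-1}\|^2 - 2\lambda_n \operatorname{Re}\big\langle A_n e_{n-1},\, \varepsilon_n u_n\big\rangle + \lambda_n^2 |\varepsilon_n|^2 \|u_n\|^2,
\]
and take expectations. The crucial point is that the middle term vanishes in expectation. Introduce the filtration $\mathcal{F}_{n-1} := \sigma(x_1,\dots,x_n,\varepsilon_1,\dots,\varepsilon_{n-1})$; then $e_{n-1}$, $A_n$, and $u_n$ are all $\mathcal{F}_{n-1}$-measurable, while by \eqref{eq:3-5} together with the independence of the $\varepsilon_n$ one has $\mathbb{E}(\varepsilon_n \mid \mathcal{F}_{n-1}) = 0$. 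Hence the conditional expectation of the cross term factors the $\mathcal{F}_{n-1}$-measurable part out of the inner product and leaves a multiple of $\mathbb{E}(\varepsilon_n \mid \mathcal{F}_{n-1}) = 0$; the tower property then gives $\mathbb{E}\operatorname{Re}\langle A_n e_{n-1}, \varepsilon_n u_n\rangle = 0$. Since $\|u_n\|^2 = 1/k(x_n,x_n)$, this leaves
\[
\mathbb{E}\|e_n\|^2 = \mathbb{E}\|A_n e_{n-1}\|^2 + \lambda_n^2\, \mathbb{E}\!\left(\frac{|\varepsilon_n|^2}{k(x_n,x_n)}\right).
\]

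Now I would invoke the single-factor defect identity in the $\lambda P$ case (the displayed identity in \prettyref{exa:2-3}, equivalently \prettyref{prop:2-1} with the one contraction $A_n = I - \lambda_n P_n$): for every $y \in H_k$, $\|A_n y\|^2 = \|y\|^2 - \lambda_n(2-\lambda_n)\|P_n y\|^2$. Applying this with $y = e_{n-1}$ and taking expectations yields the one-step recursion
\[
\mathbb{E}\|e_n\|^2 = \mathbb{E}\|e_{n-1}\|^2 - \lambda_n(2-\lambda_n)\,\mathbb{E}\|P_n e_{n-1}\|^2 + \lambda_n^2\,\mathbb{E}\!\left(\frac{|\varepsilon_n|^2}{k(x_n,x_n)}\right),
\]
and summing from $n=1$ to $N$ together with $\mathbb{E}\|e_0\|^2 = \|f^{\ast}\|^2$ telescopes to \eqref{eq:3-6}.

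The only delicate point is the probabilistic bookkeeping behind the vanishing cross term: one must set up the model so that $x_n$ is revealed (and hence $e_{n-1}$ determined) before $\varepsilon_n$, with $\varepsilon_n$ conditionally mean zero given all of that information; under the independence and conditional-moment assumptions in \eqref{eq:3-5} this is the standard martingale-difference cancellation. Everything else — the error recursion, the norm expansion, the defect identity, and the telescoping sum — is routine and parallels the noiseless argument for \prettyref{thm:3-1}.
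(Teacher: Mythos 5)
Your proof is correct and follows essentially the same route as the paper: rewrite the update as $e_n=(I-\lambda_nP_n)e_{n-1}-\lambda_n\varepsilon_n k_{x_n}/k(x_n,x_n)$, expand the norm, eliminate the cross term by conditioning on the past and $x_n$ using the mean-zero, independent-noise assumption \eqref{eq:3-5}, apply the one-step defect identity, and telescope from $e_0=f^{\ast}$. The filtration bookkeeping you spell out is exactly the conditioning step in the paper's argument, so there is nothing to add.
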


\begin{proof}
At step $n$ one has 
\[
y_{n}-f_{n-1}\left(x_{n}\right)=\left(f^{\ast}\left(x_{n}\right)-f_{n-1}\left(x_{n}\right)\right)+\varepsilon_{n}=e_{n-1}\left(x_{n}\right)+\varepsilon_{n},
\]
so \eqref{eq:3-3} gives 
\begin{align*}
e_{n} & =e_{n-1}-\lambda_{n}\left(e_{n-1}\left(x_{n}\right)+\varepsilon_{n}\right)\frac{k_{x_{n}}}{k\left(x_{n},x_{n}\right)}\\
 & =\left(I-\lambda_{n}P_{n}\right)e_{n-1}-\lambda_{n}\varepsilon_{n}\frac{k_{x_{n}}}{k\left(x_{n},x_{n}\right)}.
\end{align*}
Introduce 
\[
u_{n}:=\left(I-\lambda_{n}P_{n}\right)e_{n-1},\qquad v_{n}:=\lambda_{n}\varepsilon_{n}\frac{k_{x_{n}}}{k\left(x_{n},x_{n}\right)},
\]
so $e_{n}=u_{n}-v_{n}$. As in the deterministic case, 
\[
\left\Vert u_{n}\right\Vert ^{2}=\left\Vert e_{n-1}\right\Vert ^{2}-\lambda_{n}\left(2-\lambda_{n}\right)\left\Vert P_{n}e_{n-1}\right\Vert ^{2},
\]
and 
\[
\left\Vert v_{n}\right\Vert ^{2}=\lambda^{2}_{n}\left|\varepsilon_{n}\right|^{2}\frac{\left\Vert k_{x_{n}}\right\Vert ^{2}}{k\left(x_{n},x_{n}\right)^{2}}=\lambda^{2}_{n}\frac{\left|\varepsilon_{n}\right|^{2}}{k\left(x_{n},x_{n}\right)}.
\]
We expand 
\[
\left\Vert e_{n}\right\Vert ^{2}=\left\Vert u_{n}-v_{n}\right\Vert ^{2}=\left\Vert u_{n}\right\Vert ^{2}+\left\Vert v_{n}\right\Vert ^{2}-\left\langle u_{n},v_{n}\right\rangle -\left\langle v_{n},u_{n}\right\rangle .
\]
Conditioning on $e_{n-1}$ and $x_{n}$, the vector $u_{n}$ is fixed,
while $v_{n}$ is linear in $\varepsilon_{n}$. By \eqref{eq:3-5},
\[
\mathbb{E}\left(\varepsilon_{n}\mid e_{n-1},x_{n}\right)=\mathbb{E}\left(\varepsilon_{n}\mid x_{n}\right)=0,
\]
and similarly $\mathbb{E}\left(\overline{\varepsilon_{n}}\mid e_{n-1},x_{n}\right)=0$,
so both mixed terms have zero conditional mean. Taking conditional
expectation therefore yields 
\begin{align*}
\mathbb{E}\left(\left\Vert e_{n}\right\Vert ^{2}\mid e_{n-1},x_{n}\right) & =\left\Vert e_{n-1}\right\Vert ^{2}-\lambda_{n}\left(2-\lambda_{n}\right)\left\Vert P_{n}e_{n-1}\right\Vert ^{2}\\
 & \qquad+\lambda^{2}_{n}\mathbb{E}\left(\frac{\left|\varepsilon_{n}\right|^{2}}{k\left(x_{n},x_{n}\right)}\mid x_{n}\right).
\end{align*}
Summing these relations over $n=1,\dots,N$ and taking full expectation
gives 
\[
\mathbb{E}\left\Vert e_{N}\right\Vert ^{2}=\mathbb{E}\left\Vert e_{0}\right\Vert ^{2}-\sum^{N}_{n=1}\lambda_{n}\left(2-\lambda_{n}\right)\mathbb{E}\left\Vert P_{n}e_{n-1}\right\Vert ^{2}+\sum^{N}_{n=1}\lambda^{2}_{n}\mathbb{E}\left(\frac{\left|\varepsilon_{n}\right|^{2}}{k\left(x_{n},x_{n}\right)}\right).
\]
Since $e_{0}=f^{\ast}$, this is exactly \eqref{eq:3-6}. 
\end{proof}

The first sum in \eqref{eq:3-6} is the same dissipative term as in
\eqref{eq:3-4}, now averaged over the randomness in the data, while
the second sum measures the accumulated noise. A simple uniform lower
bound on the kernel diagonals turns \eqref{eq:3-6} into a bound on
the expected error.
\begin{cor}
\label{cor:3-3} In addition to the assumptions of \prettyref{prop:3-2},
suppose there is a constant $\kappa>0$ such that 
\begin{equation}
k\left(x_{n},x_{n}\right)\ge\kappa,\qquad n\ge1.\label{eq:3-8}
\end{equation}
Then for every $N\ge1$, 
\begin{equation}
\mathbb{E}\left\Vert e_{N}\right\Vert ^{2}\le\left\Vert f^{\ast}\right\Vert ^{2}+\frac{\sigma^{2}}{\kappa}\sum^{N}_{n=1}\lambda^{2}_{n}.\label{eq:3-9}
\end{equation}
In particular, if $\sum^{\infty}_{n=1}\lambda^{2}_{n}<\infty$, then
$\sup_{N\ge1}\mathbb{E}\left\Vert e_{N}\right\Vert ^{2}<\infty$. 
\end{cor}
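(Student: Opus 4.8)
The plan is to read the bound straight off the exact balance \eqref{eq:3-6} of \prettyref{prop:3-2}, by discarding the dissipative sum and crudely estimating the noise sum. First I would note that every term in the middle sum of \eqref{eq:3-6} is nonnegative: since $0<\lambda_{n}<2$ we have $\lambda_{n}(2-\lambda_{n})>0$, and $\left\Vert P_{n}e_{n-1}\right\Vert ^{2}\ge 0$, so $\sum_{n=1}^{N}\lambda_{n}(2-\lambda_{n})\,\mathbb{E}\left\Vert P_{n}e_{n-1}\right\Vert ^{2}\ge 0$. Dropping this term from \eqref{eq:3-6} only enlarges the right-hand side, so
\[
\mathbb{E}\left\Vert e_{N}\right\Vert ^{2}\le\left\Vert f^{\ast}\right\Vert ^{2}+\sum^{N}_{n=1}\lambda^{2}_{n}\,\mathbb{E}\left(\frac{\left|\varepsilon_{n}\right|^{2}}{k\left(x_{n},x_{n}\right)}\right).
\]

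Next I would bound the noise term termwise. Conditioning on $x_{n}$, the factor $1/k(x_{n},x_{n})$ is $\sigma(x_{n})$-measurable, so by the tower property together with \eqref{eq:3-5} and the uniform lower bound \eqref{eq:3-8},
\[
\mathbb{E}\left(\frac{\left|\varepsilon_{n}\right|^{2}}{k\left(x_{n},x_{n}\right)}\right)=\mathbb{E}\left(\frac{\mathbb{E}\left(\left|\varepsilon_{n}\right|^{2}\mid x_{n}\right)}{k\left(x_{n},x_{n}\right)}\right)\le\mathbb{E}\left(\frac{\sigma^{2}}{k\left(x_{n},x_{n}\right)}\right)\le\frac{\sigma^{2}}{\kappa}.
\]
Substituting this into the previous display gives \eqref{eq:3-9}. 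Finally, for the last assertion, if $C:=\sum_{n=1}^{\infty}\lambda_{n}^{2}<\infty$ then $\sum_{n=1}^{N}\lambda_{n}^{2}\le C$ for every $N$, so \eqref{eq:3-9} yields $\mathbb{E}\left\Vert e_{N}\right\Vert ^{2}\le\left\Vert f^{\ast}\right\Vert ^{2}+\sigma^{2}C/\kappa$ uniformly in $N$, i.e. $\sup_{N\ge1}\mathbb{E}\left\Vert e_{N}\right\Vert ^{2}<\infty$.

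I do not expect a genuine obstacle here: the result is an immediate consequence of \prettyref{prop:3-2}. The only two points deserving a moment's care are the sign of the dissipative term—which rests on the standing hypothesis $0<\lambda_{n}<2$—and the correct handling of conditioning when passing from $\mathbb{E}(\left|\varepsilon_{n}\right|^{2}\mid x_{n})\le\sigma^{2}$ to the unconditional estimate on $\mathbb{E}(\left|\varepsilon_{n}\right|^{2}/k(x_{n},x_{n}))$, which is settled by the tower property since $k(x_{n},x_{n})$ is a function of $x_{n}$.
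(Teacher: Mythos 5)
Your proposal is correct and follows essentially the same route as the paper's proof: drop the nonnegative dissipative sum in \eqref{eq:3-6}, bound each noise term by $\sigma^{2}/\kappa$ using \eqref{eq:3-5} and \eqref{eq:3-8}, and conclude uniform boundedness from $\sum_{n}\lambda_{n}^{2}<\infty$. Your explicit use of the tower property is a slightly more careful phrasing of the same termwise estimate the paper performs.
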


\begin{proof}
Starting from \eqref{eq:3-6} we drop the dissipative term to obtain
\[
\mathbb{E}\left\Vert e_{N}\right\Vert ^{2}\le\left\Vert f^{\ast}\right\Vert ^{2}+\sum^{N}_{n=1}\lambda^{2}_{n}\mathbb{E}\left(\frac{\left|\varepsilon_{n}\right|^{2}}{k\left(x_{n},x_{n}\right)}\right).
\]
By \eqref{eq:3-5} and \eqref{eq:3-8}, 
\[
\mathbb{E}\left(\frac{\left|\varepsilon_{n}\right|^{2}}{k\left(x_{n},x_{n}\right)}\right)\le\frac{1}{\kappa}\mathbb{E}\left|\varepsilon_{n}\right|^{2}\le\frac{\sigma^{2}}{\kappa},
\]
and so 
\[
\sum^{N}_{n=1}\lambda^{2}_{n}\mathbb{E}\left(\frac{\left|\varepsilon_{n}\right|^{2}}{k\left(x_{n},x_{n}\right)}\right)\le\frac{\sigma^{2}}{\kappa}\sum^{N}_{n=1}\lambda^{2}_{n}.
\]
This yields \eqref{eq:3-9}. The final statement follows immediately
from the assumption $\sum_{n}\lambda^{2}_{n}<\infty$. 
\end{proof}

\section{Random Dropouts (stochastic sampling)}\label{sec:4}

In many applications the update is not executed at every step: samples
may be missing, sensors may fail to transmit, or one may deliberately
skip updates as a form of regularization. A simple way to model this
is to introduce a Bernoulli variable which randomly turns the update
on and off. The point here is that the basic energy identity survives
pathwise, and this leads to clean supermartingale and stability statements
with only a little extra work.

Let $H$ be a Hilbert space and let $\left\{ P_{n}\right\} _{n\ge1}$
be a sequence of orthogonal projections on $H$. Fix $\lambda\in\left(0,2\right)$
and let $\left\{ \eta_{n}\right\} _{n\ge1}$ be $\left\{ 0,1\right\} $-valued
random variables. We consider the intermittent relaxation 
\[
x_{n}=x_{n-1}-\lambda\eta_{n}P_{n}x_{n-1},\qquad n\ge1.
\]
Equivalently, with $A_{n}=I-\lambda\eta_{n}P_{n}$ we have $x_{n}=A_{n}x_{n-1}$
for all $n\ge1$.
\begin{cor}
For every realization of $\left\{ \eta_{n}\right\} $ and every $x\in H$,
\[
\left\Vert A_{n}x\right\Vert ^{2}=\left\Vert x\right\Vert ^{2}-\eta_{n}\lambda\left(2-\lambda\right)\left\Vert P_{n}x\right\Vert ^{2}.
\]
In particular $\left\Vert x_{n}\right\Vert $ is nonincreasing, and
\[
\left\Vert x_{0}\right\Vert ^{2}-\left\Vert x_{N}\right\Vert ^{2}=\sum^{N}_{n=1}\eta_{n}\lambda\left(2-\lambda\right)\left\Vert P_{n}x_{n-1}\right\Vert ^{2}
\]
for every $N\ge1$. 
\end{cor}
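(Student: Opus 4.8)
The plan is to reduce everything to the pathwise defect identity already recorded in \prettyref{exa:2-3}, applied with a relaxation parameter that is allowed to vanish. First I would fix a realization of $\left\{\eta_{n}\right\}$ — nothing probabilistic is needed at this stage — and compute the defect operator of $A_{n}=I-\lambda\eta_{n}P_{n}$ directly. Since $\eta_{n}\in\left\{0,1\right\}$ we have $\eta_{n}^{2}=\eta_{n}$, so using $P_{n}^{2}=P_{n}=P_{n}^{*}$,
\[
A_{n}^{*}A_{n}=A_{n}^{2}=\left(I-\lambda\eta_{n}P_{n}\right)^{2}=I-2\lambda\eta_{n}P_{n}+\lambda^{2}\eta_{n}^{2}P_{n}=I-\lambda\eta_{n}\left(2-\lambda\right)P_{n},
\]
whence $A_{n}$ is a contraction and $D_{A_{n}}^{2}=I-A_{n}^{*}A_{n}=\lambda\eta_{n}\left(2-\lambda\right)P_{n}$. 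Evaluating $\left\Vert A_{n}x\right\Vert ^{2}=\left\langle x,A_{n}^{*}A_{n}x\right\rangle$ then gives $\left\Vert A_{n}x\right\Vert ^{2}=\left\Vert x\right\Vert ^{2}-\eta_{n}\lambda\left(2-\lambda\right)\left\Vert P_{n}x\right\Vert ^{2}$ for every $x\in H$, which is the first assertion. (This is just the $d=1$ instance of \prettyref{lem:2-1} applied with $A_{1}=A_{n}$, $D=D_{A_{n}}$; alternatively one may cite \prettyref{prop:2-1}, since each $A_{n}$ is a contraction — it is either $I$ or $I-\lambda P_{n}$ with $0<\lambda<2$ — and $D_{A_{n}}^{2}=\lambda\eta_{n}\left(2-\lambda\right)P_{n}$.)

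For the ``in particular'' part I would substitute $x=x_{n-1}$ in the identity just proved and use $x_{n}=A_{n}x_{n-1}$, obtaining the one-step energy balance
\[
\left\Vert x_{n}\right\Vert ^{2}=\left\Vert x_{n-1}\right\Vert ^{2}-\eta_{n}\lambda\left(2-\lambda\right)\left\Vert P_{n}x_{n-1}\right\Vert ^{2}.
\]
Because $0<\lambda<2$ forces $\lambda\left(2-\lambda\right)>0$ and $\eta_{n}\ge0$, the subtracted term is nonnegative, so $\left\Vert x_{n}\right\Vert \le\left\Vert x_{n-1}\right\Vert$ for all $n$; hence $\left\Vert x_{n}\right\Vert$ is nonincreasing. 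Summing the one-step balance over $n=1,\dots,N$, the left-hand side telescopes to $\left\Vert x_{0}\right\Vert ^{2}-\left\Vert x_{N}\right\Vert ^{2}$, which yields the stated pathwise residual formula.

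There is no genuine obstacle here — the corollary is the pathwise specialization of \prettyref{prop:2-1} with $\lambda_{n}:=\lambda\eta_{n}$. The only two points that deserve a moment's care are: (i) the idempotence $\eta_{n}^{2}=\eta_{n}$, which makes the cross term in $\left(I-\lambda\eta_{n}P_{n}\right)^{2}$ collapse to $\lambda\eta_{n}\left(2-\lambda\right)P_{n}$ rather than $\lambda\eta_{n}\left(2-\lambda\eta_{n}\right)P_{n}$; and (ii) the fact that on the event $\left\{\eta_{n}=0\right\}$ the effective relaxation $\lambda_{n}=0$ lies outside the open interval $\left(0,2\right)$, so one should invoke the contraction-level \prettyref{prop:2-1} (where the factor $A_{n}=I$ is harmless) rather than \prettyref{thm:2-4}, whose hypotheses require $\lambda_{n}\in\left(0,2\right)$ strictly.
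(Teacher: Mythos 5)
Your proposal is correct and follows essentially the same route as the paper: it reduces to the defect identity of \prettyref{exa:2-3}, using $\eta_{n}^{2}=\eta_{n}$ and $P_{n}^{2}=P_{n}=P_{n}^{*}$ to get $\left(I-\lambda\eta_{n}P_{n}\right)^{2}=I-\eta_{n}\lambda\left(2-\lambda\right)P_{n}$, and then applies this with $x=x_{n-1}$ and telescopes. Your added remark about why one should fall back on \prettyref{prop:2-1} (rather than \prettyref{thm:2-4}) on the event $\eta_{n}=0$ is a careful touch, but the substance matches the paper's proof.
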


\begin{proof}
This is immediate from \prettyref{exa:2-3}, since $\eta_{n}\in\left\{ 0,1\right\} $
and 
\[
\left(I-\lambda\eta_{n}P_{n}\right)^{2}=I-\eta_{n}\lambda\left(2-\lambda\right)P_{n}.
\]
\end{proof}

These identities are deterministic in the sense that they hold pathwise.
To obtain convergence information it is convenient to place the iteration
in a filtration. Let $\left\{ \mathcal{F}_{n}\right\} _{n\ge0}$ be
a filtration such that $x_{n-1}$ is $\mathcal{F}_{n-1}$-measurable
and $\left(P_{n},\eta_{n}\right)$ is $\mathcal{F}_{n}$-measurable.
Set 
\[
p_{n}=\mathbb{E}\left[\eta_{n}\mid\mathcal{F}_{n-1}\right].
\]

\begin{prop}
With notation as above, 
\[
\mathbb{E}\left[\left\Vert x_{n}\right\Vert ^{2}\mid\mathcal{F}_{n-1}\right]=\left\Vert x_{n-1}\right\Vert ^{2}-\lambda\left(2-\lambda\right)\mathbb{E}\left[\eta_{n}\left\Vert P_{n}x_{n-1}\right\Vert ^{2}\mid\mathcal{F}_{n-1}\right].
\]
In particular $\left\{ \left\Vert x_{n}\right\Vert ^{2}\right\} $
is a nonnegative supermartingale, and 
\[
\sum_{n\ge1}\eta_{n}\lambda\left(2-\lambda\right)\left\Vert P_{n}x_{n-1}\right\Vert ^{2}<\infty
\]
almost surely. 
\end{prop}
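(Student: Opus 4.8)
The plan is to reduce everything to the pathwise energy identity established in the corollary immediately above (equivalently, to the defect identity of \prettyref{exa:2-3}), and then carry out the minimal measure-theoretic bookkeeping. First I would apply that identity, $\left\Vert A_{n}x\right\Vert ^{2}=\left\Vert x\right\Vert ^{2}-\eta_{n}\lambda\left(2-\lambda\right)\left\Vert P_{n}x\right\Vert ^{2}$, with $x=x_{n-1}$; since $x_{n}=A_{n}x_{n-1}$ this reads
\[
\left\Vert x_{n}\right\Vert ^{2}=\left\Vert x_{n-1}\right\Vert ^{2}-\eta_{n}\lambda\left(2-\lambda\right)\left\Vert P_{n}x_{n-1}\right\Vert ^{2}
\]
as an identity of nonnegative $\mathcal{F}_{n}$-measurable random variables. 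Taking $\mathbb{E}\left[\,\cdot\mid\mathcal{F}_{n-1}\right]$ and using that $x_{n-1}$ is $\mathcal{F}_{n-1}$-measurable (so $\left\Vert x_{n-1}\right\Vert ^{2}$ comes out of the conditional expectation) yields the displayed conditional identity. I would note in passing that integrability is not an issue: the same corollary gives $\left\Vert x_{n}\right\Vert \le\left\Vert x_{n-1}\right\Vert \le\cdots\le\left\Vert x_{0}\right\Vert$ pathwise, so all terms are dominated by $\left\Vert x_{0}\right\Vert ^{2}$ (which we take to lie in $L^{1}$, or to be a fixed vector).

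For the supermartingale claim, since $0<\lambda<2$ we have $\lambda\left(2-\lambda\right)>0$, while $\eta_{n}\ge0$ and $\left\Vert P_{n}x_{n-1}\right\Vert ^{2}\ge0$; hence the subtracted conditional expectation in the identity is nonnegative, so $\mathbb{E}\left[\left\Vert x_{n}\right\Vert ^{2}\mid\mathcal{F}_{n-1}\right]\le\left\Vert x_{n-1}\right\Vert ^{2}$, and $\left(\left\Vert x_{n}\right\Vert ^{2},\mathcal{F}_{n}\right)_{n\ge0}$ is a nonnegative supermartingale.

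For the almost-sure summability I would not even need a convergence theorem: telescoping the pathwise identity above from $n=1$ to $N$ gives
\[
\sum_{n=1}^{N}\eta_{n}\lambda\left(2-\lambda\right)\left\Vert P_{n}x_{n-1}\right\Vert ^{2}=\left\Vert x_{0}\right\Vert ^{2}-\left\Vert x_{N}\right\Vert ^{2}\le\left\Vert x_{0}\right\Vert ^{2},
\]
so the partial sums are nondecreasing (all summands are $\ge0$) and bounded above by $\left\Vert x_{0}\right\Vert ^{2}$, which is finite almost surely; hence the series converges a.s. Alternatively, this series is exactly the predictable increasing part in the Doob decomposition of the supermartingale from the previous step, which is finite a.s.\ by the supermartingale convergence theorem.

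The proof has no substantive obstacle; the only point requiring a little care is the measurability and integrability bookkeeping --- confirming that $x_{n}=A_{n}x_{n-1}$ and $\left\Vert P_{n}x_{n-1}\right\Vert ^{2}$ are $\mathcal{F}_{n}$-measurable (both $P_{n}$ and $x_{n-1}$ enter the latter), and that $\left\Vert x_{0}\right\Vert ^{2}$ is integrable, or at least a.s.\ finite, so that the supermartingale assertion is not vacuous.
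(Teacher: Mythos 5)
Your proposal is correct and follows essentially the same route as the paper: take conditional expectation of the pathwise defect identity using the $\mathcal{F}_{n-1}$-measurability of $x_{n-1}$, observe the dissipation term is nonnegative for the supermartingale property, and obtain almost-sure summability from the pathwise telescoping bound by $\left\Vert x_{0}\right\Vert ^{2}$. The extra bookkeeping on measurability and integrability is fine but not needed beyond what the paper's short proof already uses.
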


\begin{proof}
Take conditional expectation of the pathwise identity in the first
corollary, using that $x_{n-1}$ is $\mathcal{F}_{n-1}$-measurable.
The supermartingale property is immediate. The almost sure summability
follows from the telescoping identity and the boundedness of $\left\Vert x_{n}\right\Vert $. 
\end{proof}

To identify the limit point we impose a geometric hypothesis on the
sequence of projections. We fix a deterministic common fixed-point
subspace:
\begin{assumption}
\label{ass:M} There exists a fixed closed subspace $M\subseteq H$
such that 
\begin{equation}
P_{n}m=0\qquad\text{for all }m\in M
\end{equation}
almost surely, for every $n\ge1$.
\end{assumption}

We write $P_{M}$ for the orthogonal projection onto $M$ and $M^{\perp}$
for its orthogonal complement. 
\begin{thm}
\label{thm:4-4} Assume \prettyref{ass:M}. Suppose there exists $\beta>0$
such that for every $x\in M^{\perp}$, 
\[
\left\langle \mathbb{E}\left[\eta_{n}P_{n}\mid\mathcal{F}_{n-1}\right]x,x\right\rangle \ge\beta\left\Vert x\right\Vert ^{2}
\]
almost surely for all $n\ge1$. Then for every initial $x_{0}\in H$,
\[
\mathbb{E}\left[\left\Vert x_{n}-P_{M}x_{0}\right\Vert ^{2}\right]\le\left(1-\beta\lambda\left(2-\lambda\right)\right)^{n}\left\Vert x_{0}-P_{M}x_{0}\right\Vert ^{2},
\]
so $x_{n}\to P_{M}x_{0}$ in $L^{2}$. Moreover, 
\[
x_{n}\to P_{M}x_{0}\qquad\text{almost surely.}
\]
\end{thm}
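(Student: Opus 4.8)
The plan is to reduce the statement to the behaviour of the orthogonal component $w_n:=x_n-P_Mx_0$ inside $M^{\perp}$, and then to exploit that its squared norm already decreases \emph{pathwise}, so that the $L^{2}$ estimate (which we also record along the way) upgrades to almost sure convergence with no extra martingale machinery. First I would record the invariance coming from Assumption~\ref{ass:M}. Since $P_n$ is an orthogonal projection, $\ker P_n=(\mathrm{ran}\,P_n)^{\perp}$, and the hypothesis $M\subseteq\ker P_n$ (holding, for every $n$, on a single event of full probability) is equivalent to $\mathrm{ran}\,P_n\subseteq M^{\perp}$. Hence $P_MP_n=0$, each $A_n=I-\lambda\eta_nP_n$ fixes $M$ pointwise and maps $M^{\perp}$ into itself, and therefore $P_MA_n=P_M$. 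Applying $P_M$ to the recursion $x_n=A_nx_{n-1}$ and inducting gives $P_Mx_n=P_Mx_0$ for all $n$; consequently $w_n=P_{M^{\perp}}x_n$ lies in $M^{\perp}$ and obeys the same intermittent relaxation $w_n=(I-\lambda\eta_nP_n)w_{n-1}$ with $w_0=x_0-P_Mx_0$. The claim is then exactly $w_n\to0$, and since $P_{M^{\perp}}$ is a fixed operator, $w_{n-1}$ is $\mathcal F_{n-1}$-measurable, so the filtration hypotheses of the theorem transfer verbatim to $\{w_n\}$.

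Next I would apply the conditional energy identity established just before the theorem to the sequence $\{w_n\}$:
\[
\mathbb{E}\!\left[\left\Vert w_n\right\Vert ^{2}\mid\mathcal F_{n-1}\right]=\left\Vert w_{n-1}\right\Vert ^{2}-\lambda(2-\lambda)\,\mathbb{E}\!\left[\eta_n\left\Vert P_nw_{n-1}\right\Vert ^{2}\mid\mathcal F_{n-1}\right].
\]
Because $P_n=P_n^{*}=P_n^{2}$ and $w_{n-1}$ is $\mathcal F_{n-1}$-measurable, the last conditional expectation equals $\langle\mathbb{E}[\eta_nP_n\mid\mathcal F_{n-1}]w_{n-1},w_{n-1}\rangle$, and since $w_{n-1}\in M^{\perp}$ the spectral-gap hypothesis bounds it below by $\beta\Vert w_{n-1}\Vert^{2}$. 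Writing $\rho:=1-\beta\lambda(2-\lambda)$ this yields $\mathbb{E}[\Vert w_n\Vert^{2}\mid\mathcal F_{n-1}]\le\rho\Vert w_{n-1}\Vert^{2}$; moreover $0\le\eta_nP_n\le I$ gives $\beta\le1$, and $\lambda\in(0,2)$ gives $0<\lambda(2-\lambda)\le1$, so $\rho\in[0,1)$. Iterating conditional expectations and taking full expectation gives $\mathbb{E}\Vert x_n-P_Mx_0\Vert^{2}=\mathbb{E}\Vert w_n\Vert^{2}\le\rho^{n}\Vert x_0-P_Mx_0\Vert^{2}$, which is the asserted $L^{2}$ estimate.

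Finally, to pass to almost sure convergence I would use that the iteration is contractive along every path: by \prettyref{exa:2-3}, $\Vert w_n\Vert^{2}=\Vert w_{n-1}\Vert^{2}-\eta_n\lambda(2-\lambda)\Vert P_nw_{n-1}\Vert^{2}\le\Vert w_{n-1}\Vert^{2}$. Thus $\{\Vert w_n\Vert^{2}\}$ is a nonincreasing, nonnegative sequence on the full-probability event of Assumption~\ref{ass:M}, hence converges pointwise to some $\ell\ge0$; since it is dominated by the constant $\Vert w_0\Vert^{2}$, the monotone convergence theorem gives $\mathbb{E}[\ell]=\lim_n\mathbb{E}\Vert w_n\Vert^{2}\le\lim_n\rho^{n}\Vert w_0\Vert^{2}=0$, so $\ell=0$ almost surely. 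Therefore $\Vert x_n-P_Mx_0\Vert=\Vert w_n\Vert\to0$ a.s., completing the argument.

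There is no genuinely hard step here; the only place that needs care is the invariance bookkeeping of the first paragraph — checking that $M$ and $M^{\perp}$ are respectively fixed and invariant under every $A_n$, so that the scalar recursion for $\Vert w_n\Vert^{2}$ closes inside $M^{\perp}$ — together with the routine measurability remark that lets $w_{n-1}$ be pulled out of the conditional expectation. Once that is in place, everything reduces to the energy identity plus a one-line monotone-convergence argument, and in particular no martingale convergence theorem is needed, because $\Vert w_n\Vert^{2}$ already decreases along every path.
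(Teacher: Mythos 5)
Your argument is correct. The $L^{2}$ part coincides with the paper's: both isolate the error $e_n=w_n=x_n-P_Mx_0$, use Assumption \prettyref{ass:M} to see that it stays in $M^{\perp}$ and satisfies the same intermittent recursion, apply the conditional energy identity, pull the $\mathcal F_{n-1}$-measurable vector out to get $\langle\mathbb{E}[\eta_nP_n\mid\mathcal F_{n-1}]e_{n-1},e_{n-1}\rangle$, and invoke coercivity to obtain the geometric contraction (your explicit invariance bookkeeping $P_MA_n=P_M$, $P_Mx_n=P_Mx_0$ is a slightly cleaner rendering of the paper's claim that $e_n\in M^{\perp}$, and your remark that $\beta\le 1$ forces $1-\beta\lambda(2-\lambda)\ge 0$ is a detail the paper leaves implicit). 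Where you genuinely diverge is the almost sure statement: the paper shows $\{(1-c)^{-n}\Vert e_n\Vert^{2}\}$ is a nonnegative supermartingale and then combines $\sum_n\mathbb{E}\Vert e_n\Vert^{2}<\infty$ with Markov and Borel--Cantelli, whereas you observe that the pathwise identity of \prettyref{exa:2-3} makes $\Vert w_n\Vert^{2}$ nonincreasing along every path (on the full-probability event of \prettyref{ass:M}), so the pointwise limit $\ell$ exists deterministically and $\mathbb{E}[\ell]\le\lim_n\rho^{n}\Vert w_0\Vert^{2}=0$ by dominated convergence (or Fatou), forcing $\ell=0$ a.s. Your route is more elementary --- it needs no martingale convergence theorem and no Borel--Cantelli, only the pathwise dissipation plus the $L^{2}$ decay --- while the paper's supermartingale/Borel--Cantelli argument is the one that would survive in settings where pathwise monotonicity fails (e.g.\ with additive noise, as in \prettyref{prop:3-2}); for the noiseless theorem as stated, both are complete proofs.
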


\begin{proof}
Write $e_{n}=x_{n}-P_{M}x_{0}$. Since $P_{M}x_{0}\in M$ and $P_{n}m=0$
for all $m\in M$, we have $P_{n}e_{n-1}=P_{n}x_{n-1}$. Moreover,
$e_{n}\in M^{\perp}$ for all $n$, since $x_{n}-P_{M}x_{0}$ differs
from $x_{n}$ by an element of $M$ and each update subtracts a vector
in $\text{ran}P_{n}\subseteq M^{\perp}$. Indeed, if $y=P_{n}z$ and
$m\in M$ then 
\[
\left\langle y,m\right\rangle =\left\langle P_{n}z,m\right\rangle =\left\langle z,P_{n}m\right\rangle =0,
\]
so $\text{ran}P_{n}\subseteq M^{\perp}$.

The conditional expectation identity gives 
\[
\mathbb{E}\left[\left\Vert e_{n}\right\Vert ^{2}\mid\mathcal{F}_{n-1}\right]=\left\Vert e_{n-1}\right\Vert ^{2}-\lambda\left(2-\lambda\right)\mathbb{E}\left[\eta_{n}\left\Vert P_{n}e_{n-1}\right\Vert ^{2}\mid\mathcal{F}_{n-1}\right].
\]
Since $\left\Vert P_{n}e_{n-1}\right\Vert ^{2}=\left\langle P_{n}e_{n-1},e_{n-1}\right\rangle $
and $e_{n-1}$ is $\mathcal{F}_{n-1}$-measurable, we have 
\[
\mathbb{E}\left[\eta_{n}\left\Vert P_{n}e_{n-1}\right\Vert ^{2}\mid\mathcal{F}_{n-1}\right]=\left\langle \mathbb{E}\left[\eta_{n}P_{n}\mid\mathcal{F}_{n-1}\right]e_{n-1},e_{n-1}\right\rangle .
\]
Using the coercivity hypothesis yields 
\[
\mathbb{E}\left[\left\Vert e_{n}\right\Vert ^{2}\mid\mathcal{F}_{n-1}\right]\le\left(1-\beta\lambda\left(2-\lambda\right)\right)\left\Vert e_{n-1}\right\Vert ^{2}.
\]
Taking expectations and iterating gives 
\[
\mathbb{E}\left\Vert e_{n}\right\Vert ^{2}\le\left(1-\beta\lambda\left(2-\lambda\right)\right)^{n}\left\Vert e_{0}\right\Vert ^{2},
\]
which is the stated $L^{2}$ bound.

For almost sure convergence, set $c=\beta\lambda\left(2-\lambda\right)\in\left(0,1\right)$.
The inequality above implies that $\left\{ (1-c)^{-n}\left\Vert e_{n}\right\Vert ^{2}\right\} $
is a nonnegative supermartingale, hence it converges almost surely
to a finite random limit. In particular $\left\Vert e_{n}\right\Vert ^{2}$
converges almost surely. On the other hand, 
\[
\mathbb{E}\left\Vert e_{n}\right\Vert ^{2}\le(1-c)^{n}\left\Vert e_{0}\right\Vert ^{2}
\]
so $\sum_{n\ge1}\mathbb{E}\left\Vert e_{n}\right\Vert ^{2}<\infty$.
By Markov's inequality and the Borel-Cantelli lemma, $\left\Vert e_{n}\right\Vert \to0$
almost surely. Thus $x_{n}\to P_{M}x_{0}$ almost surely. 
\end{proof}

\begin{rem}
A common situation is that $\eta_{n}$ and $P_{n}$ are conditionally
independent given $\mathcal{F}_{n-1}$, and $p_{n}=\mathbb{E}[\eta_{n}\mid\mathcal{F}_{n-1}]\ge p>0$
almost surely, while there exists $\alpha>0$ such that 
\[
\left\langle \mathbb{E}\left[P_{n}\mid\mathcal{F}_{n-1}\right]x,x\right\rangle \ge\alpha\left\Vert x\right\Vert ^{2},\qquad x\in M^{\perp}.
\]
Then 
\[
\mathbb{E}\left[\eta_{n}P_{n}\mid\mathcal{F}_{n-1}\right]=\mathbb{E}\left[\eta_{n}\mid\mathcal{F}_{n-1}\right]\mathbb{E}\left[P_{n}\mid\mathcal{F}_{n-1}\right]
\]
and the hypotheses above imply 
\[
\left\langle \mathbb{E}\left[\eta_{n}P_{n}\mid\mathcal{F}_{n-1}\right]x,x\right\rangle \ge p\alpha\left\Vert x\right\Vert ^{2},\qquad x\in M^{\perp}.
\]
Thus \prettyref{thm:4-4} applies with $\beta=p\alpha$. 
\end{rem}

\begin{rem}
Intermittent updates do not change the basic dissipation mechanism;
they scale the effective contraction rate through the conditional
average operator $\mathbb{E}\left[\eta_{n}P_{n}\mid\mathcal{F}_{n-1}\right]$
on $M^{\perp}$. In particular, \prettyref{thm:4-4} shows that a
uniform coercivity bound on these conditional averages yields geometric
$L^{2}$ convergence and almost sure convergence to $P_{M}x_{0}$.
A deterministic analogue, closer in spirit to cyclic Kaczmarz, would
be to assume a windowed lower frame bound for the family $\left\{ P_{n}\right\} $.
We leave that case aside here. 
\end{rem}

It is convenient to record a variable-step version which combines
dropout with a predictable relaxation schedule.
\begin{thm}
\label{thm:4-7} Let $\left\{ \lambda_{n}\right\} _{n\ge1}$ be a
predictable sequence, i.e.\ $\lambda_{n}$ is $\mathcal{F}_{n-1}$-measurable
for each $n$, and assume $0<\lambda_{n}<2$ almost surely. Consider
the intermittent update 
\[
x_{n}=x_{n-1}-\lambda_{n}\eta_{n}P_{n}x_{n-1},\qquad n\ge1.
\]
Then the pathwise identity holds: 
\[
\left\Vert x_{n}\right\Vert ^{2}=\left\Vert x_{n-1}\right\Vert ^{2}-\eta_{n}\lambda_{n}\left(2-\lambda_{n}\right)\left\Vert P_{n}x_{n-1}\right\Vert ^{2}.
\]
Assume moreover that there exists $\gamma>0$ such that for every
$x\in M^{\perp}$, 
\[
\left\langle \mathbb{E}\left[\eta_{n}\lambda_{n}\left(2-\lambda_{n}\right)P_{n}\mid\mathcal{F}_{n-1}\right]x,x\right\rangle \ge\gamma\left\Vert x\right\Vert ^{2}
\]
almost surely for all $n$. Then for every $x_{0}\in H$, 
\[
\mathbb{E}\left[\left\Vert x_{n}-P_{M}x_{0}\right\Vert ^{2}\right]\le\left(1-\gamma\right)^{n}\left\Vert x_{0}-P_{M}x_{0}\right\Vert ^{2},
\]
and $x_{n}\to P_{M}x_{0}$ almost surely.
\end{thm}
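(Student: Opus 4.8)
The plan is to follow the proof of \prettyref{thm:4-4} essentially verbatim, the only change being that the step size $\lambda_{n}$ is now a predictable scalar rather than a constant; predictability is precisely what makes the conditional-expectation step go through unchanged.

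First I would establish the pathwise identity. Since $\eta_{n}\in\{0,1\}$ and $P_{n}^{2}=P_{n}=P_{n}^{*}$, the factor $A_{n}=I-\lambda_{n}\eta_{n}P_{n}$ satisfies $A_{n}^{*}A_{n}=A_{n}^{2}=I-\eta_{n}\lambda_{n}(2-\lambda_{n})P_{n}$ on every path, exactly as in \prettyref{exa:2-3}. Applying this identity to the vector $x_{n-1}$ gives $\|x_{n}\|^{2}=\langle x_{n-1},A_{n}^{*}A_{n}x_{n-1}\rangle=\|x_{n-1}\|^{2}-\eta_{n}\lambda_{n}(2-\lambda_{n})\|P_{n}x_{n-1}\|^{2}$, which is the first assertion. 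Note also that $\lambda_{n}(2-\lambda_{n})\le1$ for $\lambda_{n}\in(0,2)$, so $\eta_{n}\lambda_{n}(2-\lambda_{n})P_{n}\le I$; hence the coercivity constant automatically satisfies $\gamma\le1$ and the factor $1-\gamma$ lies in $[0,1)$, which is what makes the subsequent geometric bound meaningful.

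Next I would set $e_{n}=x_{n}-P_{M}x_{0}$ and check, exactly as in \prettyref{thm:4-4}, that $e_{0}\in M^{\perp}$, that $\text{ran}\,P_{n}\subseteq M^{\perp}$ (using $P_{n}^{*}=P_{n}$ and $P_{n}m=0$ for $m\in M$), that $P_{n}e_{n-1}=P_{n}x_{n-1}$ since $P_{M}x_{0}\in M$, and hence that $e_{n}=(I-\lambda_{n}\eta_{n}P_{n})e_{n-1}$ and $e_{n}\in M^{\perp}$ for all $n$ by induction. The pathwise identity then reads $\|e_{n}\|^{2}=\|e_{n-1}\|^{2}-\eta_{n}\lambda_{n}(2-\lambda_{n})\|P_{n}e_{n-1}\|^{2}$. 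Taking $\mathbb{E}[\,\cdot\mid\mathcal{F}_{n-1}]$ and using that $e_{n-1}$ is $\mathcal{F}_{n-1}$-measurable, I would rewrite $\eta_{n}\lambda_{n}(2-\lambda_{n})\|P_{n}e_{n-1}\|^{2}=\langle\eta_{n}\lambda_{n}(2-\lambda_{n})P_{n}e_{n-1},e_{n-1}\rangle$ and pull $e_{n-1}$ out of the conditional expectation, obtaining $\mathbb{E}[\|e_{n}\|^{2}\mid\mathcal{F}_{n-1}]=\|e_{n-1}\|^{2}-\langle\mathbb{E}[\eta_{n}\lambda_{n}(2-\lambda_{n})P_{n}\mid\mathcal{F}_{n-1}]e_{n-1},e_{n-1}\rangle$. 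The coercivity hypothesis, applicable because $e_{n-1}\in M^{\perp}$, gives $\mathbb{E}[\|e_{n}\|^{2}\mid\mathcal{F}_{n-1}]\le(1-\gamma)\|e_{n-1}\|^{2}$; taking full expectations and iterating yields the stated bound $\mathbb{E}\|e_{n}\|^{2}\le(1-\gamma)^{n}\|e_{0}\|^{2}$.

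For almost sure convergence I would argue as in \prettyref{thm:4-4}: if $\gamma=1$ the bound forces $e_{n}=0$ almost surely for $n\ge1$; if $\gamma<1$, observe that $\{(1-\gamma)^{-n}\|e_{n}\|^{2}\}$ is a nonnegative supermartingale, hence converges almost surely to a finite limit, which forces $\|e_{n}\|^{2}\to0$ almost surely; alternatively $\sum_{n}\mathbb{E}\|e_{n}\|^{2}\le\sum_{n}(1-\gamma)^{n}\|e_{0}\|^{2}<\infty$, so Markov's inequality and Borel--Cantelli give $\|e_{n}\|\to0$ almost surely. Either way $x_{n}\to P_{M}x_{0}$ almost surely. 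I do not expect a genuine obstacle; the only point requiring care is that the predictability of $\lambda_{n}$, i.e.\ its $\mathcal{F}_{n-1}$-measurability, is exactly what lets the factor $\lambda_{n}(2-\lambda_{n})$ be carried inside the conditional expectation and absorbed into the single operator $\mathbb{E}[\eta_{n}\lambda_{n}(2-\lambda_{n})P_{n}\mid\mathcal{F}_{n-1}]$ appearing in the hypothesis, together with the routine bookkeeping that $e_{n}$ never leaves $M^{\perp}$.
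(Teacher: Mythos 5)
Your proposal is correct and follows essentially the same route as the paper: the pathwise identity via the $\lambda_n\eta_n$-relaxed defect computation of \prettyref{exa:2-3}, the conditional-expectation contraction for $e_n=x_n-P_Mx_0$ using predictability of $\lambda_n$ and the coercivity hypothesis, and almost sure convergence exactly as in \prettyref{thm:4-4}. Your added bookkeeping ($e_n\in M^{\perp}$, $\gamma\le1$, the $\gamma=1$ case) is fine but not a different argument.
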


\begin{proof}
The pathwise identity is obtained by applying \prettyref{exa:2-3}
with step size $\lambda_{n}\eta_{n}$. For the contraction estimate,
let $e_{n}=x_{n}-P_{M}x_{0}$. As above, 
\[
\mathbb{E}\left[\left\Vert e_{n}\right\Vert ^{2}\mid\mathcal{F}_{n-1}\right]=\left\Vert e_{n-1}\right\Vert ^{2}-\mathbb{E}\left[\eta_{n}\lambda_{n}\left(2-\lambda_{n}\right)\left\Vert P_{n}e_{n-1}\right\Vert ^{2}\mid\mathcal{F}_{n-1}\right],
\]
and the last term equals 
\[
\left\langle \mathbb{E}\left[\eta_{n}\lambda_{n}\left(2-\lambda_{n}\right)P_{n}\mid\mathcal{F}_{n-1}\right]e_{n-1},e_{n-1}\right\rangle .
\]
Using the coercivity hypothesis yields 
\[
\mathbb{E}\left[\left\Vert e_{n}\right\Vert ^{2}\mid\mathcal{F}_{n-1}\right]\le\left(1-\gamma\right)\left\Vert e_{n-1}\right\Vert ^{2}.
\]
Iterating gives the $L^{2}$ bound. Almost sure convergence follows
as in \prettyref{thm:4-4}. 
\end{proof}

\begin{rem}
In the scalar RKHS Kaczmarz setting (see \prettyref{sec:3}), take
a reproducing kernel Hilbert space $H_{k}$ with kernel sections $k_{x}\in H_{k}$
and set 
\[
g_{n}=\frac{k_{x_{n}}}{\left\Vert k_{x_{n}}\right\Vert },\qquad P_{n}=\left|g_{n}\right\rangle \left\langle g_{n}\right|.
\]
Given data $y_{n}$, define the intermittent update 
\[
f_{n+1}=f_{n}+\lambda\eta_{n}\frac{y_{n}-f_{n}\left(x_{n}\right)}{\left\Vert k_{x_{n}}\right\Vert ^{2}}k_{x_{n}}.
\]
If $f^{*}$ is a target function and we write $e_{n}=f_{n}-f^{*}$,
then in the noiseless interpolation case $y_{n}=f^{*}\left(x_{n}\right)$
we have 
\[
e_{n+1}=\left(I-\lambda\eta_{n}P_{n}\right)e_{n},
\]
and hence the same pathwise identity holds: 
\[
\left\Vert e_{n+1}\right\Vert ^{2}_{H_{k}}=\left\Vert e_{n}\right\Vert ^{2}_{H_{k}}-\eta_{n}\lambda\left(2-\lambda\right)\left|\left\langle g_{n},e_{n}\right\rangle \right|^{2}.
\]
In the noisy model $y_{n}=f^{*}\left(x_{n}\right)+\xi_{n}$, the dropout
factor $\eta_{n}$ multiplies both the dissipative term and the noise
injection term in the corresponding conditional expectation recursion,
so intermittent updates scale the bias and variance contributions
in the usual early-stopping picture in a transparent way. 
\end{rem}

\begin{cor}
Let $H_{k}$ be a reproducing kernel Hilbert space over a separable
metric space $X$, and assume that the kernel $k$ is continuous,
so that every $f\in H_{k}$ is continuous. Let $\rho$ be a Borel
probability measure on $X$, and assume that $\left(x_{n}\right)_{n\ge1}$
are i.i.d.\ with law $\rho$. Assume that $\eta_{n}$ are i.i.d.\ $\left\{ 0,1\right\} $-valued
with $\mathbb{P}\left(\eta_{n}=1\right)=p\in\left(0,1\right)$, independent
of $\left(x_{n}\right)$. Define 
\[
g_{n}=\frac{k_{x_{n}}}{\left\Vert k_{x_{n}}\right\Vert },\qquad P_{n}=\left|g_{n}\right\rangle \left\langle g_{n}\right|.
\]
Let $S=\mathrm{supp}\left(\rho\right)$ and let 
\[
M=\left\{ f\in H_{k}:\ f\left(x\right)=0\ \text{for all }x\in S\right\} .
\]
Suppose that there exists $\alpha>0$ such that for every $f\in M^{\perp}$,
\[
\int\frac{\left|f\left(x\right)\right|^{2}}{\left\Vert k_{x}\right\Vert ^{2}}d\rho\left(x\right)\ge\alpha\left\Vert f\right\Vert ^{2}_{H_{k}}.
\]
Let $f^{*}\in H_{k}$ and consider the noiseless interpolation model
$y_{n}=f^{*}\left(x_{n}\right)$ with intermittent update 
\[
f_{n+1}=f_{n}+\lambda\eta_{n}\frac{y_{n}-f_{n}\left(x_{n}\right)}{\left\Vert k_{x_{n}}\right\Vert ^{2}}k_{x_{n}},\qquad0<\lambda<2.
\]
Then, assuming $f_{0}\in M^{\perp}$ (e.g.\ $f_{0}=0$), we have
\[
\mathbb{E}\left[\left\Vert f_{n}-P_{M^{\perp}}f^{*}\right\Vert ^{2}_{H_{k}}\right]\le\left(1-p\alpha\lambda\left(2-\lambda\right)\right)^{n}\left\Vert f_{0}-P_{M^{\perp}}f^{*}\right\Vert ^{2}_{H_{k}},
\]
so $f_{n}\to P_{M^{\perp}}f^{*}$ in $L^{2}$. 
\end{cor}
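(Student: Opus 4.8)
The plan is to derive this statement from \prettyref{thm:4-4}, applied to the error sequence $e_n := f_n - f^*$. First I would record the noiseless error recursion: in the interpolation model $y_n = f^*(x_n)$ we have $y_n - f_n(x_n) = -e_n(x_n)$, and by the reproducing property $P_n h = \langle g_n, h\rangle g_n = \bigl(h(x_n)/\|k_{x_n}\|^2\bigr)k_{x_n}$, so the given update collapses to $e_{n+1} = (I - \lambda\eta_n P_n)e_n$. Up to the harmless re-indexing needed to match the convention of \prettyref{thm:4-4}, $\{e_n\}$ is therefore exactly the intermittent relaxation of that theorem, with constant step $\lambda$, driven by the rank-one projections $P_n$ and the dropout variables $\eta_n$, and started from $e_0 = f_0 - f^*$. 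It then remains only to verify the two structural hypotheses of \prettyref{thm:4-4} --- \prettyref{ass:M} and the coercivity lower bound --- and I expect the latter to hold with $\beta = p\alpha$.

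Second, I would check \prettyref{ass:M} for the subspace $M = \{f \in H_k : f|_S = 0\}$ with $S = \mathrm{supp}(\rho)$. This $M$ is closed, being the intersection of the kernels of the bounded evaluation functionals $f \mapsto \langle k_x, f\rangle$, $x \in S$. For $m \in M$ one has $\langle g_n, m\rangle = m(x_n)/\|k_{x_n}\|$, which vanishes as soon as $x_n \in S$; and since $X$ is separable metric, hence second countable, the complement $X \setminus S$ is a countable union of $\rho$-null open sets, so $\rho(X \setminus S) = 0$ and $x_n \in S$ almost surely. Hence $P_n|_M = 0$ a.s.\ for every $n$. (Here $\|k_{x_n}\| > 0$ a.s., which is already implicit in the coercivity hypothesis being well posed.)

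Third, I would verify the coercivity hypothesis on $M^\perp$. With the natural filtration generated by the data seen so far, the step operator $I - \lambda\eta_n P_n$ depends only on $(x_n, \eta_n)$, which is independent of the past; since moreover $\eta_n$ and $x_n$ are independent, $\mathbb{E}[\eta_n P_n \mid \mathcal{F}_{n-1}] = \mathbb{E}[\eta_n P_n] = p\,\mathbb{E}[P_n]$, a deterministic operator whose quadratic form is $\langle \mathbb{E}[P_n]f, f\rangle = \mathbb{E}\bigl[|f(x_n)|^2/\|k_{x_n}\|^2\bigr] = \int |f(x)|^2/\|k_x\|^2 \, d\rho(x)$. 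The standing lower bound then gives $\langle \mathbb{E}[\eta_n P_n \mid \mathcal{F}_{n-1}]f, f\rangle \ge p\alpha\|f\|^2$ for all $f \in M^\perp$, which is precisely the coercivity hypothesis of \prettyref{thm:4-4} with $\beta = p\alpha$.

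Finally, \prettyref{thm:4-4} yields $\mathbb{E}\|e_n - P_M e_0\|^2 \le (1 - p\alpha\lambda(2-\lambda))^n \|e_0 - P_M e_0\|^2$, and I would translate this back to $f_n$. Because $f_0 \in M^\perp$ we have $P_M f_0 = 0$, hence $P_M e_0 = P_M(f_0 - f^*) = -P_M f^*$; substituting gives $e_0 - P_M e_0 = f_0 - P_{M^\perp}f^*$ and $e_n - P_M e_0 = f_n - P_{M^\perp}f^*$, which is exactly the asserted geometric bound, and since the factor $1 - p\alpha\lambda(2-\lambda)$ lies in $(0,1)$ it forces $f_n \to P_{M^\perp}f^*$ in $L^2$. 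The one genuinely non-mechanical point --- the step I expect to be the main obstacle --- is realizing that the raw error $e_n$ need \emph{not} stay in $M^\perp$, since $f^*$ may carry an $M$-component, so the quantity that actually contracts is the shifted error $e_n - P_M e_0 = f_n - P_{M^\perp}f^*$; once this is aligned with \prettyref{thm:4-4}, and once the measure-theoretic fact $\rho(\mathrm{supp}\,\rho) = 1$ is in place, the remaining work is bookkeeping already absorbed into that theorem.
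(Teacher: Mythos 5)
Your proposal is correct and follows essentially the same route as the paper: verify \prettyref{ass:M} for $M=\{f: f|_{S}=0\}$, use the i.i.d.\ structure to compute $\mathbb{E}[\eta_nP_n\mid\mathcal{F}_{n-1}]=p\,\mathbb{E}[P_1]$ with quadratic form $\int |f(x)|^2/\|k_x\|^2\,d\rho$, invoke \prettyref{thm:4-4} with $\beta=p\alpha$ for the error $e_n=f_n-f^*$, and translate back using $P_Mf_0=0$. The only (harmless) divergence is that you check $P_n|_M=0$ a.s.\ directly from $\rho(\mathrm{supp}\,\rho)=1$ via second countability --- which is in fact the inclusion actually needed --- whereas the paper argues through a.s.\ density of the sample in $\mathrm{supp}\,\rho$ and the identification $\bigcap_n\ker P_n=M$.
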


\begin{proof}
For each $n$, the kernel section satisfies $\left\langle k_{x_{n}},f\right\rangle =f\left(x_{n}\right)$,
hence 
\[
P_{n}f=\left\langle g_{n},f\right\rangle g_{n}=\frac{f\left(x_{n}\right)}{\left\Vert k_{x_{n}}\right\Vert ^{2}}k_{x_{n}}.
\]
In particular $\ker P_{n}=\left\{ f:\ f\left(x_{n}\right)=0\right\} $.
Since $k$ is continuous and $X$ is separable, the i.i.d.\ sample
$\left\{ x_{n}\right\} $ is almost surely dense in $S=\mathrm{supp}\left(\rho\right)$.
Therefore, if $f\in\bigcap_{n\ge1}\ker P_{n}$ then $f\left(x_{n}\right)=0$
for all $n$, and by continuity $f$ vanishes on the closure of $\left\{ x_{n}\right\} $,
hence on $S$. This shows that almost surely 
\[
\bigcap_{n\ge1}\ker P_{n}=M.
\]
Let $\mathcal{F}_{n}$ be the natural filtration generated by $\left\{ \left(x_{j},\eta_{j}\right)\right\} _{1\le j\le n}$.
Since $\left(x_{n}\right)$ are i.i.d., $P_{n}$ is independent of
$\mathcal{F}_{n-1}$ and has the same law as $P_{1}$, so 
\[
\mathbb{E}\left[P_{n}\mid\mathcal{F}_{n-1}\right]=\mathbb{E}\left[P_{1}\right]
\]
for all $n$. Moreover, $\eta_{n}$ is independent of $P_{n}$ and
of $\mathcal{F}_{n-1}$, hence $p_{n}=\mathbb{E}\left[\eta_{n}\mid\mathcal{F}_{n-1}\right]=p$.
A direct computation gives, for $f\in H_{k}$, 
\[
\left\langle \mathbb{E}\left[P_{1}\right]f,f\right\rangle =\mathbb{E}\left[\left|\left\langle g_{1},f\right\rangle \right|^{2}\right]=\int\frac{\left|f\left(x\right)\right|^{2}}{\left\Vert k_{x}\right\Vert ^{2}}d\rho\left(x\right).
\]
Thus the hypothesis on $\rho$ is exactly the coercivity condition
in \prettyref{thm:4-4} with $\beta=p\alpha$. Applying \prettyref{thm:4-4}
to the error $e_{n}=f_{n}-f^{*}$ gives $e_{n}\to P_{M}e_{0}=P_{M}f_{0}-P_{M}f^{*}$,
hence 
\[
f_{n}=f^{*}+e_{n}\to P_{M}f_{0}+P_{M^{\perp}}f^{*}.
\]
Under the assumption $f_{0}\in M^{\perp}$ one has $P_{M}f_{0}=0$,
which is the stated convergence to $P_{M^{\perp}}f^{*}$ in $L^{2}$.
\end{proof}

\section{Kernel reparametrization from tree-splitting}\label{sec:5}

We keep the multichannel tree-splitting setting of \prettyref{subsec:2-4}.
In particular, for each depth $N\ge1$, the splitting identity \eqref{eq:2-4}
yields a canonical coordinate map $W_{N}$ into a Hilbert direct sum
$H_{N}$, where 
\[
H_{N}:=\left(\bigoplus\nolimits_{\left|\alpha\right|=N}H\right)\oplus\left(\bigoplus\nolimits_{\left|\alpha\right|<N}H\right),
\]
equipped with the standard $\ell^{2}$ direct-sum inner product. 

We then set 
\begin{equation}
W_{N}x:=\left(\left(A_{\alpha}x\right)_{\left|\alpha\right|=N},\left(DA_{\alpha}x\right)_{\left|\alpha\right|<N}\right),\quad x\in H.\label{eq:5-1}
\end{equation}
With this choice of norm on $H_{N}$, for all $x\in H$, 
\begin{equation}
\left\Vert W_{N}x\right\Vert ^{2}=\sum_{\left|\alpha\right|=N}\left\Vert A_{\alpha}x\right\Vert ^{2}+\sum_{\left|\alpha\right|<N}\left\Vert DA_{\alpha}x\right\Vert ^{2}=\left\Vert x\right\Vert ^{2}.\label{eq:5-2}
\end{equation}
Consequently $W_{N}$ is an isometry and therefore, for all $x,y\in H$,
\begin{equation}
\left\langle x,y\right\rangle =\left\langle W_{N}x,W_{N}y\right\rangle .\label{eq:5-3}
\end{equation}
This observation \eqref{eq:5-2}-\eqref{eq:5-3} is the starting point
behind the constructions below.

Let $X$ be any set. Let $k_{0}$ be a positive definite kernel on
$X\times X$ with feature map $\Phi_{0}\colon X\to H$, so that 
\begin{equation}
k_{0}\left(x,y\right):=\left\langle \Phi_{0}\left(x\right),\Phi_{0}\left(y\right)\right\rangle .\label{eq:5-4}
\end{equation}
For each $N\ge1$, we form the multiscale feature map $\Phi_{N}:=W_{N}\circ\Phi_{0}$,
i.e., composition \eqref{eq:5-1} and $\Phi_{0}$. Since $W_{N}$
is an isometry, and using \eqref{eq:5-4} we obtain the identity 
\begin{equation}
k_{0}\left(x,y\right)=\left\langle \Phi_{N}\left(x\right),\Phi_{N}\left(y\right)\right\rangle ,\quad x,y\in X.
\end{equation}
Thus the tree coordinates give a structured representation of the
same kernel induced by $\Phi_{0}$, without modifying $k_{0}$.

\subsection*{Kernel truncation}

A family of new kernels arises once we discard (or weight) some of
the coordinates of $W_{N}$. We give one basic truncation that will
be sufficient for later use. 

Fix integers $0\le M\le N$, and define the truncated feature map
by keeping only the defect blocks up to depth $M-1$, 
\begin{equation}
\widetilde{\Phi}_{M}\left(x\right):=\left(\left(DA_{\alpha}\Phi_{0}\left(x\right)\right)_{\left|\alpha\right|<M}\right),\quad x\in X,\label{eq:5-6}
\end{equation}
viewed as an element of the corresponding direct sum of copies of
$H$. We define the associated truncated kernel by 
\begin{align}
\widetilde{k}_{M}\left(x,y\right) & :=\left\langle \widetilde{\Phi}_{M}\left(x\right),\widetilde{\Phi}_{M}\left(y\right)\right\rangle \nonumber \\
 & =\sum_{\left|\alpha\right|<M}\left\langle DA_{\alpha}\Phi_{0}\left(x\right),DA_{\alpha}\Phi_{0}\left(y\right)\right\rangle .
\end{align}
Since $\widetilde{k}_{M}$ is a Gram kernel of $\widetilde{\Phi}_{M}$,
it is positive definite.

To quantify the difference between $k_{0}$ and $\widetilde{k}_{M}$,
we introduce the discarded energy at depth $N$ beyond the truncation
level $M$, 
\begin{equation}
R_{N,M}\left(x\right):=\sum_{\left|\alpha\right|=N}\left\Vert A_{\alpha}\Phi_{0}\left(x\right)\right\Vert ^{2}+\sum_{M\le\left|\alpha\right|<N}\left\Vert DA_{\alpha}\Phi_{0}\left(x\right)\right\Vert ^{2},\quad x\in X.
\end{equation}

\begin{thm}
\label{thm:5.1}For all $x\in X$, 
\begin{equation}
k_{0}\left(x,x\right)-\widetilde{k}_{M}\left(x,x\right)=R_{N,M}\left(x\right)\ge0.\label{eq:5-9}
\end{equation}
Moreover, for all $x,y\in X$, 
\begin{equation}
\left|k_{0}\left(x,y\right)-\widetilde{k}_{M}\left(x,y\right)\right|\le\left(R_{N,M}\left(x\right)R_{N,M}\left(y\right)\right)^{1/2}.
\end{equation}
\end{thm}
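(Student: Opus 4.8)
The plan is to read \eqref{eq:5-9} and its companion bound directly off the isometry property of $W_N$ (which is \eqref{eq:5-2}, i.e.\ \prettyref{prop:2-3} applied to the vector $\Phi_0(x)$) together with the orthogonal direct-sum structure of $H_N$. I would split the coordinates of $W_N$ into the \emph{kept} block (the defect coordinates of depth $<M$, which by \eqref{eq:5-6} form precisely $\widetilde\Phi_M$) and the \emph{discarded} block, writing, for $x\in X$,
\[
\Psi_{N,M}(x):=\bigl((A_\alpha\Phi_0(x))_{|\alpha|=N},\,(DA_\alpha\Phi_0(x))_{M\le|\alpha|<N}\bigr),
\]
viewed as an element of the corresponding $\ell^2$ direct sum of copies of $H$. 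By the very definition of $R_{N,M}$ one has $\|\Psi_{N,M}(x)\|^2=R_{N,M}(x)$.

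First I would dispatch the diagonal identity. Since $\Phi_0$ is a feature map for $k_0$ (see \eqref{eq:5-4}), $k_0(x,x)=\|\Phi_0(x)\|^2$, and since $W_N$ is an isometry and the kept and discarded blocks are mutually orthogonal in $H_N$,
\[
\|\Phi_0(x)\|^2=\|W_N\Phi_0(x)\|^2=\|\widetilde\Phi_M(x)\|^2+\|\Psi_{N,M}(x)\|^2=\widetilde k_M(x,x)+R_{N,M}(x).
\]
Rearranging gives \eqref{eq:5-9}, and the inequality $R_{N,M}(x)\ge0$ is automatic, $R_{N,M}(x)$ being a sum of squared norms.

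Next I would obtain the off-diagonal bound by polarizing the same splitting. Using that $W_N$ is an isometry and then that the kept/discarded blocks are orthogonal,
\[
k_0(x,y)=\langle W_N\Phi_0(x),W_N\Phi_0(y)\rangle=\langle\widetilde\Phi_M(x),\widetilde\Phi_M(y)\rangle+\langle\Psi_{N,M}(x),\Psi_{N,M}(y)\rangle,
\]
so $k_0(x,y)-\widetilde k_M(x,y)=\langle\Psi_{N,M}(x),\Psi_{N,M}(y)\rangle$. Cauchy--Schwarz in the direct-sum space, together with $\|\Psi_{N,M}(x)\|^2=R_{N,M}(x)$ and $\|\Psi_{N,M}(y)\|^2=R_{N,M}(y)$, then yields
\[
\left|k_0(x,y)-\widetilde k_M(x,y)\right|\le\|\Psi_{N,M}(x)\|\,\|\Psi_{N,M}(y)\|=\bigl(R_{N,M}(x)R_{N,M}(y)\bigr)^{1/2}.
\]

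There is no genuinely hard step here: the argument is bookkeeping around the isometry $W_N$. The one point that deserves an explicit line is that the ``kept'' and ``discarded'' coordinate subspaces of $H_N$ are orthogonal --- which is exactly why $H_N$ was equipped with the $\ell^2$ direct-sum inner product in \eqref{eq:5-1} --- so that the cross terms drop out of $\|W_N\Phi_0(x)\|^2$ and of $\langle W_N\Phi_0(x),W_N\Phi_0(y)\rangle$. It is also worth remarking that the left-hand side of \eqref{eq:5-9} does not involve $N$, so the identity incidentally shows that $R_{N,M}(x)$ is independent of $N\ge M$, consistent with the level-energy/defect trade-off recorded in \prettyref{cor:2-4}.
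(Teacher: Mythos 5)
Your proof is correct and follows essentially the same route as the paper's: both define the discarded coordinate vector $\Psi_{N,M}(x)$, use the isometry of $W_N$ together with the orthogonality of the kept and discarded blocks in $H_{N}$, and conclude via Pythagoras for the diagonal identity and Cauchy--Schwarz for the off-diagonal bound. No gaps; your closing remark that $R_{N,M}(x)$ is independent of $N\ge M$ is a nice observation consistent with \prettyref{cor:5-2}.
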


\begin{proof}
Fix $N\ge1$ and $0\le M\le N$. Recall that 
\[
W_{N}\Phi_{0}(x)=\left(\left(A_{\alpha}\Phi_{0}(x)\right)_{\left|\alpha\right|=N},\left(DA_{\alpha}\Phi_{0}(x)\right)_{\left|\alpha\right|<N}\right)\in H_{N},
\]
and that $\widetilde{\Phi}_{M}(x)$ from \eqref{eq:5-6} consists
of the defect blocks with $\left|\alpha\right|<M$. We view $\widetilde{\Phi}_{M}(x)$
as an element of $H_{N}$ by embedding it as the subvector of $W_{N}\Phi_{0}(x)$
supported on the coordinates $\left\{ \alpha:\left|\alpha\right|<M\right\} $
in the defect part and zero elsewhere.

Define the discarded coordinate vector by 
\[
\Psi_{N,M}(x):=\left(\left(A_{\alpha}\Phi_{0}(x)\right)_{\left|\alpha\right|=N},\left(DA_{\alpha}\Phi_{0}(x)\right)_{M\le\left|\alpha\right|<N}\right)\in H_{N}.
\]
In the direct sum $H_{N}$, different coordinate blocks are mutually
orthogonal, so vectors supported on disjoint index sets are orthogonal.
By construction, the supports of $\widetilde{\Phi}_{M}(x)$ and $\Psi_{N,M}(x)$
in $H_{N}$ are disjoint; hence we have the orthogonal decomposition
\[
W_{N}\Phi_{0}(x)=\widetilde{\Phi}_{M}(x)\oplus\Psi_{N,M}(x).
\]

For the diagonal identity, take norms and use Pythagoras: 
\[
\left\Vert W_{N}\Phi_{0}(x)\right\Vert ^{2}=\Vert\widetilde{\Phi}_{M}(x)\Vert^{2}+\Vert\Psi_{N,M}(x)\Vert^{2}.
\]
Since $W_{N}$ is an isometry, $\left\Vert W_{N}\Phi_{0}(x)\right\Vert ^{2}=\left\Vert \Phi_{0}(x)\right\Vert ^{2}=k_{0}(x,x)$,
while by definition $\Vert\widetilde{\Phi}_{M}(x)\Vert^{2}=\widetilde{k}_{M}(x,x)$.
Finally, expanding the direct-sum norm of $\Psi_{N,M}(x)$ gives 
\[
\left\Vert \Psi_{N,M}(x)\right\Vert ^{2}=\sum_{\left|\alpha\right|=N}\left\Vert A_{\alpha}\Phi_{0}(x)\right\Vert ^{2}+\sum_{M\le\left|\alpha\right|<N}\left\Vert DA_{\alpha}\Phi_{0}(x)\right\Vert ^{2}=R_{N,M}(x).
\]
Substituting these identities yields \eqref{eq:5-9}, and in particular
$R_{N,M}(x)\ge0$.

For the off-diagonal bound, write the inner product using the same
decomposition: 
\begin{align*}
k_{0}(x,y) & =\left\langle \Phi_{0}(x),\Phi_{0}(y)\right\rangle =\left\langle W_{N}\Phi_{0}(x),W_{N}\Phi_{0}(y)\right\rangle \\
 & =\left\langle \widetilde{\Phi}_{M}(x),\widetilde{\Phi}_{M}(y)\right\rangle +\left\langle \Psi_{N,M}(x),\Psi_{N,M}(y)\right\rangle \\
 & =\widetilde{k}_{M}(x,y)+\left\langle \Psi_{N,M}(x),\Psi_{N,M}(y)\right\rangle .
\end{align*}
Hence 
\[
k_{0}(x,y)-\widetilde{k}_{M}(x,y)=\left\langle \Psi_{N,M}(x),\Psi_{N,M}(y)\right\rangle ,
\]
and Cauchy-Schwarz in $H_{N}$ gives 
\[
\left|k_{0}(x,y)-\widetilde{k}_{M}(x,y)\right|\le\left\Vert \Psi_{N,M}(x)\right\Vert \left\Vert \Psi_{N,M}(y)\right\Vert =\left(R_{N,M}(x)R_{N,M}(y)\right)^{1/2}.
\]
\end{proof}

In particular, any estimate on the tree energies from the previous
section (for example, the geometric decay conclusion of \prettyref{cor:2-6})
immediately yields quantitative control of $R_{N,M}\left(x\right)$,
and hence of $k_{0}-\widetilde{k}_{M}$, without additional arguments. 
\begin{cor}
\label{cor:5-2} Assume there exists a constant $c\in(0,1)$ such
that 
\[
\sum^{d}_{i=1}A^{*}_{i}A_{i}\le cI.
\]
Then for every $M\ge0$, every $N\ge M$, and every $x\in X$, one
has 
\begin{equation}
R_{N,M}\left(x\right)=\sum_{\left|\alpha\right|=M}\left\Vert A_{\alpha}\Phi_{0}\left(x\right)\right\Vert ^{2}\le c^{M}k_{0}\left(x,x\right).\label{eq:5-9-1}
\end{equation}
Consequently, 
\begin{equation}
0\le k_{0}\left(x,x\right)-\widetilde{k}_{M}\left(x,x\right)\le c^{M}k_{0}\left(x,x\right),\label{eq:5-12}
\end{equation}
and for all $x,y\in X$, 
\[
\left|k_{0}\left(x,y\right)-\widetilde{k}_{M}\left(x,y\right)\right|\le c^{M}\left(k_{0}\left(x,x\right)k_{0}\left(y,y\right)\right)^{1/2}.
\]
\end{cor}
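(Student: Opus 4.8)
The plan is to deduce everything from the tree-energy bookkeeping of \prettyref{sec:2} together with the exact decomposition in \prettyref{thm:5.1}. The first step is to recognize that the discarded energy $R_{N,M}\left(x\right)$ is in fact independent of $N$ and equals a single level energy of the vector $\Phi_{0}\left(x\right)$. Recall from the proof of \prettyref{prop:2-3} the one-step level identity $L_{n}\left(v\right)=L_{n+1}\left(v\right)+\sum_{\left|\alpha\right|=n}\left\Vert DA_{\alpha}v\right\Vert ^{2}$, valid for every $v\in H$ and every $n\ge0$. Summing it over $n=M,M+1,\dots,N-1$ telescopes to
\[
L_{M}\left(v\right)=L_{N}\left(v\right)+\sum_{M\le\left|\alpha\right|<N}\left\Vert DA_{\alpha}v\right\Vert ^{2},
\]
and with $v=\Phi_{0}\left(x\right)$ the right-hand side is precisely $R_{N,M}\left(x\right)$. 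Hence $R_{N,M}\left(x\right)=L_{M}\left(\Phi_{0}\left(x\right)\right)=\sum_{\left|\alpha\right|=M}\left\Vert A_{\alpha}\Phi_{0}\left(x\right)\right\Vert ^{2}$, which is the first equality in \eqref{eq:5-9-1}.

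The second step is the geometric bound. Under the hypothesis $\sum_{i=1}^{d}A_{i}^{*}A_{i}\le cI$, \prettyref{cor:2-6} gives $L_{M}\left(v\right)\le c^{M}\left\Vert v\right\Vert ^{2}$ for every $v\in H$. Applying this with $v=\Phi_{0}\left(x\right)$ and using $\left\Vert \Phi_{0}\left(x\right)\right\Vert ^{2}=k_{0}\left(x,x\right)$ from \eqref{eq:5-4} yields $R_{N,M}\left(x\right)\le c^{M}k_{0}\left(x,x\right)$, completing \eqref{eq:5-9-1}.

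The third step transfers the bound to the kernels via \prettyref{thm:5.1}, which already supplies the identity $k_{0}\left(x,x\right)-\widetilde{k}_{M}\left(x,x\right)=R_{N,M}\left(x\right)$ together with $R_{N,M}\left(x\right)\ge0$, as well as the Cauchy-Schwarz estimate $\left|k_{0}\left(x,y\right)-\widetilde{k}_{M}\left(x,y\right)\right|\le\left(R_{N,M}\left(x\right)R_{N,M}\left(y\right)\right)^{1/2}$. Combining these with the previous step gives $0\le k_{0}\left(x,x\right)-\widetilde{k}_{M}\left(x,x\right)\le c^{M}k_{0}\left(x,x\right)$, which is \eqref{eq:5-12}, and then
\[
\left|k_{0}\left(x,y\right)-\widetilde{k}_{M}\left(x,y\right)\right|\le\left(c^{M}k_{0}\left(x,x\right)\cdot c^{M}k_{0}\left(y,y\right)\right)^{1/2}=c^{M}\left(k_{0}\left(x,x\right)k_{0}\left(y,y\right)\right)^{1/2}.
\]

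Since every ingredient is already in place, there is no serious obstacle; the only point that deserves a moment's care is the bookkeeping in the first step, namely checking that the level-telescoping identity of \prettyref{prop:2-3}, when started at depth $M$ rather than at the root, reproduces exactly the two sums defining $R_{N,M}\left(x\right)$, so that the stated independence of $N$ is genuine rather than an artifact of the particular $N$ chosen.
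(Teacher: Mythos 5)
Your proposal is correct and follows essentially the same route as the paper's proof: telescoping the level identity from the proof of \prettyref{prop:2-3} from depth $M$ to $N-1$ to identify $R_{N,M}(x)=L_{M}(\Phi_{0}(x))$, then invoking \prettyref{cor:2-6} for the geometric bound. The paper leaves the final transfer through \prettyref{thm:5.1} implicit, which you simply spell out.
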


\begin{proof}
Fix $x\in X$ and set $v=\Phi_{0}\left(x\right)\in H$. In the notation
of \prettyref{def:2-6}, the level energies satisfy 
\[
L_{n}\left(v\right)=L_{n+1}\left(v\right)+\sum_{\left|\alpha\right|=n}\left\Vert DA_{\alpha}v\right\Vert ^{2}
\]
as shown in the proof of \prettyref{prop:2-3}. Summing this identity
from $n=M$ to $n=N-1$ yields 
\[
L_{M}\left(v\right)=L_{N}\left(v\right)+\sum_{M\le\left|\alpha\right|<N}\left\Vert DA_{\alpha}v\right\Vert ^{2}.
\]
By the definition of $L_{N}$ and the definition of $R_{N,M}$, the
right-hand side is 
\[
\sum_{\left|\alpha\right|=N}\left\Vert A_{\alpha}v\right\Vert ^{2}+\sum_{M\le\left|\alpha\right|<N}\left\Vert DA_{\alpha}v\right\Vert ^{2}=R_{N,M}\left(x\right).
\]
Hence $R_{N,M}\left(x\right)=L_{M}\left(v\right)=\sum_{\left|\alpha\right|=M}\left\Vert A_{\alpha}v\right\Vert ^{2}$,
proving the identity in \eqref{eq:5-9-1}. Under the uniform contraction
hypothesis, \prettyref{cor:2-6} gives $L_{M}\left(v\right)\le c^{M}\left\Vert v\right\Vert ^{2}$.
\end{proof}

We also include a sample level inequality for Gram matrices, which
follows from the same feature representation. Given points $x_{1},\dots,x_{m}\in X$,
let $K_{0}$ and $\widetilde{K}_{M}$ be the Gram matrices 
\[
\left(K_{0}\right)_{j\ell}=k_{0}\left(x_{j},x_{\ell}\right),\qquad(\widetilde{K}_{M})_{j\ell}=\widetilde{k}_{M}\left(x_{j},x_{\ell}\right).
\]

\begin{cor}
One has $0\leq K_{0}-\widetilde{K}_{M}$, 
\begin{align}
\mathrm{tr}\left(K_{0}-\widetilde{K}_{M}\right) & =\sum\nolimits^{m}_{j=1}R_{N,M}\left(x_{j}\right),\\
K_{0}-\widetilde{K}_{M} & \leq\left(\sum\nolimits^{m}_{j=1}R_{N,M}\left(x_{j}\right)\right)I.\label{eq:5-14}
\end{align}
\end{cor}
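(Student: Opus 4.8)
The plan is to recognize $K_{0}-\widetilde{K}_{M}$ as a Gram matrix built from the discarded coordinate vectors and then invoke two elementary facts about positive semidefinite matrices: a Gram matrix is positive semidefinite, and any $G\geq0$ satisfies $G\leq\mathrm{tr}(G)\,I$.

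First I would recall from the proof of \prettyref{thm:5.1} the discarded coordinate vectors $\Psi_{N,M}(x)\in H_{N}$ together with the identity
\[
k_{0}(x,y)-\widetilde{k}_{M}(x,y)=\left\langle \Psi_{N,M}(x),\Psi_{N,M}(y)\right\rangle ,\qquad x,y\in X,
\]
which is exactly what is established there from the orthogonal decomposition $W_{N}\Phi_{0}(x)=\widetilde{\Phi}_{M}(x)\oplus\Psi_{N,M}(x)$ in $H_{N}$. Evaluating at the sample points $x_{1},\dots,x_{m}$ shows that $K_{0}-\widetilde{K}_{M}$ is precisely the Gram matrix of the finite family $\{\Psi_{N,M}(x_{j})\}_{j=1}^{m}$; equivalently, letting $V\colon\mathbb{C}^{m}\to H_{N}$ be the linear map $Ve_{j}:=\Psi_{N,M}(x_{j})$, one has $K_{0}-\widetilde{K}_{M}=V^{*}V$ (up to the usual conjugation convention), so $0\leq K_{0}-\widetilde{K}_{M}$. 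The trace identity is then immediate: the $j$-th diagonal entry equals $\left\Vert \Psi_{N,M}(x_{j})\right\Vert ^{2}$, and by the diagonal part of \prettyref{thm:5.1} this is $R_{N,M}(x_{j})$; summing over $j$ gives $\mathrm{tr}(K_{0}-\widetilde{K}_{M})=\sum_{j=1}^{m}R_{N,M}(x_{j})$.

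For the operator bound \eqref{eq:5-14}, set $G:=K_{0}-\widetilde{K}_{M}\geq0$ and diagonalize $G=\sum_{i}\mu_{i}u_{i}u_{i}^{*}$ with $\mu_{i}\geq0$; then $\left\Vert G\right\Vert =\max_{i}\mu_{i}\leq\sum_{i}\mu_{i}=\mathrm{tr}(G)$, hence $G\leq\left\Vert G\right\Vert I\leq\mathrm{tr}(G)\,I=\bigl(\sum_{j=1}^{m}R_{N,M}(x_{j})\bigr)I$. (Alternatively, $\left\Vert V^{*}V\right\Vert =\left\Vert V\right\Vert ^{2}\leq\left\Vert V\right\Vert _{\mathrm{HS}}^{2}=\mathrm{tr}(V^{*}V)$.) I do not anticipate any genuine obstacle here; the only care needed is the bookkeeping of disjoint supports and block-orthogonality in $H_{N}$, which has already been carried out in the proof of \prettyref{thm:5.1} and can simply be quoted.
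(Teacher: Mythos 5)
Your proposal is correct and follows the paper's own argument: identify $K_{0}-\widetilde{K}_{M}$ as the Gram matrix of the discarded coordinate vectors $\Psi_{N,M}(x_{j})$ (hence positive semidefinite), read off the trace identity from the diagonal case of \prettyref{thm:5.1}, and conclude with the elementary bound $0\le G\le\mathrm{tr}(G)\,I$ for positive semidefinite $G$. The extra detail you supply (the map $V$ and the eigenvalue argument) just fleshes out the same steps the paper states tersely.
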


\begin{proof}
$K_{0}-\widetilde{K}_{M}$ is the Gram matrix of the discarded coordinates,
hence is positive semidefinite. The trace identity follows from \eqref{eq:5-9}
above. The final bound is the elementary implication $0\leq M\leq\mathrm{tr}\left(M\right)I$
for positive semidefinite matrices $M$. 
\end{proof}

\begin{rem}
The construction is useful even though it does not change the base
kernel $k_{0}$ until one truncates. In many situations one wants
to work with a simpler or cheaper feature map than $\Phi_{0}$, but
still remain close to the behaviour of the original kernel. The multiscale
maps $\Phi_{N}$ and their truncations $\widetilde{\Phi}_{M}$ organize
the information in $\Phi_{0}$ by depth, and the discarded quantity
$R_{N,M}\left(x\right)$ measures exactly how much of $\Phi_{0}\left(x\right)$
has been dropped at level $N$ beyond depth $M$. The identities in
this section show that $R_{N,M}\left(x\right)$ is the difference
$k_{0}\left(x,x\right)-\widetilde{k}_{M}\left(x,x\right)$, and that
it bounds $|k_{0}\left(x,y\right)-\widetilde{k}_{M}\left(x,y\right)|$
for all $x,y$. On a finite sample $\left\{ x_{j}\right\} $, the
Gram matrices satisfy \eqref{eq:5-14}, so the eigenvalues of $\widetilde{K}_{M}$
stay uniformly close to those of $K_{0}$ whenever the discarded energies
$R_{N,M}\left(x_{j}\right)$ are small. In practice this gives a principled
way to choose the truncation depth $M$ (or to choose which nodes
to keep): one can aim for a reduction in complexity while still keeping
quantitative control on how far the truncated kernel is from the original
one. 
\end{rem}

\section{Greedy refinement and kernel compression}\label{sec:6}

We continue in the setting of \prettyref{sec:5}. Fix a depth $N\ge1$
once and for all. Write 
\begin{equation}
\mathcal{A}_{<N}:=\left\{ \alpha:\left|\alpha\right|<N\right\} 
\end{equation}
for the set of nodes strictly above level $N$ (including the empty
word $\emptyset$), and for each $\alpha\in\mathcal{A}_{<N}$ write
$K_{\alpha}$ for the Gram kernel induced by the $\alpha$-defect
block, 
\[
k_{\alpha}\left(x,y\right):=\left\langle DA_{\alpha}\Phi_{0}\left(x\right),DA_{\alpha}\Phi_{0}\left(y\right)\right\rangle ,\quad x,y\in X.
\]
Thus $k_{\alpha}$ is positive definite and for every finite sample
$\left\{ x_{1},\dots,x_{m}\right\} $ its Gram matrix 
\[
\left(K_{\alpha}\right)_{j\ell}=k_{\alpha}\left(x_{j},x_{\ell}\right)
\]
is positive semidefinite.

As in \prettyref{sec:5}, the depth-$M$ truncation corresponds to
keeping the defect blocks on the prefix set $\left\{ \alpha:\left|\alpha\right|<M\right\} $.
For adaptive refinement it is convenient to allow more general prefix
sets.
\begin{defn}
A subset $S\subseteq\mathcal{A}_{<N}$ is called \emph{prefix closed}
if $\alpha\in S$ and $\alpha\neq\emptyset$ implies $\alpha^{-}\in S$,
where $\alpha^{-}$ denotes the parent of $\alpha$ (delete the last
symbol).
\end{defn}

Given such an $S$, define the truncated feature map 
\begin{equation}
\Phi_{S}\left(x\right):=\left(\left(DA_{\alpha}\Phi_{0}\left(x\right)\right)_{\alpha\in S}\right),\qquad x\in X,
\end{equation}
viewed in the Hilbert direct sum of copies of $H$ indexed by $S$,
and define its Gram kernel 
\begin{align}
k_{S}\left(x,y\right) & :=\left\langle \Phi_{S}\left(x\right),\Phi_{S}\left(y\right)\right\rangle \nonumber \\
 & =\sum_{\alpha\in S}\left\langle DA_{\alpha}\Phi_{0}\left(x\right),DA_{\alpha}\Phi_{0}\left(y\right)\right\rangle .
\end{align}

Since $k_{S}$ is a Gram kernel, it is positive definite. The residual
energy beyond $S$ at depth $N$ is defined by 
\begin{equation}
R_{N,S}\left(x\right):=\sum_{\left|\alpha\right|=N}\left\Vert A_{\alpha}\Phi_{0}\left(x\right)\right\Vert ^{2}+\sum_{\alpha\in\mathcal{A}_{<N}\setminus S}\left\Vert DA_{\alpha}\Phi_{0}\left(x\right)\right\Vert ^{2},\qquad x\in X.
\end{equation}
When $S=\left\{ \alpha:\left|\alpha\right|<M\right\} $ this is exactly
the quantity $R_{N,M}$ from \prettyref{sec:5}. The same orthogonal
splitting argument as in \prettyref{sec:5} gives the following extension.
\begin{cor}
\label{cor:6-1} For all $x\in X$ one has 
\begin{equation}
k_{0}\left(x,x\right)-k_{S}\left(x,x\right)=R_{N,S}\left(x\right)\geq0.
\end{equation}
Moreover, for all $x,y\in X$, 
\begin{equation}
\left|k_{0}\left(x,y\right)-k_{S}\left(x,y\right)\right|\le\left(R_{N,S}\left(x\right)R_{N,S}\left(y\right)\right)^{1/2}.
\end{equation}
\end{cor}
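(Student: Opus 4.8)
The plan is to run the same orthogonal-splitting argument that proved \prettyref{thm:5.1}, now partitioning the defect index set $\mathcal{A}_{<N}$ into the chosen subset $S$ and its complement. Fix $x\in X$ and abbreviate $v=\Phi_0(x)\in H$. Recall from \prettyref{prop:2-3} and \eqref{eq:5-2} that the coordinate map
\[
W_N\colon H\to H_N,\qquad W_N v=\left((A_\alpha v)_{|\alpha|=N},\,(DA_\alpha v)_{|\alpha|<N}\right),
\]
is an isometry, and that $H_N$ carries the $\ell^2$ direct-sum inner product, so that vectors supported on disjoint coordinate blocks are orthogonal; in particular the polarization identity \eqref{eq:5-3} is available.

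First I would split $W_N v$ along the partition $\mathcal{A}_{<N}=S\cup(\mathcal{A}_{<N}\setminus S)$ of its defect coordinates. Define the discarded vector
\[
\Psi_{N,S}(x):=\left((A_\alpha v)_{|\alpha|=N},\,(DA_\alpha v)_{\alpha\in\mathcal{A}_{<N}\setminus S}\right)\in H_N,
\]
and view $\Phi_S(x)=(DA_\alpha v)_{\alpha\in S}$ as an element of $H_N$ by placing it on the defect coordinates indexed by $S$ and zero elsewhere. The supports of $\Phi_S(x)$ and $\Psi_{N,S}(x)$ in $H_N$ are disjoint and together exhaust all coordinates of $W_N v$, so one obtains the orthogonal decomposition $W_N\Phi_0(x)=\Phi_S(x)\oplus\Psi_{N,S}(x)$; expanding the direct-sum norm of the second summand gives precisely $\|\Psi_{N,S}(x)\|^2=R_{N,S}(x)$ by the definition of $R_{N,S}$.

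Next I would take norms, using that $W_N$ is an isometry and that $\|\Phi_0(x)\|^2=k_0(x,x)$, $\|\Phi_S(x)\|^2=k_S(x,x)$:
\[
k_0(x,x)=\|W_N\Phi_0(x)\|^2=\|\Phi_S(x)\|^2+\|\Psi_{N,S}(x)\|^2=k_S(x,x)+R_{N,S}(x),
\]
which is the diagonal identity, and it forces $R_{N,S}(x)\ge0$. For the off-diagonal bound I would polarize the same decomposition, getting $k_0(x,y)=\langle W_N\Phi_0(x),W_N\Phi_0(y)\rangle=k_S(x,y)+\langle\Psi_{N,S}(x),\Psi_{N,S}(y)\rangle$, hence $k_0(x,y)-k_S(x,y)=\langle\Psi_{N,S}(x),\Psi_{N,S}(y)\rangle$, and Cauchy--Schwarz in $H_N$ yields $|k_0(x,y)-k_S(x,y)|\le\|\Psi_{N,S}(x)\|\,\|\Psi_{N,S}(y)\|=(R_{N,S}(x)R_{N,S}(y))^{1/2}$.

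I do not expect a genuine obstacle; the only point requiring care is the index bookkeeping, i.e.\ checking that the coordinates retained in $\Phi_S$ and those collected in $\Psi_{N,S}$ are disjoint and jointly exhaust $W_N v$. I would also note in passing that prefix-closedness of $S$ plays no role in this corollary: the decomposition and both estimates hold for an arbitrary subset $S\subseteq\mathcal{A}_{<N}$, that hypothesis being relevant only to the greedy-refinement constructions that follow.
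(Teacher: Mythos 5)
Your argument is exactly the paper's: Corollary \ref{cor:6-1} is proved there by the same orthogonal-splitting decomposition of $W_N\Phi_0(x)$ into the retained blocks $\Phi_S(x)$ and the discarded vector $\Psi_{N,S}(x)$, with Pythagoras giving the diagonal identity and Cauchy--Schwarz the off-diagonal bound, just as in Theorem \ref{thm:5.1}. Your side remark that prefix-closedness of $S$ is not needed here is also correct and consistent with the paper's usage.
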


On a finite sample $\left\{ x_{1},\dots,x_{m}\right\} $ we denote
by $K_{S}$ the Gram matrix of $k_{S}$ and by $K_{0}$ the Gram matrix
of $k_{0}$. Then $K_{S}=\sum_{\alpha\in S}K_{\alpha}$ and, exactly
as before, $K_{0}-K_{S}$ is the Gram matrix of the discarded coordinates
and hence positive semidefinite. In particular, 
\begin{gather*}
0\leq K_{0}-K_{S}\leq\mathrm{tr}\left(K_{0}-K_{S}\right)I\\
\mathrm{tr}\left(K_{0}-K_{S}\right)=\sum^{m}_{j=1}R_{N,S}\left(x_{j}\right).
\end{gather*}
This trace quantity will be the main error measure in the greedy compression
scheme below.

We next describe a simple refinement rule on the tree. Fix a sample
$\left\{ x_{1},\dots,x_{m}\right\} \subset X$. For each node $\alpha\in\mathcal{A}_{<N}$
define its sample energy 
\[
E\left(\alpha\right):=\mathrm{tr}\left(K_{\alpha}\right)=\sum^{m}_{j=1}\left\Vert DA_{\alpha}\Phi_{0}\left(x_{j}\right)\right\Vert ^{2}.
\]
If $S\subseteq\mathcal{A}_{<N}$ is prefix closed, its \emph{frontier}
is 
\[
\partial S:=\left\{ \beta\in\mathcal{A}_{<N}\setminus S:\beta^{-}\in S\right\} .
\]
The greedy rule is:
\begin{lyxalgorithm*}
\emph{Start with a prefix closed set $S_{0}$ (for instance $S_{0}=\left\{ \emptyset\right\} $).
Given $S_{t}$, choose} 
\[
\alpha_{t}\in\arg\max_{\beta\in\partial S_{t}}E\left(\beta\right),
\]
\emph{and set $S_{t+1}:=S_{t}\cup\left\{ \alpha_{t}\right\} $.} 
\end{lyxalgorithm*}
\noindent Each step adds one defect block and preserves prefix closedness.
The next proposition gives the basic monotonicity property and identifies
the exact decrease of the trace error.
\begin{prop}
\label{prop:6-2} For every $t$ one has 
\[
K_{S_{t}}\leq K_{S_{t+1}}\leq K_{0},
\]
and 
\[
\mathrm{tr}\left(K_{0}-K_{S_{t+1}}\right)=\mathrm{tr}\left(K_{0}-K_{S_{t}}\right)-E\left(\alpha_{t}\right).
\]
In particular, $\mathrm{tr}\left(K_{0}-K_{S_{t}}\right)$ is nonincreasing
in $t$ and provides an a posteriori stopping criterion. 
\end{prop}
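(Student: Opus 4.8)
The plan is to reduce the whole statement to the additive structure $K_S=\sum_{\alpha\in S}K_\alpha$ recorded just before the proposition, together with two facts already available: each block Gram matrix $K_\alpha$ is positive semidefinite, and $K_0-K_S\ge 0$ whenever $S$ is prefix closed (it is the Gram matrix of the discarded coordinates, by the orthogonal splitting of $W_N\Phi_0$ used in \prettyref{sec:5} and \prettyref{cor:6-1}).

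First I would check the combinatorics of one greedy step. Since $\alpha_t\in\partial S_t\subseteq\mathcal{A}_{<N}\setminus S_t$, we have $\alpha_t\notin S_t$, so the index set of $S_{t+1}=S_t\cup\{\alpha_t\}$ is the disjoint union of that of $S_t$ with $\{\alpha_t\}$; moreover $\alpha_t^-\in S_t$, so $S_{t+1}$ is again prefix closed. Additivity of $K_{(\cdot)}$ over this disjoint index set then gives the exact one-step update $K_{S_{t+1}}=K_{S_t}+K_{\alpha_t}$. Because $K_{\alpha_t}\ge 0$, the left inequality $K_{S_t}\le K_{S_{t+1}}$ is immediate, and the right inequality $K_{S_{t+1}}\le K_0$ is precisely $K_0-K_{S_{t+1}}\ge 0$, which holds since $S_{t+1}$ is prefix closed (invoke \prettyref{cor:6-1} at the sample $\{x_1,\dots,x_m\}$, or equivalently the Gram-matrix-of-discarded-coordinates remark preceding the proposition).

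For the trace identity I would take the trace of $K_{S_{t+1}}-K_{S_t}=K_{\alpha_t}$ and use $\mathrm{tr}(K_{\alpha_t})=E(\alpha_t)$ by the definition of the sample energy, obtaining $\mathrm{tr}(K_0-K_{S_{t+1}})=\mathrm{tr}(K_0-K_{S_t})-E(\alpha_t)$. Finally, $E(\alpha_t)=\mathrm{tr}(K_{\alpha_t})\ge 0$ since $K_{\alpha_t}\ge 0$, so $t\mapsto\mathrm{tr}(K_0-K_{S_t})$ is nonincreasing; since this quantity also equals $\sum_{j=1}^m R_{N,S_t}(x_j)\ge 0$ and is computable from the retained blocks, one stops once it falls below a prescribed tolerance, which is the asserted a posteriori criterion.

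\textbf{Main obstacle.} There is no substantive difficulty here: the proposition is bookkeeping layered on the positive-semidefiniteness facts already proved. The only points that need a moment's care are confirming that appending a frontier node preserves prefix closedness (so that $K_0-K_{S_{t+1}}\ge 0$ may legitimately be invoked) and that the index sets are genuinely disjoint, so that the one-step update is an exact equality $K_{S_{t+1}}=K_{S_t}+K_{\alpha_t}$ rather than merely an inequality.
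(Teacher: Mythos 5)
Your argument is correct and is essentially the paper's own proof: both rest on the exact one-step update $K_{S_{t+1}}=K_{S_t}+K_{\alpha_t}$ with $K_{\alpha_t}\ge 0$, the positive semidefiniteness of $K_0-K_S$ as the Gram matrix of discarded coordinates, and taking traces with $\mathrm{tr}(K_{\alpha_t})=E(\alpha_t)$. Your explicit check that adding a frontier node preserves prefix closedness and disjointness merely spells out what the paper records in the sentence preceding the algorithm.
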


\begin{proof}
Since $K_{S_{t+1}}=K_{S_{t}}+K_{\alpha_{t}}$ and $K_{\alpha_{t}}\geq0$,
one has $K_{S_{t}}\leq K_{S_{t+1}}$. Also $K_{0}-K_{S_{t}}$ is the
Gram matrix of discarded coordinates and hence is positive semidefinite,
so $K_{S_{t}}\leq K_{0}$. Taking traces gives 
\[
\mathrm{tr}\left(K_{0}-K_{S_{t+1}}\right)=\mathrm{tr}\left(K_{0}-K_{S_{t}}-K_{\alpha_{t}}\right)=\mathrm{tr}\left(K_{0}-K_{S_{t}}\right)-\mathrm{tr}\left(K_{\alpha_{t}}\right),
\]
and $\mathrm{tr}\left(K_{\alpha_{t}}\right)=E\left(\alpha_{t}\right)$
by definition. 
\end{proof}

\subsection*{Kernel learning}

We next discuss the consequence most directly aligned with standard
kernel learning. Let $y\in\mathbb{R}^{m}$ be labels on the sample
and fix $\lambda>0$. For a Gram matrix $K$ define the kernel ridge
predictor on the sample by 
\[
\widehat{y}\left(K\right):=K\left(K+\lambda I\right)^{-1}y\in\mathbb{R}^{m}.
\]
The next proposition shows that the trace bound controls the error
incurred by replacing the full kernel by a greedy truncation.
\begin{thm}
\label{thm:6-3} For every prefix closed $S\subseteq\mathcal{A}_{<N}$
one has 
\begin{equation}
\left\Vert \widehat{y}\left(K_{0}\right)-\widehat{y}\left(K_{S}\right)\right\Vert \le\frac{1}{\lambda}\left\Vert K_{0}-K_{S}\right\Vert \left\Vert y\right\Vert \le\frac{1}{\lambda}\mathrm{tr}\left(K_{0}-K_{S}\right)\left\Vert y\right\Vert .
\end{equation}
In particular, along the greedy sequence $\left(S_{t}\right)$ one
has 
\begin{equation}
\left\Vert \widehat{y}\left(K_{0}\right)-\widehat{y}\left(K_{S_{t}}\right)\right\Vert \le\frac{1}{\lambda}\mathrm{tr}\left(K_{0}-K_{S_{t}}\right)\left\Vert y\right\Vert =\frac{1}{\lambda}\sum^{m}_{j=1}R_{N,S_{t}}\left(x_{j}\right)\left\Vert y\right\Vert .
\end{equation}
\end{thm}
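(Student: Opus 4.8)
The plan is to reduce everything to the second resolvent identity. First I would rewrite the ridge map in the equivalent form $\widehat{y}(K)=K(K+\lambda I)^{-1}y=\bigl(I-\lambda(K+\lambda I)^{-1}\bigr)y$, which is legitimate since $\lambda>0$ and $K\ge 0$ make $K+\lambda I\ge\lambda I$ invertible. Subtracting, the identity terms cancel and we get
\[
\widehat{y}(K_{0})-\widehat{y}(K_{S})=\lambda\bigl((K_{S}+\lambda I)^{-1}-(K_{0}+\lambda I)^{-1}\bigr)y.
\]

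Next I would apply $A^{-1}-B^{-1}=A^{-1}(B-A)B^{-1}$ with $A=K_{S}+\lambda I$ and $B=K_{0}+\lambda I$, so $B-A=K_{0}-K_{S}$, giving
\[
\widehat{y}(K_{0})-\widehat{y}(K_{S})=\lambda\,(K_{S}+\lambda I)^{-1}(K_{0}-K_{S})(K_{0}+\lambda I)^{-1}y.
\]
Taking the Euclidean norm and using submultiplicativity of the operator norm, together with the uniform resolvent bounds $\left\Vert (K_{S}+\lambda I)^{-1}\right\Vert \le\lambda^{-1}$ and $\left\Vert (K_{0}+\lambda I)^{-1}\right\Vert \le\lambda^{-1}$ (valid because $K_{S},K_{0}\ge 0$), two factors $\lambda^{-1}$ cancel the prefactor $\lambda$ and leave one, yielding the first claimed bound $\lambda^{-1}\left\Vert K_{0}-K_{S}\right\Vert \left\Vert y\right\Vert $.

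For the second inequality I would invoke the elementary spectral fact that for a positive semidefinite matrix $M$ one has $\left\Vert M\right\Vert =\lambda_{\max}(M)\le\sum_{j}\lambda_{j}(M)=\mathrm{tr}(M)$, applied to $M=K_{0}-K_{S}$, which is positive semidefinite since it is the Gram matrix of the discarded coordinates (this was already recorded before \prettyref{prop:6-2}). Finally, specializing $S=S_{t}$ along the greedy sequence and substituting the trace identity $\mathrm{tr}(K_{0}-K_{S_{t}})=\sum_{j=1}^{m}R_{N,S_{t}}(x_{j})$ established in \prettyref{sec:6} gives the last displayed estimate.

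There is no serious obstacle here: the argument is a two-line resolvent computation followed by two standard norm estimates. The only points that require a little care are that the resolvent manipulations are purely algebraic and use no commutativity, so the order of the three factors $(K_{S}+\lambda I)^{-1}$, $(K_{0}-K_{S})$, $(K_{0}+\lambda I)^{-1}$ must be preserved, and that the bound $\left\Vert (K+\lambda I)^{-1}\right\Vert \le\lambda^{-1}$ uses only $K\ge 0$, which holds for every Gram matrix occurring in the statement. (One could alternatively telescope $t\mapsto K(K+\lambda I)^{-1}$ directly, but the resolvent route is the cleanest.)
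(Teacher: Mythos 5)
Your argument is correct and is essentially identical to the paper's proof: the same rewriting $\widehat{y}(K)=\bigl(I-\lambda(K+\lambda I)^{-1}\bigr)y$, the same resolvent identity $(K_{S}+\lambda I)^{-1}-(K_{0}+\lambda I)^{-1}=(K_{S}+\lambda I)^{-1}(K_{0}-K_{S})(K_{0}+\lambda I)^{-1}$, the resolvent bounds $\lambda^{-1}$, and the estimate $\Vert K_{0}-K_{S}\Vert\le\mathrm{tr}(K_{0}-K_{S})$ for the positive semidefinite difference, followed by the trace identity for $R_{N,S_{t}}$. No gaps; nothing further is needed.
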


\begin{proof}
Using $\widehat{y}\left(K\right)=(I-\lambda\left(K+\lambda I\right)^{-1})y$,
one obtains 
\[
\widehat{y}\left(K_{0}\right)-\widehat{y}\left(K_{S}\right)=\lambda\left(\left(K_{S}+\lambda I\right)^{-1}-\left(K_{0}+\lambda I\right)^{-1}\right)y.
\]
The resolvent identity gives 
\[
\left(K_{S}+\lambda I\right)^{-1}-\left(K_{0}+\lambda I\right)^{-1}=\left(K_{S}+\lambda I\right)^{-1}\left(K_{0}-K_{S}\right)\left(K_{0}+\lambda I\right)^{-1}.
\]
Since $K_{0}\geq0$ and $K_{S}\geq0$, one has 
\[
\Vert\left(K_{0}+\lambda I\right)^{-1}\Vert\le\frac{1}{\lambda},\qquad\Vert\left(K_{S}+\lambda I\right)^{-1}\Vert\le\frac{1}{\lambda},
\]
hence 
\[
\left\Vert \widehat{y}\left(K_{0}\right)-\widehat{y}\left(K_{S}\right)\right\Vert \le\lambda\cdot\frac{1}{\lambda}\left\Vert K_{0}-K_{S}\right\Vert \cdot\frac{1}{\lambda}\left\Vert y\right\Vert =\frac{1}{\lambda}\left\Vert K_{0}-K_{S}\right\Vert \left\Vert y\right\Vert .
\]
Finally, since $K_{0}-K_{S}\geq0$, the elementary implication $0\leq M\leq\mathrm{tr}\left(M\right)I$
yields 
\[
\left\Vert K_{0}-K_{S}\right\Vert \le\mathrm{tr}\left(K_{0}-K_{S}\right),
\]
and the stated trace formula follows from the definitions of $R_{N,S}$. 
\end{proof}

\begin{rem}
\label{rem:6-4} \prettyref{thm:6-3} suggests a concrete stopping
rule. If one prescribes a tolerance $\varepsilon>0$ and chooses $t$
so that 
\[
\sum^{m}_{j=1}R_{N,S_{t}}\left(x_{j}\right)\le\varepsilon\lambda,
\]
then the sample-level ridge predictor satisfies 
\[
\left\Vert \widehat{y}\left(K_{0}\right)-\widehat{y}\left(K_{S_{t}}\right)\right\Vert \le\varepsilon\left\Vert y\right\Vert .
\]
In this sense the discarded tree energy provides a quantitative bound
on how far the compressed kernel deviates, at the level of the induced
learning map $y\mapsto\widehat{y}\left(K\right)$. 
\end{rem}

\begin{rem}
One can consider other choices of greedy score. The scheme above is
driven by the trace bound, because it is directly computable from
the block energies $\left\Vert DA_{\alpha}\Phi_{0}\left(x_{j}\right)\right\Vert ^{2}$
and yields a clean stability estimate. In applications where one wishes
to prioritize conditioning or information gain rather than trace error,
one may instead use the same prefix-closed exploration but score candidates
$\beta\in\partial S$ by a matrix functional of the update $K_{\beta}$,
for instance through 
\[
S\mapsto\log\det\left(I+\frac{1}{\lambda}K_{S}\right).
\]
Such choices can be analyzed once one has additional structure on
the block matrices $K_{\alpha}$ on the given sample (for example,
uniformly low rank, or an efficiently maintainable factorization).
We do not follow this direction here, since the trace-based scheme
already produces an explicit, verifiable bound in terms of the discarded
energies $R_{N,S}\left(x_{j}\right)$.
\end{rem}

\section{Connections to Greedy Kernal Principal Component Analysis}\label{sec:7}

Below we give implications of our results above for Principal Component
Analysis (PCA), extending our earlier results. PCA is one of the current
and important applications of kernel analysis. We recall that PCA
offers powerful tools for dimensionality reduction, thus helping transforms
high-dimensional data into a smaller set of uncorrelated variables,
known as principal components (PCs). This is especially useful since
it can be done, retaining maximum variance (information), i.e., choices
of PCs. Thus by identifying orthogonal axes of maximum variance, PCA
allows us to 1) reduces noise, 2) handles multicollinearity, and 3)
facilitate feature selection and visualization. For the benefit of
readers we offer the following citations \cite{Belkin03,MR4616401,MR3857315,MR3883202,4106847,MR2247587,MR3005666,MR2177937,MR3854652,MR3534893}.
In recent years, the subject of kernel-principal component analysis,
and its applications, has been extensively studied, and made progress
in diverse directions \cite{MR3780557,MR3820672,MR3911884,MR1720704,MR3857315,MR3878657,MR3913046,MR3934645,MR3850675,MR3883202,MR3900802,MR3922239}. 

Here, we will tie the classical principal component analysis (PCA)
with our results in earlier sections to greedy KPCA. Please see \cite{JoSo07a,Jorgensen2022InfinitedimensionalST}
for classical PCA and \cite{pmlr-vR5-ouimet05a,WENZEL2021105508,2009SPIE.7495E..30L,DING20101542,Franc2006}
for greedy KPCA.

The word ``greedy'' in GKPCA refers to the use ``greedy'' for
design of algorithmic techniques that are used in the selection (small)
representative subset of the training data, rather than the entire
dataset, going into the particular kernel model for the problem at
hand; hence the greedy algorithms achieve a reduction of the size
of the kernel matrices that are used in computations in feature space.
Applications are many and diverse, and the list includes: Face Recognition,
Speaker Identification (nonlinear features in noisy environments),
Biomedical Signal Processing, and Classification (used in training
efficiency of classifiers like Support Vector Machines (SVM).)

Above \prettyref{sec:3} gives interpolation result but also a contractive
operator flow in a RKHS with exact energy bookkeeping and residual
decomposition given by rank-one projections. This is precisely the
structure underlying in principal component analysis and kernel principal
component analysis. While PCA is a special case where the projections
are chosen to maximize variance while the framework fixes the projections
and optimizes the residual energy. Our framework in \prettyref{sec:2},
\prettyref{sec:3} and \prettyref{sec:5} provide convergence conditions,
residual energy decomposition and stability under noise \prettyref{prop:3-2}
which is stronger than classical PCA/KPCA \cite{pmlr-vR5-ouimet05a,WENZEL2021105508,2009SPIE.7495E..30L,DING20101542,Franc2006}.

We begin by defining the greedy kernel principal component analysis
(KPCA).
\begin{defn}
\cite{pmlr-vR5-ouimet05a} Let $X=\{x_{1}\cdots,x_{m}\}$ be a training
set in input space. Let $H_{k}$ be an RKHS of complex-valued functions
on $X$ with reproducing kernel $k$. For $x_{n}\in X$ the kernel
section at $x_{n}$ is $k_{x_{n}}:=k\left(\cdot,x_{n}\right)\in H_{k}$,
where $g_{n}:=\frac{k_{x_{n}}}{\sqrt{k\left(x_{n},x_{n}\right)}}$
is the normalized kernel feature. The rank-one RKHS projection is
defined by 
\[
P_{n}f=\frac{f\left(x_{n}\right)}{k\left(x_{n},x_{n}\right)}k_{x_{n}}.
\]
\end{defn}

\begin{lyxalgorithm}
\cite{pmlr-vR5-ouimet05a} (Greedy Subset Selection) The greedy KPCA
builds a dictionary $D=\{d_{1}\cdots,d_{l}\}\subseteq X$ of training
points selected greedily based on how well a data model is approximated
by the span of already-selected dictionary elements.

At iteration $n$, with dictionary $D_{n-1}=\{d_{1}\cdots,d_{n-1}\}$,
compute the following two for the next training point $x_{n}$:

1. The feature-space projection of of $k_{x_{n}}$, onto the span
of dictionary features $\{\phi(d_{i})\}$,

2. The residual norm in feature space:
\[
\delta_{n}:=\min_{a\in R^{n-1}}\Vert\phi(x_{n})-\sum^{n-1}_{i=1}a_{i}\phi(d_{i})\Vert^{2}
\]
where $\phi(x_{n})=k_{x_{n}}$is the RKHS feature map. 

The greedy rule selects the next training point $x_{n}$ into the
dictionary if $\delta_{n}>\epsilon$ , then $d_{n}=x_{n}$, otherwise
$x_{n}$ is not added thus skipped.
\end{lyxalgorithm}

This criterion greedily increases the span in RKHS that best approximates
the full dataset’s feature map, choosing points that are most distant
in residual norm from the current subspace in feature space.
\begin{lyxalgorithm}
\cite{pmlr-vR5-ouimet05a} (Greedy KPCA Subspace Approximation) From
the dictionary $D=\{d_{1}\cdots,d_{n}\}\subseteq X$ built above:

1. Form the dictionary Gram matrix $G\in\mathbb{R}^{n\times n}$ with
\[
G_{ij}=k(d_{j},d_{j}).
\]

2. Compute the eigenvalues $\lambda_{1}\geq\cdots\geq\lambda_{p}$
of the Gram matrix, $G$ and the corresponding eigenvectors $v_{1},\cdots,v_{p}$
.

3. Construct the approximate KPCA components:
\[
u^{(D)}_{j}=\sum^{n}_{i=1}\left(v_{j}\right)_{i}k_{d_{i}},\quad where\quad j=1,\cdots,p.
\]

Here, $u^{(D)}_{j}\in H_{k}$ are approximate principal directions
in RKHS associated with the dictionary $D=\{d_{1}\cdots,d_{n}\}$,
and they approximate the true KPCA eigenfunctions that would be obtained
from the full Gram matrix on $X$.
\end{lyxalgorithm}

\begin{defn}
\cite{pmlr-vR5-ouimet05a} With the above conditions, for any new
point $x\in X$, its greedy KPCA projection/embedding onto the approximate
subspace is 
\[
f^{(D)}_{j}(x)=\langle u^{(D)}_{j},g_{x}\rangle=\sum^{n}_{i=1}\left(v_{j}\right)_{i}\frac{k(d_{i},x)}{\sqrt{k\left(x,x\right)}},\qquad for\quad j=1,\cdots,p.
\]
\end{defn}

\begin{rem}
In our framework in earlier sections, the dictionary $D$ spans a
subspace 
\[
H_{D}=\text{{span}}\{k_{d_{i}}:d_{i}\in D\}\subseteq H_{k}.
\]
 The KPCA components are the leading eigenfunctions of the empirical
covariance operator restricted to $H_{D}$. The greedy distance criterion
corresponds to selecting $x_{n}$ in order that the projection operator
onto $H_{D}$ leaves the largest residual in RKHS.
\end{rem}

We connect our theory to greedy KPCA in \cite{pmlr-vR5-ouimet05a}
which is a subset selection method for approximate eigenfunction estimation,
where dictionary points are chosen to maximize unexplained variance
in feature space. Then classical kernel PCA is applied in the reduced
dictionary span. Please see \cite{JoSo07a,Jorgensen2022InfinitedimensionalST}
for KPCA details.
\begin{rem}[Operator-level connection to PCA]
 In classical PCA in operator form, we let be a covariance operator
on $H$ and PCA yields projections:
\[
P_{n}=|g_{n}\rangle\langle g_{n}|
\]
where, $g_{n}$ are eigenvectors of $G$ \cite{JoSo07a,Jorgensen2022InfinitedimensionalST}.
In \prettyref{sec:3}, replacing static PCA with dynamic telescoping
flow
\[
T_{n}=(1-\lambda_{1}P_{1})\cdots(1-\lambda_{n}P_{n})
\]
with the identity 
\[
\left\Vert f\right\Vert ^{2}-\left\Vert T_{N}f\right\Vert ^{2}=\sum^{N}_{n=1}\lambda_{n}\left(2-\lambda_{n}\right)\left\Vert P_{n}T_{n-1}f\right\Vert ^{2}
\]
Each step removes some energy in direction $P_{n}$, the removal is
controlled and monotone, and measured exactly. Also, no eigendecomposition
is necessary.
\end{rem}

Also, in \prettyref{sec:3},
\[
g_{n}:=\frac{k_{x_{n}}}{\left\Vert k_{x_{n}}\right\Vert }
\]
updates occur in feature space, and no covariance operator is formed.
The update

\begin{equation}
f_{n}=f_{n-1}+\lambda_{n}\left(y_{n}-f_{n-1}\left(x_{n}\right)\right)\frac{k_{x_{n}}}{k\left(x_{n},x_{n}\right)},\qquad n\ge1.\label{eq:3-3-1}
\end{equation}
is equivalent to incremental KPCA

The classical KPCA lacks exact telescoping energy balance as it only
gives asymptotics or expection bounds usually. But by \prettyref{thm:3-1}
our new framework allows telescoping energy balance.

In classical PCA framework, where $y_{n}=0$, $P_{n}$ is chosen as
leading empirical variance direction and $\lambda_{n}=1$. Thus, $T_{n}=\prod^{n}_{k=1}(I-P_{k})$
which is deflation PCA. Our results provide convergence conditions,
residual energy decomposition and stability under noise by \prettyref{prop:3-2}. 

\prettyref{sec:3}, ties into exact greedy energy decomposition resulting
in the following theorem.
\begin{thm}
(Exact Greedy Energy Decomposition) Let $H_{k}$ be RKHS with kernel
$k$. Let $(x_{n})_{n\geq1}\subset X$ and let , $0<\lambda_{n}<2$.
Define the rank-one projections
\[
P_{n}f=\frac{f\left(x_{n}\right)}{k\left(x_{n},x_{n}\right)}k_{x_{n}},
\]
and the greedy residual operators
\[
T_{n}=\prod^{n}_{j=1}(I-\lambda_{j}P_{j}),\quad T_{0}=I.
\]

Then for every $f\in H_{k}$ and every $N\geq1,$ the telescoping
series\eqref{eq:3-2-1} follows
\begin{align}
\left\Vert f\right\Vert ^{2}-\left\Vert T_{N}f\right\Vert ^{2} & =\sum^{N}_{n=1}\lambda_{n}\left(2-\lambda_{n}\right)\left\Vert P_{n}T_{n-1}f\right\Vert ^{2}\nonumber \\
 & =\sum^{N}_{n=1}\lambda_{n}\left(2-\lambda_{n}\right)\frac{\left|\left(T_{n-1}f\right)\left(x_{n}\right)\right|^{2}}{k\left(x_{n},x_{n}\right)}.\label{eq:3-2-1}
\end{align}
\end{thm}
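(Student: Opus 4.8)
The plan is to recognize this theorem as the RKHS specialization of the telescoping identity of \prettyref{thm:2-4}, followed by the reproducing property; in fact the two displayed equalities are precisely \eqref{eq:3-2} read off at an arbitrary $f\in H_k$, so the proof is essentially a pointer. Here is the order in which I would carry it out.

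First I would check that the hypotheses of \prettyref{thm:2-4} hold verbatim. Since $k(x_n,x_n)>0$, the normalized section $g_n=k_{x_n}/\sqrt{k(x_n,x_n)}$ is a unit vector, so $P_n=|g_n\rangle\langle g_n|$ is an orthogonal projection acting by $P_n h=\langle g_n,h\rangle g_n$; with $0<\lambda_n<2$ and $A_n:=I-\lambda_n P_n$, \prettyref{exa:2-3} records $D_{A_n}^{2}=\lambda_n(2-\lambda_n)P_n$. Applying \eqref{eq:2-2} then gives, for every $f\in H_k$ and $N\ge1$,
\[
\left\Vert f\right\Vert^{2}-\left\Vert T_{N}f\right\Vert^{2}=\sum_{n=1}^{N}\lambda_{n}(2-\lambda_{n})\left\Vert P_{n}T_{n-1}f\right\Vert^{2},
\]
which is the first equality in \eqref{eq:3-2-1}.

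Next I would convert each projection energy into a point evaluation. For any $h\in H_k$, since $\left\Vert g_n\right\Vert=1$,
\[
\left\Vert P_{n}h\right\Vert^{2}=\left|\langle g_{n},h\rangle\right|^{2}=\frac{\left|\langle k_{x_{n}},h\rangle\right|^{2}}{k(x_{n},x_{n})}=\frac{\left|h(x_{n})\right|^{2}}{k(x_{n},x_{n})},
\]
using the reproducing property \eqref{eq:3-1} in the last step. Substituting $h=T_{n-1}f$ into the first equality yields the second equality in \eqref{eq:3-2-1}.

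There is no genuine obstacle here: the content is entirely bookkeeping. The only points to watch are that the product ordering implicit in $T_n=\prod_{j=1}^{n}(I-\lambda_j P_j)$ agrees with the convention $T_n=A_nA_{n-1}\cdots A_1$ under which \prettyref{thm:2-4} and \eqref{eq:3-2} were established, and that the per-step dissipation is exactly the rank-one quantity $\lambda_n(2-\lambda_n)P_n$ just recorded. Once these conventions are matched the result is a direct substitution, and no new estimate is required.
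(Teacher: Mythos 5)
Your proposal is correct and follows essentially the same route as the paper: the underlying mechanism in both is the per-step defect identity $\|(I-\lambda_n P_n)g\|^2=\|g\|^2-\lambda_n(2-\lambda_n)\|P_n g\|^2$ for the rank-one orthogonal projection $P_n$, telescoped over $n$, with the reproducing property converting $\|P_n T_{n-1}f\|^2$ into $|(T_{n-1}f)(x_n)|^2/k(x_n,x_n)$. The only difference is packaging: you cite \prettyref{thm:2-4} and \eqref{eq:3-2} directly (which is how the paper itself obtained this identity in \prettyref{sec:3}), whereas the paper's proof in \prettyref{sec:7} re-derives the one-step expansion in place; your attention to the product ordering $T_n=A_n\cdots A_1$ and to the unit-vector normalization of $g_n$ is sound.
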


\begin{proof}
Each $P_{x_{n}}$ is an orthogonal projection, so $P_{x_{n}}=P^{*}_{x_{n}}=P^{2}_{x_{n}}$.
For any $g\in H_{k}$, 
\[
||(I-\lambda_{n}\LyXZeroWidthSpace P_{x_{n}})g||^{2}=||g||^{2}-2\lambda_{n}\LyXZeroWidthSpace||P_{x_{n}}\LyXZeroWidthSpace\LyXZeroWidthSpace g||^{2}+\lambda^{2}_{n}\LyXZeroWidthSpace||P_{x_{n}}g||^{2}=||g||^{2}-\lambda_{n}(2-\lambda_{n})||P_{x_{n}}g||^{2}.
\]
Letting $g=T_{n-1}f$ and summing from $n=1$ to $N$ yields a telescoping
series \eqref{eq:3-2-1}.
\end{proof}

This identity decomposes the total variance of $f$ into the residual
variance $\left\Vert T_{N}f\right\Vert ^{2}$ and the cumulative variance
explained at each greedy step. Unlike classical (kernel) PCA, this
decomposition holds exactly at every finite iteration and does not
rely on orthogonality or spectral assumptions.

The following theorem results from \eqref{thm:2-4} and\eqref{thm:5.1}.
A central advantage of the greedy formulation is that convergence
can be characterized purely through the step-size schedule from \eqref{thm:2-4}.
\begin{thm}
(Strong Convergence of Greedy KPCA) Let the step sizes satisfy \eqref{eq:2-3}

\begin{equation}
\sum^{\infty}_{n=1}\frac{\lambda_{n}}{2-\lambda_{n}}<\infty.\label{eq:2-3-1}
\end{equation}
Then the operator sequence $(T_{n})$ converges strongly to a bounded
operator$T_{\infty}$. In particular, for every $f\in H_{k}$,
\[
T_{n}f\to T_{\infty f}
\]
 in norm, and the residual energy $\left\Vert T_{n}f\right\Vert ^{2}$
decreases monotonically to $\left\Vert T_{\infty}f\right\Vert ^{2}$.
\end{thm}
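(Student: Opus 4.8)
The plan is to deduce this statement directly from \prettyref{thm:2-4}, since the greedy residual operators here are precisely the $\lambda_n P_n$ products of that theorem, now built from the rank-one projections $P_n = |g_n\rangle\langle g_n|$ onto the normalized kernel sections $g_n = k_{x_n}/\|k_{x_n}\|$. So the proof is essentially a translation of the abstract result into the kernel language of \prettyref{sec:3}.

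First I would check that the hypotheses of \prettyref{thm:2-4} hold in the present RKHS setting. Each $P_n$ is a genuine orthogonal projection: it is self-adjoint and idempotent because $\|g_n\| = 1$, which uses the standing assumption $k(x_n,x_n) > 0$. Since also $0 < \lambda_n < 2$, each $A_n := I - \lambda_n P_n$ is a contraction on $H_k$ with $D_{A_n}^2 = \lambda_n(2-\lambda_n)P_n$, exactly as in \prettyref{exa:2-3}. With the summability condition \eqref{eq:2-3-1} in force, \prettyref{thm:2-4} then provides a bounded operator $T_\infty \in B(H_k)$ with $T_n \xrightarrow{s} T_\infty$, i.e.\ $T_n f \to T_\infty f$ in the norm of $H_k$ for every $f \in H_k$. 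This is the first assertion.

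For the monotone decay I would invoke the telescoping identity \eqref{eq:2-2} (equivalently \eqref{eq:3-2}) specialized to these projections:
\[
\|f\|^{2} - \|T_N f\|^{2} = \sum_{n=1}^{N}\lambda_n\bigl(2-\lambda_n\bigr)\,\|P_n T_{n-1}f\|^{2}.
\]
Since $0 < \lambda_n < 2$, every summand is nonnegative, so the right-hand side is nondecreasing in $N$; hence $\|T_N f\|^{2}$ is nonincreasing in $N$. Continuity of the norm together with the strong convergence $T_N f \to T_\infty f$ gives $\|T_N f\| \to \|T_\infty f\|$, hence $\|T_N f\|^{2} \downarrow \|T_\infty f\|^{2}$, which is the last assertion. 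Letting $N \to \infty$ in the displayed identity also yields $\|T_\infty f\|^{2} = \|f\|^{2} - \sum_{n\ge1}\lambda_n(2-\lambda_n)\|P_n T_{n-1}f\|^{2}$, identifying the limiting residual energy as the total undissipated energy.

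I do not expect any real obstacle here: the theorem is a direct specialization of \prettyref{thm:2-4}, and the only point that genuinely requires verification is that the point-evaluation operators $P_n$ are bona fide orthogonal projections, which is where $k(x_n,x_n) > 0$ is used. The remaining work is the bookkeeping of the telescoping identity already recorded in \prettyref{sec:2} and \prettyref{sec:3}.
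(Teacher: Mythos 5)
Your proposal is correct and follows essentially the same route as the paper: the paper's own proof is just a condensed inline reprise of the argument of \prettyref{thm:2-4} (the one-step defect identity giving $\|T_{n-1}f-T_nf\|^{2}=\tfrac{\lambda_n}{2-\lambda_n}\bigl(\|T_{n-1}f\|^{2}-\|T_nf\|^{2}\bigr)$, then summability plus Cauchy--Schwarz to get a Cauchy sequence), whereas you invoke \prettyref{thm:2-4} directly after checking that the rank-one operators $P_n$ are genuine orthogonal projections. Your explicit verification of the hypotheses and the norm-continuity step for $\|T_Nf\|^{2}\downarrow\|T_\infty f\|^{2}$ are fine and, if anything, slightly more complete than the paper's sketch.
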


\begin{proof}
Each factor $A_{n}=I-\lambda_{n}P_{x_{n}}$ is a contraction. Further
\[
||T_{n-1}f-T_{n}f||^{2}=\lambda^{2}_{n}||P_{x_{n}}T_{n-1}||^{2}\leq\frac{\lambda_{n}}{2-\lambda_{n}}(\left\Vert T_{n-1}f\right\Vert ^{2}-\left\Vert T_{n}f\right\Vert ^{2}),
\]
where the inequality follows from \eqref{thm:5.1}. The summability
assumption implies $(T_{n}f)$ is Cauchy for each $f$ , hence convergent
in norm. Uniform boundedness yields strong convergence of $T_{n}$
to a bounded limit $T_{\infty}$.
\end{proof}

This provides a direct analogue of full variance capture in classical
PCA, achieved without eigendecomposition. 

The following theorem is the direct result of \prettyref{prop:3-2}
\begin{thm}
(Noise Stability and Bias-Variance Decomposition) Suppose observations
were corrupted by noise added as in above \prettyref{prop:3-2},
\[
y_{n}=f^{\ast}\left(x_{n}\right)+\varepsilon_{n},
\]
with \prettyref{eq:3-5}
\[
\mathbb{E}\left(\varepsilon_{n}\mid x_{n}\right)=0,\qquad\mathbb{E}\left(\left|\varepsilon_{n}\right|^{2}\mid x_{n}\right)\le\sigma^{2}
\]
for some $\sigma\ge0$, and we assume that the noises $\left\{ \varepsilon_{n}\right\} $
are independent across $n$. Assume further that 
\[
k\left(x_{n},x_{n}\right)\ge\kappa,\qquad\sum^{\infty}_{n=1}\lambda^{2}_{n}<\infty.
\]

Then the greedy KPCA iterations satisfy the following
\[
\mathbb{E}\left\Vert f^{*}-f_{N}\right\Vert ^{2}\le\left\Vert f^{\ast}\right\Vert ^{2}-\sum^{N}_{n=1}\lambda_{n}(2-\lambda)\mathbb{\mathbb{E}}\left\Vert P_{x_{n}}e_{n-1}\right\Vert ^{2}+\sum^{N}_{n=1}\lambda^{2}_{n}\mathbb{E}\left(\frac{\left|\varepsilon_{n}\right|^{2}}{k\left(x_{n},x_{n}\right)}\right),
\]

and the noise added is uniformly bounded by 
\[
\frac{\sigma^{2}}{\kappa}\sum^{N}_{n=1}\lambda^{2}_{n}<\infty.
\]
\end{thm}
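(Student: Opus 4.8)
The statement is a direct transcription of \prettyref{prop:3-2} and \prettyref{cor:3-3} into the greedy KPCA notation, so the plan is to reduce to those results rather than to redo the martingale computation. First I would observe that the rank-one projections $P_{x_{n}}f = \frac{f(x_{n})}{k(x_{n},x_{n})}k_{x_{n}}$ appearing here are exactly the projections $P_{n}=\left|g_{n}\right\rangle\left\langle g_{n}\right|$ of \prettyref{sec:3}, and that the greedy KPCA iteration with data $y_{n}=f^{\ast}(x_{n})+\varepsilon_{n}$ is precisely the relaxed update \eqref{eq:3-3}. Hence, with $e_{n}=f^{\ast}-f_{n}$, the noise hypotheses \eqref{eq:3-5} and the independence assumption are exactly those of \prettyref{prop:3-2}.

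The first displayed bound then follows: \prettyref{prop:3-2} gives the exact identity \eqref{eq:3-6},
\[
\mathbb{E}\left\Vert e_{N}\right\Vert ^{2}=\left\Vert f^{\ast}\right\Vert ^{2}-\sum^{N}_{n=1}\lambda_{n}(2-\lambda_{n})\mathbb{E}\left\Vert P_{x_{n}}e_{n-1}\right\Vert ^{2}+\sum^{N}_{n=1}\lambda^{2}_{n}\mathbb{E}\!\left(\frac{\left|\varepsilon_{n}\right|^{2}}{k(x_{n},x_{n})}\right),
\]
and the claimed inequality is just this identity (indeed with equality). For the second assertion I would invoke the estimate used in the proof of \prettyref{cor:3-3}: conditioning on $x_{n}$, the bound $\mathbb{E}(|\varepsilon_{n}|^{2}\mid x_{n})\le\sigma^{2}$ together with $k(x_{n},x_{n})\ge\kappa$ yields
\[
\mathbb{E}\!\left(\frac{\left|\varepsilon_{n}\right|^{2}}{k(x_{n},x_{n})}\right)\le\frac{\sigma^{2}}{\kappa},
\]
so that $\sum^{N}_{n=1}\lambda^{2}_{n}\mathbb{E}\!\left(\frac{|\varepsilon_{n}|^{2}}{k(x_{n},x_{n})}\right)\le\frac{\sigma^{2}}{\kappa}\sum^{N}_{n=1}\lambda^{2}_{n}$, and this is finite by the standing hypothesis $\sum_{n}\lambda^{2}_{n}<\infty$.

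There is no real obstacle here, since the theorem is a corollary of material already established; the only point requiring care is the bookkeeping to confirm that the greedy KPCA update genuinely coincides with \eqref{eq:3-3} (same step sizes, same projections, same sign conventions for $e_{n}$ versus $f^{\ast}-f_{n}$), after which the two cited results apply verbatim. One could also remark that dropping the nonnegative dissipative middle term in \eqref{eq:3-6} recovers the cruder uniform bound $\mathbb{E}\left\Vert e_{N}\right\Vert ^{2}\le\left\Vert f^{\ast}\right\Vert ^{2}+\frac{\sigma^{2}}{\kappa}\sum_{n}\lambda^{2}_{n}$, exhibiting the bias--variance split explicitly.
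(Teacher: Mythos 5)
Your proposal is correct and matches the paper's own treatment: the paper itself presents this theorem as a direct consequence of \prettyref{prop:3-2}, and its proof merely re-sketches that same decomposition (deterministic plus noise component, vanishing cross terms, variance term $\lambda_{n}^{2}\left|\varepsilon_{n}\right|^{2}/k\left(x_{n},x_{n}\right)$) together with the $\sigma^{2}/\kappa$ bound from \prettyref{cor:3-3}. Your reduction to the exact identity \eqref{eq:3-6} and the estimate $\mathbb{E}\left(\left|\varepsilon_{n}\right|^{2}/k\left(x_{n},x_{n}\right)\right)\le\sigma^{2}/\kappa$ is exactly that argument, stated with explicit citations.
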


\begin{proof}
Decompose each update into a deterministic component driven by $f^{*}$
and a stochastic component drive by $\epsilon_{n}.$ The orthogonality
of $P_{x_{n}}$and the zero-mean assumption eliminate cross terms
in expectation. Now, the variance term follows from 
\[
\left\Vert \lambda_{n}P_{x_{n}}\epsilon_{n}\right\Vert ^{2}=\lambda^{2}_{n}\frac{\left|\varepsilon_{n}\right|^{2}}{k\left(x_{n},x_{n}\right)},
\]
and summability follows from the assumptions stated.
\end{proof}

This provides a transparent and quantitative criterion for robustness,
in contrast to classical kernel PCA where noise is typically handled
only indirectly through truncation or regularization.

\subsection*{Conclusion:}

Greedy Kernel PCA replaces global spectral decomposition by a contraction-driven
operator flow that incrementally removes variance along data-adaptive
directions. The merged framework of earlier sections provides an (1)
exact finite-step energy decomposition, (2) strong operator convergence
under simple step-size conditions, and (3) explicit bias--variance
tradeoffs and stability guarantees under noise.

Together, these results establish greedy kernel PCA as a principled,
online-capable alternative to classical kernel PCA, particularly well
suited for large-scale, streaming, and noise-aware learning problems. 

For various applications, please see \cite{1699563,doi:10.1137/23M1587956,4353576,2009SPIE.7495E..30L,math12132111}.

\subsection*{Acknowledgements. }

The first named author wishes to acknowledge many useful discussions
with many colleagues, including Daniel Alpay, Chad Berner, Dorin Dutkay,
John E. Herr, Halyun Jeong, and Eric S. Weber.

\bibliographystyle{amsalpha}
\bibliography{ref}

\end{document}